\documentclass{amsart}

\usepackage{The_Resolving_Completion_of_an_Exact_Category}

\title{The resolving completion of an exact category}
\date{\today}

\keywords{resolving subcategory, left heart, t-structure, effaceable functor, Ext-kernel, derived category, derived equivalence, exact category}

\subjclass[2020]{18G10, 18G80, 16E35, 18E10}

\author[M.~Lawson]{Marianne Lawson}
\address{Marianne Lawson,
Department of Mathematics,
University of Hamburg,
Bundesstr. 55,
20146 Hamburg,
Germany}
\email{marianne.lawson@uni-hamburg.de}

\author[J.~C.~Letz]{Janina C. Letz}
\address{Janina~C.~Letz,
Institute of Mathematics,
Paderborn University,
Warburger Str. 100,
33098 Paderborn,
Germany
}
\email{jletz@math.upb.de}

\author[J.~Sauter]{Julia Sauter}
\address{Julia Sauter,
Faculty of Mathematics,
Bielefeld University,
PO Box 100 131,
33501 Bielefeld,
Germany}
\email{jsauter@math.uni-bielefeld.de}

\begin{document}

\begin{abstract}
For an exact category we provide two constructions of an ambient category in which the initial category is resolving: In the derived category and in the Gabriel--Quillen embedding. For the first construction we describe a pre-aisle and its right orthogonal using different acyclicty conditions. We provide necessary and sufficient conditions when this pair is a t-structure. 
\end{abstract}

\maketitle

\setcounter{tocdepth}{1}
\tableofcontents

\section*{Introduction}

Exact categories appear in various areas of mathematics; for
example, in representation theory, algebraic geometry, algebraic topology or functional analysis. It is a common objective to find derived equivalences between such categories; meaning their derived categories are triangle equivalent. For example, this is at the heart of tilting theory \cite[Chapter~III]{Happel:1988},\cite{Rickard:1989},\cite[Chapter~5]{AngeleriHuegel/Happel/Krause:2007}. Many of these triangle equivalences are based on the derived equivalence obtained for a resolving subcategory by \cite{Henrard/vanRoosmalen:2020c}. 

For an exact category $\cat{E}$ we describe the \emph{resolving completion} $\cat{R}$ of $\cat{E}$; that is $\cat{E}$ can be identified with a resolving subcategory of $\cat{R}$ and it is universal in the following sense: Whenever $\cat{E} \subseteq \cat{F}$ is resolving, then $\mathcal{F}$ can be identified with a resolving subcategory of $\mathcal{R}$. We give two constructions for ambient categories of $\cat{E}$:
\begin{itemize}
\item We construct $\lheart{\cat{E}}$ as a full subcategory of $\dcat{\cat{E}}$, generalizing constructions of \cite{Huber:1995,Schneiders:1999,Henrard/Kvamme/vanRoosmalen/Wegner:2023}. It consists of complexes which are $\cat{E}$-acyclic in positive degrees and satisfy a weaker acyclicty condition, called left $\ext{\cat{E}}$-acyclicity, in negative degrees. This notion is inspired by \cite{Rump:2021}. 

\item We construct $\Rclosure{\cat{E}}$ as a fully exact subcategory of the localization of the functor category $\Mod{\cat{E}}$ at the category of locally effaceable functors $\Eff{\cat{E}}$. That is, we extend $\cat{E}$ in its Gabriel--Quillen embedding. This is inspired by \cite{Rump:2020,Henrard/Kvamme/vanRoosmalen/Wegner:2023}.
\end{itemize}

For these construction we show in \cref{LH_completion,Rcat_completion,Rcat_equals_LH} the following result:

\begin{introthm}
Let $\cat{E}$ be a weakly idempotent complete small exact category. Then both $\lheart{\cat{E}}$ and $\Rclosure{\cat{E}}$ are resolving completions of $\cat{E}$ and there is an equivalence $\lheart{\cat{E}} \to \Rclosure{\cat{E}}$. 
\end{introthm}

Similar to the constructions of $\lheart{\cat{E}}$ and $\Rclosure{\cat{E}}$ we also construct finite resolving completions $\lheart[b]{\cat{E}}$ and $\Rclosure[b]{\cat{E}}$, which yield equivalences of the bounded derived categories.

In the construction $\lheart{\cat{E}}$ we obtain a pair $(\cat{U},\cat{V})$, which is close to being a t-structure. In fact, when $\cat{E}$ has kernels, this is the t-structure described in \cite{Huber:1995,Schneiders:1999}. However, we do not expect the pair to be a t-structure in general, as the bounded derived category of an exact category need not support any t-structure by \cite{Neeman:2022}. Moreover, using this construction and its dual construction, we show in \cref{maxl_tpair} the following result:

\begin{introthm}
Let $\cat{E}$ be a weakly idempotent complete exact category. There exists a pair $(\cat{U},\cat{V})$ of full subcategories of $\dbcat{\cat{E}}$ such that 
\begin{enumerate}
\item $\cat{U}$ and $\cat{V}$ are closed under direct summands, isomorphisms and extensions; 
\item $\susp \cat{U} \subseteq \cat{U}$ and $\cat{V} \subseteq \susp \cat{V}$; and
\item $(\susp \cat{U})^{\perp} = \cat{V}$ and $\susp \cat{U} = {}^{\perp} \cat{V}$.
\end{enumerate}
\end{introthm}

In fact, we construct two pairs with these properties which in general are not equal. Each pair $(\cat{U},\cat{V})$ yields a \emph{maximally non-negative} extension $\cat{H} = \cat{U} \cap \cat{V}$ of $\cat{E}$; this means $\Hom_{\dbcat{\cat{E}}}(\cat{H},\susp^{<0} \cat{H}) = 0$ and $\cat{H}$ is maximal with the property.

The notion of left Ext-acyclicty is crucial in the constructions of $\lheart{\cat{E}}$ and $\Rclosure{\cat{E}}$. This is not surprising as it is preserved in a resolving extension. Left Ext-acyclicty encompasses two notions, namely left $\cat{E}$-acyclicity and Hom-acyclicity. The former is a slight generalization of $\cat{E}$-acyclicity, while the latter encodes acyclicity after application of the Yoneda functor. 

In order to obtain equivalences between the unbounded derived categories, we construct the \emph{$n$-resolving completion} of $\cat{E}$ for $n \geq 1$. 
Moreover, this construction preserves finiteness of the global dimension.
The $1$-resolving completion is the full subcategory of $\dcat{\cat{E}}$ consisting of two-term complexes represented by a monomorphism; this category has been studied by \cite{Schneiders:1999, Henrard/Kvamme/vanRoosmalen/Wegner:2023}. 

\begin{ack}
The authors were partly funded by the Deutsche Forschungsgemeinschaft (DFG, German Research Foundation): The first author by Project-ID 507660524, and the second and third by Project-ID 491392403 (TRR 358). The authors would like to thank Sven-Ake Wegner for insightful discussions and contributions.
\end{ack}

\section{Acyclicity and resolving subcategories}

In an abelian category there is a canonical notion of acyclicity of a sequence $L \xrightarrow{f} M \xrightarrow{g} N$; that is when $0 \to \ker(g) \to M \to \coker(f) \to 0$ is a short exact sequence. There are various equivalent characterizations of acyclicity in an abelian category. However, the characterizations can yield different notions in an exact category. The most common notion was introduced in \cite[Section~1]{Neeman:1990}. Our interest lies in a different notion of acyclicity that is motivated by the Ext-kernels introduced in \cite[Definition~3]{Rump:2021}. The latter notion is closely connected to resolving subcategories. 

\subsection{Exact categories and exact functors}

Let $\cat{A}$ be an additive category. A pair of morphisms $(i,p)$ in $\cat{A}$ is called a \emph{kernel-cokernel pair} if $\ker(p) = i$ and $\coker(i) = p$. An \emph{exact structure} is a collection of kernel-cokernel pairs satisfying a certain set of axioms; this was introduced in \cite{Quillen:1973}. An \emph{exact category} $\cat{E}$ is an additive category equipped with an exact structure. Generally, we use $\cat{E}$ to refer to both the underlying additive category and the exact structure. We refer to those kernel-cokernel pairs in the exact structure of $\cat{E}$ as \emph{$\cat{E}$-conflations}. If $(i,p)$ is an $\cat{E}$-conflation, we call $i$ an \emph{$\cat{E}$-inflation} and $p$ an \emph{$\cat{E}$-deflation}. We represent $\cat{E}$-inflations by $\infl[\cat{E}]$ and $\cat{E}$-deflations by $\defl[\cat{E}]$. In other sources inflations and deflations are called admissible monomorphisms and admissible epimorphisms, respectively. For an overview of exact categories see \cite{Buehler:2010}. 

\subsection{Acyclicity}

We recall different notions of acyclicity in an exact category; the first was introduced by \cite[Section~1]{Neeman:1990}.

\begin{definition}
Let $\cat{E}$ be an exact category. A sequence $L \xrightarrow{f} M \xrightarrow{g} N$ with $gf = 0$ is
\begin{enumerate}
\item \emph{$\cat{E}$-acyclic (at $M$)} if there exists a factorization
\begin{equation*}
\begin{tikzcd}[column sep=small]
L \ar[rr,"f"] \ar[dr,defl=\cat{E}] && M \ar[rr,"g"] \ar[dr,defl=\cat{E}] && N \\
& K \ar[ur,infl=\cat{E}] && C \ar[ur,infl=\cat{E}]
\end{tikzcd}
\end{equation*}
where $L \to K$ and $M \to C$ are $\cat{E}$-deflations, $K \to M$ and $C \to M$ are $\cat{E}$-inflations, such that $K \to M \to C$ is an $\cat{E}$-conflation; 
\item \emph{left $\ext{\cat{E}}$-acyclic (at $M$)} if for any $a \colon A \to M$ with $ga = 0$ there exists a commutative diagram
\begin{equation*}
\begin{tikzcd}
B \ar[r,dashed,defl=\cat{E}] \ar[d,dashed] & A \ar[d,"a"] \ar[dr,"0"] \\
L \ar[r,"f"] & M \ar[r,"g"] & N
\end{tikzcd}
\end{equation*}
where $B \to A$ is an $\cat{E}$-deflation;
\item \emph{split acyclic (in $M$)} if it is $\cat{E}^\oplus$-acyclic (in $M$); and
\item \emph{left Hom-acyclic (in $M$)} if it is left $\ext{\cat{E}^{\oplus}}$-acyclic (in $M$).
\end{enumerate}
Here, $\cat{E}^\oplus$ is the exact category with the same underlying additive category as $\cat{E}$ equipped the split exact structure.
\end{definition}

It was shown in \cite[Proposition~9]{Rump:2021} that for an abelian category $\cat{A}$, viewed as an exact category equipped with the abelian exact structure, a sequence $L \xrightarrow{f} M \xrightarrow{g} N$ is $\cat{A}$-acyclic if and only if it is left $\ext{\cat{A}}$-acyclic if and only if $0 \to \ker(g) \to M \to \coker(f) \to 0$ is exact. 

\begin{remark}
The notion of left $\ext{\cat{E}}$-acyclicity is inspired by the Ext-kernels introduced in \cite[Definition~3]{Rump:2021}. In fact, a morphism $L \to M$ is an \emph{$\ext{\cat{E}}$-kernel} of $M \to N$, if and only if $L \to M \to N$ is left $\ext{\cat{E}}$-acyclic. Similarly, a morphism $L \to M$ is a \emph{weak kernel} of $M \to N$, if and only if $L \to M \to N$ is left Hom-acyclic. In particular, the notion of left Hom-acyclicity is closely connected to $n$-exact sequences; see \cite[Definition~2.4]{Jasso:2016}. In \cite[Definition~5]{Rump:2021} a complex over $\cat{E}$ is called left exact if it is left $\ext{\cat{E}}$-acyclic in each degree.
\end{remark}

We immediately observe the following implications from the definition
\begin{equation} \label{connection_diff_acyclic}
\begin{tikzcd}
\text{split acyclic} \ar[r,Rightarrow] \ar[d,Rightarrow] & \text{$\cat{E}$-acyclic} \ar[d,Rightarrow] \\
\text{left Hom-acyclic} \ar[r,Rightarrow] & \text{left $\ext{\cat{E}}$-acyclic} \nospacepunct{.}
\end{tikzcd}
\end{equation}

We collect some properties of left $\ext{\cat{E}}$-acyclic sequences. 

\begin{lemma} \label{mono_ext_acyc_hom_acyc}
Let $\cat{E}$ be an exact category and let $L \xrightarrow{f} M \xrightarrow{g} N$ be sequence in $\cat{E}$ with $gf = 0$. If $f$ is a monomorphism, then the sequence $L \xrightarrow{f} M \xrightarrow{g} N$ is left $\ext{\cat{E}}$-acyclic if and only if it is left Hom-acyclic.

In particular, a sequence $0 \to M \xrightarrow{g} N$ is left $\ext{\cat{E}}$-acyclic if and only if $g$ is a monomorphism.
\end{lemma}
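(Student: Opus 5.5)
One implication is already available: by \eqref{connection_diff_acyclic}, left Hom-acyclic implies left $\ext{\cat{E}}$-acyclic. So the plan is to prove the converse under the hypothesis that $f$ is a monomorphism, and then specialise to $L = 0$.

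For the converse, assume the sequence is left $\ext{\cat{E}}$-acyclic and let $a \colon A \to M$ satisfy $ga = 0$. By hypothesis there is an $\cat{E}$-deflation $p \colon B \defl[\cat{E}] A$ and a morphism $b \colon B \to L$ with $fb = ap$. Left Hom-acyclicity asks instead for a split epimorphism $B' \to A$ with a compatible map to $L$. Taking $B' = A$ with the identity, this amounts to factoring $a$ through $f$, i.e.\ finding $c \colon A \to L$ with $fc = a$.

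The natural candidate for $c$ is obtained by descending $b$ along the cokernel $p$. Since $p$ is a deflation, it is the cokernel of its kernel $k \colon K \infl[\cat{E}] B$, so it suffices to show $bk = 0$. Here the monomorphism hypothesis is used: $fbk = apk = 0$ because $pk = 0$, and $f$ is monic, so $bk = 0$. Hence $b = cp$ for a unique $c \colon A \to L$. Then $fcp = fb = ap$, and since $p$, being a cokernel, is an epimorphism, we get $fc = a$. This is the required diagram with $B = A$ and the identity as split deflation.

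The step needing the most care is this descent of $b$ along $p$, which uses both that $f$ is monic and that deflations are cokernels (hence epimorphisms); the rest is bookkeeping.

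For the special case, take $L = 0$, so $f$ is trivially a monomorphism. By the first part, left $\ext{\cat{E}}$-acyclicity at $M$ is equivalent to left Hom-acyclicity, i.e.\ to: every $a$ with $ga = 0$ factors through $0$, i.e.\ $a = 0$. This is exactly the statement that $g$ is a monomorphism. To justify this, note that if $gx = gy$ then $g(x - y) = 0$, so $x = y$. Conversely, if $g$ is monic and $ga = 0$, then $a = 0$.
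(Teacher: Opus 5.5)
Your proof is correct and follows the same approach as the paper's: reduce to the forward direction via \cref{connection_diff_acyclic}, show $bk=0$ using that $f$ is monic so that $b$ descends along the deflation $p$, and then cancel the epimorphism $p$ to get $a = fc$. Your treatment of the special case $L=0$, which the paper leaves implicit, is also fine; its implicit step is that a split epimorphism $p$ with $ap=0$ forces $a=0$.
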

\begin{proof}
By \cref{connection_diff_acyclic} it is enough to show the forward direction. We assume the sequence is left $\ext{\cat{E}}$-acyclic. Let $a \colon A \to M$ be a morphism with $ga = 0$. Then there exists an $\cat{E}$-deflation $p \colon B \to A$ and a morphism $h \colon B \to L$ with $ap = fh$. Let $i \colon K \to B$ be the kernel of $p$. Then $fhi = 0$ and as $f$ is a monomorphism we have $hi = 0$. Hence there exists a morphism $v \colon A \to L$ such that $h = vp$. Hence $ap = fh = fvp$. Since $p$ is an $\cat{E}$-deflation we have $a = fv$, thus $L \xrightarrow{f} M \xrightarrow{g} N$ is left Hom-acyclic.
\end{proof}

\subsection{Exact subcategories}

We recall the terminology for subcategories in an exact category. 

Let $\cat{E}$ be an exact category. An additive subcategory $\cat{C} \subseteq \cat{E}$ is
\begin{enumerate}
\item \emph{extension-closed}, if for any $\cat{E}$-conflation $L \to M \to N$ with $L,N \in \cat{C}$ also $M \in \cat{C}$;
\item \emph{inflation-closed}, if for any $\cat{E}$-conflation $L \to M \to N$ with $L,M \in \cat{C}$ also $N \in \cat{C}$;
\item \emph{deflation-closed}, if for any $\cat{E}$-conflation $L \to M \to N$ with $M,N \in \cat{C}$ also $L \in \cat{C}$;
\item \emph{closed under summands}, if for any $M \oplus M' \in \cat{C}$ also $M,M' \in \cat{C}$; 
\item \emph{thick}, if it is extension-, inflation- and deflation-closed and closed under summands;
\item \emph{Serre}, if for any $\cat{E}$-conflation $L \to M \to N$ one has $M \in \cat{C}$ if and only if $L,M \in \cat{C}$.
\end{enumerate}
Note that a subcategory is deflation-closed if and only if it is closed under kernels of $\cat{E}$-deflations. 

If $\cat{C} \subseteq \cat{E}$ is extension-closed, then there exists an induced exact structure on $\cat{C}$ consisting of the $\cat{E}$-conflations $L \to M \to N$ with $L,M,N \in \cat{C}$. We call this the \emph{fully exact structure (coming from $\cat{E}$)} on $\cat{C}$. 

A functor $\sF \colon \cat{E} \to \cat{F}$ for exact categories $\cat{E}$ and $\cat{F}$ is
\begin{enumerate}
\item \emph{exact}, if it maps $\cat{E}$-conflations to $\cat{F}$-conflations;
\item an \emph{exact embedding}, if it is a faithful exact functor which is injective on objects; 
\item a \emph{fully exact embedding}, if it is an exact embedding that is full and, whenever $\sF(L) \to \tilde{M} \to \sF(N)$ is an $\cat{F}$-conflation with $L,N \in \cat{E}$, there exists $M \in \cat{E}$ such that $\sF(M) = \tilde{M}$ and the $\cat{F}$-conflation is the image of an $\cat{E}$-conflation.
\end{enumerate}
If $\sF$ is an exact embedding that is full, then we can identify $\cat{E}$ with its image in $\cat{F}$ and view $\cat{E}$ as an exact subcategory of $\cat{F}$. Furthermore, when $\sF$ is a fully exact embedding, then the exact structure on $\cat{E}$ consists of all $\cat{F}$-conflations with objects in $\cat{E}$.

An exact category $\cat{E}$ is an \emph{exact subcategory} of an exact category $\cat{F}$, if it is an additive subcategory and the inclusion functor is an exact embedding that is full. When the inclusion functor is a fully exact embedding, we say $\cat{E}$ is a \emph{fully exact subcategory}. In this case, the exact structure on $\cat{E}$ coincides with the fully exact structure coming from $\cat{F}$.

\subsection{Resolving subcategories}

Let $\cat{F}$ be an exact category. A fully exact subcategory $\cat{E} \subseteq \cat{F}$ is \emph{resolving in $\cat{F}$}, if it satisfies
\begin{enumerate}[label=(R\arabic*), ref=R\arabic*]
\item \label{resolving:generator} for any $M \in \cat{F}$, there exists an $\cat{F}$-deflation $L \to M$ with $L \in \cat{E}$; and
\item \label{resolving:deflationclsd} $\cat{E}$ is deflation-closed in $\cat{F}$.
\end{enumerate}
We say $\cat{E} \subseteq \cat{F}$ is \emph{semi-resolving in $\cat{F}$}, if it satisfies
\begin{enumerate}[label=(R\arabic*'), ref=R\arabic*']
\item \label{semiresolving:generator} for any $M \in \cat{F}$ there exists an epimorphism $L \to M$ with $L \in \cat{E}$; and
\item \label{semiresolving:deflationclsd} every epimorphism $M \to N$ in $\cat{F}$ with $M,N \in \cat{E}$ has a kernel which belongs to $\cat{E}$.
\end{enumerate}
If $\cat{F}$ is left abelian, then resolving and semi-resolving are equivalent. However, in general one has \cref{resolving:generator} $\implies$ \cref{semiresolving:generator} and \cref{semiresolving:deflationclsd} $\implies$ \cref{resolving:deflationclsd}.

\begin{remark}
There are many slightly different notions of resolving subcategories. Initially, resolving subcategories were introduced for subcategories of a module category \cite[(3.11)]{Auslander/Bridger:1969}. They were later generalized to abelian categories \cite[Definition~2.1]{Stovicek:2014} and exact categories \cite[Definition~2.3]{Enomoto:2017}. For left abelian categories another notion appears in \cite[Definition~2]{Rump:2020} and for deflation-exact categories in \cite[Definition~3.1]{Henrard/vanRoosmalen:2020c}; these are not generalizations of Enomoto's definition. We use the definition from \cite[Definition~3.1]{Henrard/vanRoosmalen:2020c}, and semi-resolving is the notion from \cite[Definition~2]{Rump:2020}.
\end{remark}

Let $\cat{E} \subseteq \cat{F}$ be a resolving subcategory. One can deduce from the axioms \cref{resolving:generator,resolving:deflationclsd} that for each $M \in \cat{F}$, there exists an $\cat{F}$-acyclic sequence 
\begin{equation} \label{eq:resolving_resn}
\dots \to X^{n-1} \to X^{n} \to \dots \to X^{-1} \to X^{0} \to M \to 0
\end{equation}
with $X^{i} \in \cat{E}$ for all $i \leq 0$. 

A resolving subcategory $\cat{E} \subseteq \cat{F}$ is called \emph{finitely resolving}, if for every $M \in \cat{F}$ there exists an integer $n \leq 0$ and a sequence as in \cref{eq:resolving_resn} with $X^i = 0$ for $i < n$.

For resolving subcategories the notion of Ext-acyclicity is preserved in the following sense:

\begin{lemma} \label{resolving_ext_acyc}
Let $\cat{F}$ be an exact category and $\cat{E} \subseteq \cat{F}$ a resolving subcategory. A sequence $L \xrightarrow{f} M \xrightarrow{g} N$ with $L,M,N \in \cat{E}$ is left $\ext{\cat{F}}$-acyclic if and only if it is left $\ext{\cat{E}}$-acyclic.
\end{lemma}
\begin{proof}
For the forward direction, let $a \colon A \to M$ be a morphism in $\cat{E}$ with $g a = 0$. As the sequence is left $\ext{\cat{F}}$-acyclic, we have the following commutative diagram
\[
\begin{tikzcd}[column sep=large]
B \ar[r,"p" {near start},defl=\cat{F}] \ar[d,"h"] & A \ar[d,"a"] \ar[dr,"0"] & \\
L \ar[r,"f"] & M \ar[r,"g"] & N
\end{tikzcd}
\]
where $p$ is an $\cat{F}$-deflation. By \cref{resolving:generator}, there exists an $\cat{F}$-deflation $q \colon C \to B$ with $C \in \cat{E}$. By \cref{resolving:deflationclsd}, the composition $q p$ is an $\cat{E}$-deflation and $apq = fhq$. 

For the converse, we assume the sequence $L \xrightarrow{f} M \xrightarrow{g} N$ is left $\ext{\cat{E}}$-acyclic. Let $a \colon A \to M$ be a morphism in $\cat{F}$ with $ga = 0$. By \cref{resolving:generator}, there exists an $\cat{F}$-deflation $q \colon \hat{A} \to A$ with $\hat{A} \in \cat{E}$. Then $aq$ is a morphism in $\cat{E}$ with $gaq = 0$. Hence there exists an $\cat{E}$-deflation $p \colon B \to \hat{A}$ and a morphism $h \colon B \to L$ such that $aqp = fh$ ane $qp$ is an $\cat{F}$-deflation. 
\end{proof}

As a direct consequence of this \namecref{resolving_ext_acyc}, any $\cat{F}$-acyclic sequence with objects in $\cat{E}$ is left $\ext{\cat{E}}$-acyclic. We will construct ambient categories of $\cat{E}$ in which the converse holds; see \cref{ext_acyc_in_loc_acyc}. 

\section{Ext resolutions}

The sequence in \cref{eq:resolving_resn} obtained from a resolving subcategory plays an important role in the sequel. In fact, one can think of $\cdots \to X^{-1} \to X^{-0}$ as a resolution of $M$. In view of \cref{resolving_ext_acyc} we are interested in such sequences that are left Ext-acyclic in negative degrees. 

Let $\cat{E}$ be an exact category. We denote the category of complexes over $\cat{E}$ by $\ccat{\cat{E}}$. We use cohomological notation for complexes; that is we write a complex $X$ as
\begin{equation*}
\cdots \to X^{n-1} \xrightarrow{d_X^{n-1}} X^{n} \xrightarrow{d_X^{n}} X^{n+1} \to \cdots\,.
\end{equation*}
A complex $X$ is \emph{$\cat{E}$-acyclic}, if it is $\cat{E}$-acyclic in each degree. A morphism of complexes is an \emph{$\cat{E}$-quasi-isomorphism}, if its mapping cone is $\cat{E}$-acyclic. 

\begin{definition}
Let $\cat{E}$ be an exact category. An \emph{$\ext{\cat{E}}$-resolution} is a complex $X$ concentrated in non-negative degrees that is left $\ext{\cat{E}}$-acyclic in negative degrees. We call an $\ext{\cat{E}}$-resolution $X$ \emph{bounded}, if $X^{n} = 0$ for $n \ll 0$. 
\end{definition}

In this definition we do not require that the Ext-resolution `resolves' any object, as the object resolved needs not lie in $\cat{E}$. We still use the term `resolution' as it will be a resolution of an object in the ambient categories of $\cat{E}$ constructed in \cref{sec:heart,sec:rclosure}.

\subsection{Gluing Ext-acyclic complexes}

We will show that Ext-acyclicity is compatible with the formation of the mapping cone. In particular, it behaves well in the homotopy category; see \cref{sec:dcat}.

\begin{lemma} \label{ConeExtacyclic} 
Let $\cat{E}$ be an exact category. Let $f \colon X \to Y$ be a morphism in $\ccat{\cat{E}}$. If $X$ is left $\ext{\cat{E}}$-acyclic in degree $n+1$ and $Y$ is left $\ext{\cat{E}}$-acyclic in degree $n$, then $\cone(f)$ is left $\ext{\cat{E}}$-acyclic in degree $n$.
\end{lemma}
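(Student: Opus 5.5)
Set the convention $\cone(f)^n = X^{n+1} \oplus Y^n$ with differential
\[
d^n_{\cone} = \begin{pmatrix} -d_X^{n+1} & 0 \\ f^{n+1} & d_Y^n \end{pmatrix}.
\]
We must show that the sequence $\cone(f)^{n-1} \to \cone(f)^n \to \cone(f)^{n+1}$ is left $\ext{\cat{E}}$-acyclic. The plan is a two-step lifting: first use acyclicity of $X$ at $n+1$ to handle the $X^{n+1}$-component, then use acyclicity of $Y$ at $n$ to handle the corrected $Y^n$-component.

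Take $a = (a_1, a_2)^{T} \colon A \to X^{n+1} \oplus Y^n$ with $d_{\cone}^n a = 0$. This condition reads $d_X^{n+1} a_1 = 0$ and $f^{n+1} a_1 + d_Y^n a_2 = 0$.

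\emph{Step 1.} Since $d_X^{n+1} a_1 = 0$ and $X$ is left $\ext{\cat{E}}$-acyclic in degree $n+1$, there is an $\cat{E}$-deflation $p \colon B \to A$ and $h \colon B \to X^n$ with $a_1 p = d_X^n h$.

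\emph{Step 2.} Consider $b = a_2 p - f^n h \colon B \to Y^n$ (up to the sign convention). Then
\[
d_Y^n b = d_Y^n a_2 p - d_Y^n f^n h = -f^{n+1} a_1 p - f^{n+1} d_X^n h = -2 f^{n+1} a_1 p,
\]
so the naive sign choice is wrong. With $b = a_2 p + f^n h$ instead,
\[
d_Y^n b = -f^{n+1} a_1 p + f^{n+1} d_X^n h = -f^{n+1} a_1 p + f^{n+1} a_1 p = 0,
\]
using that $f$ is a chain map. So we fix $b = a_2 p + f^n h$, which satisfies $d_Y^n b = 0$. Left $\ext{\cat{E}}$-acyclicity of $Y$ in degree $n$ then gives an $\cat{E}$-deflation $q \colon C \to B$ and $k \colon C \to Y^{n-1}$ with $bq = d_Y^{n-1} k$.

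\emph{Step 3.} The composite $pq \colon C \to A$ is an $\cat{E}$-deflation, since deflations compose. Define the lift
\[
(-hq, k)^{T} \colon C \to X^n \oplus Y^{n-1} = \cone(f)^{n-1}.
\]
We check that $d_{\cone}^{n-1} (-hq, k)^{T} = (d_X^n h q, -f^n h q + d_Y^{n-1} k)$. The first component is $a_1 pq$. The second component is $-f^n hq + a_2 pq + f^n hq = a_2 pq$. Hence $d^{n-1}_{\cone}(-hq,k)^{T} = a\,pq$, which is exactly the required commutative square, so $\cone(f)$ is left $\ext{\cat{E}}$-acyclic in degree $n$.

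The only real obstacle is bookkeeping signs so that the corrected map $b$ is a cycle and the final lift matches. This works with the convention above; for other cone conventions the same argument goes through after adjusting the signs of $h$ and $f^n h$. Note that the argument never needs $\cat{E}$ to be weakly idempotent complete or the lifts to be unique; only closure of deflations under composition is used.
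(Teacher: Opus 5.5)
Your proof is correct and is essentially the paper's own argument. It uses the same cone convention, the same first lift via left $\mathrm{Ext}$-acyclicity of $X$ in degree $n+1$, the same corrected cycle $a_2p + f^n h$ in $Y^n$, and the same final lift $(-hq,k)^{T}$ through the composite deflation $pq$ (in the final write-up, just drop the exploratory wrong-sign computation in Step 2).
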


\begin{proof}
Let $g \colon A \xrightarrow{\begin{psmallmatrix} a \\ b \end{psmallmatrix}} X^{n+1} \oplus Y^{n}$ be a morphism such that the diagram
\begin{equation*}
\begin{tikzcd}[ampersand replacement=\&,column sep=huge]
\&[-4em] \& A \ar[d,"{\begin{psmallmatrix} a \\ b \end{psmallmatrix}}"] \ar[dr,"0"] \\
\cdots \ar[r] \& X^{n} \oplus Y^{n-1} \ar[r,"{\begin{psmallmatrix} -d_X^{n} & 0 \\
f^{n} & d_Y^{n-1} \end{psmallmatrix}}" swap] \& X^{n+1} \oplus Y^{n} \ar[r,"{\begin{psmallmatrix}-d_X^{n+1} & 0 \\
f^{n+1} & d_Y^{n}\end{psmallmatrix}}" swap] \& X^{n+2}\oplus Y^{n+1} \ar[r] \&[-4em] \cdots
\end{tikzcd}
\end{equation*}
commutes. The bottom row is $\cone(f)$ in degrees $n-1$, $n$ and $n+1$. 

As $d_X^{n+1} a=0$ and $X$ left $\ext{\cat{E}}$-acyclic in degree $n+1$, there exists an $\cat{E}$-deflation $p\colon B \to A$ and a morphism $h \colon B \to X^{n}$ such that the following diagram 
commutes 
\[
\begin{tikzcd}[column sep=large]
B \arrow[r,"p" {near start},defl=\cat{E}] \arrow[d, "h"'] & A \arrow[d, "a"] \ar[dr,"0"] \\
X^{n} \arrow[r, "d_X^{n}"] & X^{n+1} \ar[r,"d_X^{n+1}"] & X^{n+2} \nospacepunct{.}
\end{tikzcd}
\]
By assumption, we have $f^{n+1} a + d_Y^{n} b = 0$. This yields
\[
0 = (f^{n+1} a + d_Y^{n} b) p = f^{n+1} a p + d_Y^{n} b p = f^{n+1} d_X^{n} h + d_Y^{n} b p = d_Y^{n} (f^{n} h + b p)\,.
\]
As $Y$ is left $\ext{\cat{E}}$-acyclic in degree $n$, we find an $\cat{E}$-deflation $q\colon C \to B$ and a morphism $g \colon C \to Y^{n-1}$ such that the following diagram commutes 
\[
\begin{tikzcd}[column sep=large]
C \arrow[r,"q" {near start},defl=\cat{E}] \arrow[d, "g"'] & B \arrow[d, "{f^n h + b p}" description] \ar[dr,"0"] \\
Y^{n-1} \arrow[r, "d_Y^{n-1}"] & Y^{n} \ar[r,"d_Y^{n}"] & Y^{n+1} \nospacepunct{.}
\end{tikzcd}
\]
Combining the identities yields the commutative square
\[
\begin{tikzcd}[ampersand replacement=\&]
C \arrow[d,"{\begin{psmallmatrix}-hq \\ g \end{psmallmatrix}}"] \arrow[r,"q" {near start},defl=\cat{E}] \& B \ar[r,"p" {near start},defl=\cat{E}] \& A \arrow[d, "{\begin{psmallmatrix} a \\ b \end{psmallmatrix}}"] \ar[dr,"0"] \&[+3em] \\
X^{n}\oplus Y^{n-1} \ar[rr,"{\begin{psmallmatrix} -d_X^{n} & 0 \\ f^{n} & d_Y^{n-1} \end{psmallmatrix}}" swap] \&\& X^{n+1} \oplus Y^{n} \ar[r,"{\begin{psmallmatrix} -d_X^{n+1} & 0 \\ f^{n+1} & d_Y^{n} \end{psmallmatrix}}" swap] \& X^{n+2} \oplus Y^{n+1}
\end{tikzcd}
\]
where the composition of the top row $pq$ is an $\cat{E}$-deflation.
\end{proof}

\begin{lemma} \label{Extacyc_htpc_retract}
Let $\cat{E}$ be an exact category. Let $f \colon X \to Y$ be a homotopy retract in $\ccat{\cat{E}}$; that is, there exists a chain map $g \colon Y \to X$ such that $gf$ is homotopic to the identity map on $X$. If $Y$ is left $\ext{\cat{E}}$-acyclic in degree $n$, then so is $X$.
\end{lemma}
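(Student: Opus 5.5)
Let $a \colon A \to X^{n}$ be a morphism with $d_X^{n} a = 0$; we want an $\cat{E}$-deflation $p \colon B \to A$ and $h \colon B \to X^{n-1}$ with $ap = d_X^{n-1} h$. The plan is to push $a$ into $Y$, use the acyclicity there, and pull the result back along $g$. The homotopy term is the only thing to deal with.

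Fix a homotopy $s$ with $s^{k} \colon X^{k} \to X^{k-1}$ and
\[
\mathrm{id}_{X^{k}} - g^{k} f^{k} = d_X^{k-1} s^{k} + s^{k+1} d_X^{k}
\]
for all $k$.

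First consider $f^{n} a \colon A \to Y^{n}$. Since $f$ is a chain map, $d_Y^{n} f^{n} a = f^{n+1} d_X^{n} a = 0$. Because $Y$ is left $\ext{\cat{E}}$-acyclic in degree $n$, there exist an $\cat{E}$-deflation $p \colon B \to A$ and a morphism $k \colon B \to Y^{n-1}$ with $f^{n} a p = d_Y^{n-1} k$.

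Next apply the homotopy identity in degree $n$ to $ap$:
\[
ap = g^{n} f^{n} a p + d_X^{n-1} s^{n} a p + s^{n+1} d_X^{n} a p .
\]
The last term vanishes because $d_X^{n} a = 0$. For the first term,
\[
g^{n} f^{n} a p = g^{n} d_Y^{n-1} k = d_X^{n-1} g^{n-1} k .
\]
Hence
\[
ap = d_X^{n-1}\bigl(g^{n-1} k + s^{n} a p\bigr),
\]
so $h := g^{n-1} k + s^{n} a p$ gives the required square together with the $\cat{E}$-deflation $p$.

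There is no real obstacle here. The only point needing care is that the homotopy term $s^{n+1} d_X^{n} a p$ vanishes because of the cycle condition. We never need to compose deflations: a single application of the acyclicity of $Y$ suffices.
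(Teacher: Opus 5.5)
Your proof is correct and is essentially the paper's argument: push $a$ forward along $f^{n}$, use the left $\ext{\cat{E}}$-acyclicity of $Y$ once to get the deflation $p$, then expand $ap$ via the homotopy identity. The term $s^{n+1}d_X^{n}ap$ vanishes, and $g^{n}d_Y^{n-1} = d_X^{n-1}g^{n-1}$ gives $ap = d_X^{n-1}(g^{n-1}k + s^{n}ap)$.
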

\begin{proof}
Let $f$ and $g$ be as in the statement, and let $h$ be the homotopy witnessing that $gf$ is homotopic to $\id_X$. Let $a \colon A \to X^n$ be a morphism such that $d_X^n a = 0$. Then 
\begin{equation*}
d_Y^n f^n a = f^{n+1} d_X^n a = 0\,.
\end{equation*}
As $Y$ is left $\ext{\cat{E}}$-acyclic in degree $n$, there exists an $\cat{E}$-deflation $p \colon B \to A$ and a morphism $u \colon B \to Y^{n-1}$ such that $f^n a p = d_Y^{n-1} u$. Now we have
\begin{equation*}
\begin{aligned}
ap &= \id_{X^n} a p = (g^n f^n + d_X^{n-1} h^n + h^{n+1} d_X^n) a p \\
&= g^n d_Y^{n-1} u + d_X^{n-1} h^n a p = d_X^{n-1} (g^{n-1} u + h^n a p)\,.
\end{aligned}
\end{equation*}
Hence $X$ is left $\ext{\cat{E}}$-acyclic in degree $n$.
\end{proof}

\begin{corollary} \label{Extacyc_cone_then}
Let $\cat{E}$ be an exact category. Let $f \colon X \to Y$ be a morphism in $\ccat{\cat{E}}$. 
\begin{enumerate}
\item If $X$ and $\cone(f)$ are left $\ext{\cat{E}}$-acylic in degree $n$, then so is $Y$.
\item If $Y$ is left $\ext{\cat{E}}$-acyclic in degree $n$ and $\cone(f)$ is left $\ext{\cat{E}}$-acyclic in degree $n-1$, then $X$ is left $\ext{\cat{E}}$-acyclic in degree $n$.
\end{enumerate}
\end{corollary}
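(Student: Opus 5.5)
Both parts follow by combining \cref{ConeExtacyclic} with \cref{Extacyc_htpc_retract}, applied to the rotated triangles of $f$. We use the standard chain-level facts about rotation. For $f\colon X\to Y$ with canonical maps $\iota\colon Y\to\cone(f)$ and $\pi\colon\cone(f)\to\susp X$, the canonical comparison map $\cone(\iota)\to\susp X$ is a homotopy equivalence. Likewise $\cone(\pi)$ is homotopy equivalent to $\susp Y$. Since homotopy equivalences are in particular homotopy retracts (in both directions), \cref{Extacyc_htpc_retract} shows that left $\ext{\cat{E}}$-acyclicity in a given degree transfers along them. We also use that $(\susp X)^m = X^{m+1}$, so $\susp X$ is left $\ext{\cat{E}}$-acyclic in degree $m$ if and only if $X$ is in degree $m+1$; the sign change of the differential does not affect the condition.

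For (1), consider the map $\pi\colon\cone(f)\to\susp X$. In degree $m = n-1$, \cref{ConeExtacyclic} needs two inputs:
\begin{itemize}
\item $\cone(f)$ left $\ext{\cat{E}}$-acyclic in degree $(n-1)+1=n$, which is given;
\item $\susp X$ left $\ext{\cat{E}}$-acyclic in degree $n-1$, that is, $X$ in degree $n$, which is given.
\end{itemize}
Hence $\cone(\pi)$ is left $\ext{\cat{E}}$-acyclic in degree $n-1$. But $\cone(\pi)\simeq \susp Y$, so $\susp Y$ is acyclic in degree $n-1$, i.e.\ $Y$ is left $\ext{\cat{E}}$-acyclic in degree $n$.

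For (2), apply \cref{ConeExtacyclic} in degree $n-1$ to a map whose cone is (homotopy equivalent to) $\susp X$. The natural candidate is $\iota\colon Y\to\cone(f)$ with $\cone(\iota)\simeq\susp X$. Then \cref{ConeExtacyclic} in degree $n-1$ needs $Y$ acyclic in degree $n$ (given) and $\cone(f)$ acyclic in degree $n-1$ (given). It yields that $\cone(\iota)$ is acyclic in degree $n-1$. Transferring along the homotopy equivalence, $\susp X$ is acyclic in degree $n-1$, so $X$ is left $\ext{\cat{E}}$-acyclic in degree $n$.

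The main obstacle is the routine chain-level verification of the two homotopy equivalences $\cone(\iota)\simeq\susp X$ and $\cone(\pi)\simeq\susp Y$ with explicit maps and homotopies. This is the standard argument that the homotopy category is triangulated, and it holds in $\ccat{\cat{E}}$ for any additive category. We just need to make sure the direction we use is a homotopy retract in the sense of \cref{Extacyc_htpc_retract}, i.e.\ that the composite in the correct order is homotopic to the identity. Since both composites are homotopic to identities, either direction works.
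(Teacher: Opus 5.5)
Your proof is correct and is essentially the paper's argument. The paper uses the cylinder $\cyl(f)=\cone(\susp^{-1}\cone(f)\to X)$ and the cocylinder $\cocyl(f)=\cone(\susp^{-1}Y\to\susp^{-1}\cone(f))$, which are exactly the desuspensions of your $\cone(\pi)$ and $\cone(\iota)$. It then combines the homotopy equivalences $Y\simeq\cyl(f)$ and $X\simeq\cocyl(f)$ with \cref{ConeExtacyclic} and \cref{Extacyc_htpc_retract}, just as you do, and your degree bookkeeping for both parts checks out.
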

\begin{proof}
The cylinder and cocylinder of $f \colon X \to Y$ are
\begin{equation*}
\begin{aligned}
\cyl(f) &\colonequals \cone(\susp^{-1} \cone(f) \to X) \quad \text{and} \\
\cocyl(f) &\colonequals \cone(\susp^{-1} Y \to \susp^{-1} \cone(f))\,,
\end{aligned}
\end{equation*}
respectively. It is well-known that there are homotopy equivalences $Y \to \cyl(f)$ and $X \to \cocyl(f)$. Now the claim follows from \cref{ConeExtacyclic,Extacyc_htpc_retract}.
\end{proof}

\subsection{Morphisms between Ext-resolutions} \label{sec:mor_ext_rsn}

Ext-resolutions have weaker properties than classical resolutions. In fact, inducing a morphism, and the uniqueness of these morphisms, holds up to a quasi-isomorphism.

\begin{lemma} \label{induced_mor_extresn}
Let $\cat{E}$ be an exact category and let $X$ be a complex over $\cat{E}$ concentrated in non-positive degrees and $Y$ an $\ext{\cat{E}}$-resolution. A commutative diagram
\begin{equation*}
\begin{tikzcd}
\cdots \ar[r] & X^{-2} \ar[r,"d_X^{-2}"] & X^{-1} \ar[r,"d_X^{-1}"] \ar[d,"f^{-1}"] & X^0 \ar[d,"f^0"] \ar[r] & 0 \ar[r] \ar[d,"f^1=0"] & \cdots \\
\cdots \ar[r] & Y^{-2} \ar[r,"d_Y^{-2}"] & Y^{-1} \ar[r,"d_Y^{-1}"] & Y^0 \ar[r] & 0 \ar[r] & \cdots
\end{tikzcd}
\end{equation*}
can be, up to $\cat{E}$-quasi-isomorphism, be extended to a morphism of complexes. Explicitly, there exists a complex $W$ concentrated in non-positive degrees, an $\cat{E}$-quasi-isomorphism $g \colon W \to X$ and a chain map $\hat{f} \colon W \to Y$, with $g^n = \id$ and $\hat{f}^n = f^n$ for $n \geq -1$. 
\end{lemma}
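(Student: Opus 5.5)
We build $W$, $g$ and $\hat f$ by descending induction on the degree, starting at $-1$. We set $W^n = X^n$, $g^n = \id$ and $\hat f^n = f^n$ for $n \geq -1$. Suppose that for some $n \leq -1$ we have constructed the complex $W$ in degrees $\geq n$, together with $g$ and $\hat f$ in degrees $\geq n$, satisfying the following conditions.
\begin{enumerate}
\item $g$ and $\hat f$ commute with the differentials in degrees $\geq n$.
\item $W^{n} = X^{n} \oplus P^{n}$, and $g^{n}$ is the projection onto $X^n$ (so $g^n$ is split with kernel $P^n$).
\end{enumerate}
The extra summand $P^n$ is what makes $g$ a quasi-isomorphism rather than an isomorphism.

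For the inductive step we need to lift $\hat f^{n} d_W^{n-1}$ through $d_Y^{n-1}$. First set $a = \hat f^{n} d_W^{n-1} \colon W^{n-1} \to Y^{n}$, once $W^{n-1}$ is defined. Then $d_Y^{n} a = \hat f^{n+1} d_W^{n} d_W^{n-1} = 0$, and $Y$ is left $\ext{\cat{E}}$-acyclic in degree $n<0$. Hence there is an $\cat{E}$-deflation $p \colon B \to W^{n-1}$ and a morphism $h \colon B \to Y^{n-1}$ with $ap = d_Y^{n-1}h$. The natural candidate is to replace $W^{n-1}$ by $B$ and put $\hat f^{n-1} = h$. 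We cannot simply precompose, though, because $g$ must still be a chain map landing in $X^{n-1}$, and $B$ has no map to $X^{n-1}$ in general.

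So we organise the construction as follows. Start with the naive choice $\tilde W^{n-1} = X^{n-1} \oplus K^{n}$, where $K^n \to P^n$ is chosen so that the complex has the required acyclicity. Apply the left Ext-acyclicity of $Y$ to $a = \hat f^n d$ on $\tilde W^{n-1}$. This gives a deflation $p \colon B \to \tilde W^{n-1}$ and $h \colon B \to Y^{n-1}$. Now define
\[
W^{n-1} = B, \qquad g^{n-1} = \mathrm{pr}_{X^{n-1}} \circ p, \qquad d_W^{n-1} = d \circ p, \qquad \hat f^{n-1} = h .
\]
The squares commute by construction. To keep the cone of $g$ acyclic, we add the kernel $\ker p$ as a contractible piece: $\ker p$ sits in $W^{n-1}$ and is mapped to zero. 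Concretely, at each stage $g$ should differ from the identity by an extension by a complex that is $\cat{E}$-acyclic. One cleaner way to arrange this is to construct $W$ so that the kernel $Q$ of $g$ (degreewise split, or a deflation) forms an $\cat{E}$-acyclic complex. For example, $Q$ can be built from the conflations $\ker p_k \infl[\cat{E}] B_k \defl[\cat{E}] \tilde W_k$ glued together. Then $\cone(g)$ is $\cat{E}$-acyclic, because $g$ is degreewise a deflation whose kernel complex is acyclic.

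The main obstacle is making $g$ an $\cat{E}$-quasi-isomorphism while replacing each $W^{n-1}$ by a deflation cover. The first approach I would try is to interleave each deflation $p_k \colon B_k \to W^{k}$ with its kernel. Set $W^{k} = B_k$ and let the differential $W^{k-1} \to W^k$ hit $\ker(p_k)$ as well, so that the kernel complex $\ker(g)$ is a sum of two-term contractible complexes $\ker p_k \xrightarrow{=} \ker p_k$. Each summand is split, hence $\cat{E}$-acyclic. Using the next deflation's lifting of the inclusion $\ker p_k \to B_k$, I would then check that $\cone(g)$ is homotopy equivalent to this contractible kernel complex, and hence $\cat{E}$-acyclic.
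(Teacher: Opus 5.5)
Your proposal has a genuine gap: the construction it specifies cannot be carried out, and the essential step is left unspecified.

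\textbf{The hypotheses you set up fail.} First, hypothesis (2), that $g^{n}$ is a split projection $X^{n}\oplus P^{n}\to X^{n}$, cannot be attained in general. Suppose $g^{-2}$ were a split epimorphism with section $s$. Since $g^{-1}=\id$, the chain map conditions give $d_W^{-2}=d_X^{-2}g^{-2}$ and hence $d_Y^{-2}(\hat f^{-2}s)=f^{-1}d_X^{-2}$. So $f^{-1}d_X^{-2}$ would factor through $d_Y^{-2}$ on the nose. Left $\ext{\cat{E}}$-acyclicity only gives such a factorization after precomposing with a deflation. For instance, take $Y^0=0$, $d_Y^{-2}$ a non-split $\cat{E}$-deflation, and $X$ equal to $Y^{-1}\xrightarrow{=}Y^{-1}$ in degrees $-2,-1$ with $f^{-1}=\id$; then (2) would force $d_Y^{-2}$ to split. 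Your later definition $g^{n-1}=\mathrm{pr}_{X^{n-1}}\circ p$ already abandons (2).

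Second, and more seriously, ``start with $\tilde W^{n-1}=X^{n-1}\oplus K^{n}$'' needs a differential $\tilde W^{n-1}\to W^{n}$ whose restriction $e$ to $X^{n-1}$ satisfies $g^{n}e=d_X^{n-1}$ and $d_W^{n}e=0$. Once $W^{n}$ has itself been replaced by a deflation cover $B\to\tilde W^{n}$ (i.e.\ from $W^{-3}$ on), such an $e$ means lifting a morphism out of $X^{n-1}$ through an arbitrary $\cat{E}$-deflation. That is impossible in general. Saying $K^{n}\to P^{n}$ is ``chosen so that the complex has the required acyclicity'' does not address this. The obstacle you name is also misdiagnosed: $B$ does map to $X^{n-1}$ via $g^{n-1}p$. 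The real difficulty is producing the object to which Ext-acyclicity is applied.

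\textbf{The missing idea is a pullback in place of the direct sum.}
\begin{enumerate}
\item Keep $P^{n}$, the pullback of $X^{n}\xrightarrow{a^{n}}P^{n+1}\xleftarrow{p^{n+1}}W^{n+1}$, with $a^{n}d_X^{n-1}=0$.
\item Its universal property gives $a^{n-1}\colon X^{n-1}\to P^{n}$ with $q^{n}a^{n-1}=d_X^{n-1}$ and $b^{n}a^{n-1}=0$.
\item Define $P^{n-1}$ as the pullback of $a^{n-1}$ along the deflation $p^{n}\colon W^{n}\to P^{n}$. This is an extension of $X^{n-1}$ by $\ker p^{n}$, not a direct sum. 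It comes with $b^{n-1}\colon P^{n-1}\to W^{n}$ and a deflation $q^{n-1}\colon P^{n-1}\to X^{n-1}$, satisfying $g^{n}b^{n-1}=d_X^{n-1}q^{n-1}$ and $d_W^{n}b^{n-1}=0$.
\item Only now apply left $\ext{\cat{E}}$-acyclicity of $Y$ in degree $n$ to $\hat f^{n}b^{n-1}$. This gives a deflation $p^{n-1}\colon W^{n-1}\to P^{n-1}$ and $\hat f^{n-1}$. Set $d_W^{n-1}=b^{n-1}p^{n-1}$ and $g^{n-1}=q^{n-1}p^{n-1}$.
\end{enumerate}

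Acyclicity of $\cone(g)$ is then immediate. Pullbacks along deflations give $\cat{E}$-conflations $P^{n}\to W^{n+1}\oplus X^{n}\to P^{n+1}$, and these are exactly the factorizations of the differentials of $\cone(g)$.

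Your ``glue in $\ker p$'' heuristic is a shadow of this, with two caveats. In general $\ker(g)$ is an iterated extension of the pieces $\ker p^{k}$, not a sum of contractible complexes $\ker p_k\xrightarrow{=}\ker p_k$. Also, the implication ``degreewise deflation with acyclic kernel $\Rightarrow$ acyclic cone'' would itself need proof for a general, not weakly idempotent complete, $\cat{E}$. The explicit conflations make that unnecessary.
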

\begin{proof}
We use induction on the cohomological degree to construct the complex $W$, the $\cat{E}$-quasi-isomorphism $g \colon W \to X$ and the chain map $\hat{f} \colon W \to Y$. 

Set $W^n = X^n$ and $d_W^n = d_X^n$ and $g^n = \id_{X^n}$ for $n \geq -1$. Further, set $\hat{f}^n = f^n$ for $n = \geq -1$.

We claim, that for every $n$ there exists a commutative diagram
\begin{equation} \label{ext_acyc_extend_to_chainmap:ind}
\begin{tikzcd}[column sep=large]
X^{n-1} \ar[d,"{d_X^{n-1}}" swap] & & & Y^{n-1} \ar[d,"{d_Y^{n-1}}"] \\
X^{n} \ar[d,"{d_X^{n}}" swap] \ar[dr,"{a^{n}}"] & P^{n} \ar[l,defl=\cat{E},"{q^{n}}" {swap,near start}] \ar[dr,"{b^{n}}"] & W^{n} \ar[d,"{d_W^{n}}"] \ar[r,"{\hat{f}^{n}}"] \ar[l,defl=\cat{E},"{p   ^{n}}"  {swap,near start}] & Y^{n} \ar[d,"{d_Y^{n}}"] \\
X^{n+1} & P^{n+1} \ar[l,defl=\cat{E},"{q^{n+1}}" {near start}] & W^{n+1} \ar[r,"{\hat{f}^{n+1}}" swap] \ar[l,defl=\cat{E},"{p^{n+1}}" {near start}] & Y^{n+1}
\end{tikzcd}
\end{equation}
where $P^{n}$ is the pullback of the span $X^{n} \to P^{n+1} \to W^{n+1}$ and $a^n d_X^{n-1} = 0$. 

For $n \geq -1$, we set $P^n = X^n$, $a^{n} = b^{n} = d_X^{n}$ and take the remaining morphisms to be the identity morphism. Clearly, the diagram \cref{ext_acyc_extend_to_chainmap:ind} commutes and the square with corners $P^{n+1}$, $W^{n+1}$, $X^{n}$ and $P^{n}$ is a pullback square and $a^n d_X^{n-1} = 0$. 

We assume the morphism and objects in \cref{ext_acyc_extend_to_chainmap:ind} exist with the desired properties for some $n \leq -1$. Since $P^{n}$ is a pullback and $a^n d_X^{n-1} = 0$, there exists a morphism $a^{n-1} \colon X^{n-1} \to P^{n}$ such that
\begin{equation*}
d_X^{n-1} = q^{n} a^{n-1} \quad \text{and} \quad b^{n} a^{n-1} = 0\,.
\end{equation*}
As $q^n a^{n-1} d_X^{n-2} = 0$ and $b^n a^{n-1} d_X^{n-2} = 0$, the universal property of the pullback yields $a^{n-1} d_X^{n-2} = 0$. 
Let $P^{n-1}$ be the pullback of $a^{n-1}$ along $p^n$ with canonical morphisms $q^{n-1} \colon P^{n-1} \to X^{n-1}$ and $b^{n-1} \colon P^{n-1} \to W^{n}$. We obtain
\begin{equation*}
\begin{aligned}
d_Y^{n} \hat{f}^{n} b^{n-1} &= \hat{f}^{n+1} d_W^{n} b^{n-1} = \hat{f}^{n+1} b^{n} p^{n} b^{n-1} \\
&= \hat{f}^{n+1} b^{n} a^{n-1} q^{n-1} = 0\,.
\end{aligned}
\end{equation*}
Since $Y$ is left $\ext{\cat{E}}$-acyclic in degree $n$, there exists a commutative diagram
\begin{equation*}
\begin{tikzcd}[column sep=large]
P^{n-1} \ar[dr,"{b^{n-1}}" swap] & W^{n-1} \ar[l,defl=\cat{E},"{p^{n-1}}" , swap] \ar[r,"\hat{f}^{n-1}"] & Y^{n-1} \ar[d,"{d_Y^{n-1}}"] \\
& W^{n} \ar[r,"\hat{f}^{n}"] & Y^{n} \nospacepunct{.}
\end{tikzcd}
\end{equation*}
Setting $d_W^{n-1} \colonequals b^{n-1} p^{n-1}$ yields the diagram \cref{ext_acyc_extend_to_chainmap:ind} with the desired properties for $n-1$.

From the construction we obtain
\begin{equation*}
d_W^{n} d_W^{n-1} = b^{n} p^{n} b^{n-1} p^{n-1} = b^{n} a^{n-1} q^{n-1} p^{n-1} = 0
\end{equation*}
and hence $W$ is a complex. From the commutativity of \cref{ext_acyc_extend_to_chainmap:ind} we have that $\hat{f} \colon W \to Y$ is a chain map and $g^{n} \colonequals q^{n} p^{n}$ defines a chain map $g \colon W \to X$ as well. It remains to observe that the sequences
\begin{equation*}
P^{n} \xrightarrow{\begin{psmallmatrix} b^{n} \\ q^{n} \end{psmallmatrix}} W^{n+1} \oplus X^{n} \xrightarrow{\begin{psmallmatrix} -p^{n+1} & a^{n} \end{psmallmatrix}} P^{n+1}
\end{equation*}
are $\cat{E}$-conflations for all $n$. Hence $\cone(g)$ is $\cat{E}$-acyclic in every degree and $g$ is an $\cat{E}$-quasi-isomorphism. By construction $W$ is concentrated in non-positive degrees.
\end{proof}

\begin{lemma} \label{ind_mor_nullhptc}
Let $\cat{E}$ be an exact category and $f \colon X \to Y$ a morphism in $\ccat{\cat{E}}$ with $X$ and $Y$ $\ext{\cat{E}}$-resolutions. If there exists a morphism $h^{0} \colon X^0 \to Y^{-1}$ such that $f^0 = h^0 d_Y^{-1}$, then there exists an $\cat{E}$-quasi-isomorphism $g \colon W \to X$ such that $fg$ is null-homotopic.
\end{lemma}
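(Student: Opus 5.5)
The plan is to run the same inductive pullback construction as in the proof of \cref{induced_mor_extresn}, but to build a null-homotopy instead of a chain map. Let $h^0 \colon X^0 \to Y^{-1}$ be the given morphism with $f^0 = d_Y^{-1} h^0$. Throughout, the relation $f^n g^n = d_Y^{n-1} h^n + h^{n+1} d_W^n$ will play the role that the chain map condition on $\hat f$ played there.

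\textbf{Initial data.} For $n \geq 0$ set $W^n = X^n$, $g^n = \id$ and $P^n = X^n$. Take $h^n = 0$ for $n \geq 1$, and take $h^0$ as given. Then the homotopy relation $f^n g^n = d_Y^{n-1} h^n + h^{n+1} d_W^n$ holds for all $n \geq 0$.

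\textbf{Inductive hypothesis.} As in \cref{ext_acyc_extend_to_chainmap:ind}, assume that for some $n \leq 0$ we have constructed:
\begin{itemize}
\item $\cat{E}$-deflations $q^n \colon P^n \to X^n$ and $p^n \colon W^n \to P^n$;
\item morphisms $a^n \colon X^n \to P^{n+1}$ and $b^n \colon P^n \to W^{n+1}$ with $a^n d_X^{n-1} = 0$, such that $P^n$ is the pullback of $a^n$ along $p^{n+1}$;
\item $d_W^n = b^n p^n$ and $g^n = q^n p^n$;
\item homotopy components $h^k$ for $k \geq n$ satisfying the relation above in all degrees $\geq n$.
\end{itemize}

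\textbf{Inductive step.} Exactly as in \cref{induced_mor_extresn}, the pullback property gives $a^{n-1} \colon X^{n-1} \to P^n$ with $q^n a^{n-1} = d_X^{n-1}$ and $b^n a^{n-1} = 0$, and hence $a^{n-1} d_X^{n-2} = 0$. Let $P^{n-1}$ be the pullback of $a^{n-1}$ along $p^n$, with projections $q^{n-1}$ and $b^{n-1}$.

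The key computation is that the morphism $e \colonequals f^{n-1} q^{n-1} - h^n b^{n-1} \colon P^{n-1} \to Y^{n-1}$ is killed by $d_Y^{n-1}$. First, $g^n b^{n-1} = q^n p^n b^{n-1} = q^n a^{n-1} q^{n-1} = d_X^{n-1} q^{n-1}$. Using this and the inductive homotopy relation in degree $n$,
\begin{equation*}
d_Y^{n-1} e = f^n d_X^{n-1} q^{n-1} - d_Y^{n-1} h^n b^{n-1} = (f^n g^n - d_Y^{n-1} h^n) b^{n-1} = h^{n+1} d_W^n b^{n-1}.
\end{equation*}
This last term vanishes, because
\begin{equation*}
d_W^n b^{n-1} = b^n p^n b^{n-1} = b^n a^{n-1} q^{n-1} = 0.
\end{equation*}
Since $n-1 \leq -1$, the complex $Y$ is left $\ext{\cat{E}}$-acyclic in degree $n-1$. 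This yields an $\cat{E}$-deflation $p^{n-1} \colon W^{n-1} \to P^{n-1}$ and a morphism $h^{n-1} \colon W^{n-1} \to Y^{n-2}$ with $e\, p^{n-1} = d_Y^{n-2} h^{n-1}$. Now set $d_W^{n-1} = b^{n-1} p^{n-1}$ and $g^{n-1} = q^{n-1} p^{n-1}$. The identity $e\,p^{n-1} = d_Y^{n-2} h^{n-1}$ is then precisely $f^{n-1} g^{n-1} = d_Y^{n-2} h^{n-1} + h^n d_W^{n-1}$, the homotopy relation in degree $n-1$.

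\textbf{Conclusion.} The verification that $W$ is a complex and $g$ is a chain map is verbatim as in \cref{induced_mor_extresn}. The same argument also shows that the sequences $P^n \to W^{n+1} \oplus X^n \to P^{n+1}$ are $\cat{E}$-conflations, so $\cone(g)$ is $\cat{E}$-acyclic and $g$ is an $\cat{E}$-quasi-isomorphism. The family $(h^n)$ is then a null-homotopy of $fg$. Note that only the left $\ext{\cat{E}}$-acyclicity of $Y$ in negative degrees is used; that of $X$ is not needed.

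The main obstacle is the key computation that $d_Y^{n-1} e = 0$. It requires the correct choice of $e$, namely correcting $f^{n-1} q^{n-1}$ by the previously constructed $h^n$, and it depends on the pullback identities $g^n b^{n-1} = d_X^{n-1} q^{n-1}$ and $d_W^n b^{n-1} = 0$. Everything else is bookkeeping identical to the previous lemma.
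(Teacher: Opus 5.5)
Your proposal is correct and follows the paper's proof essentially step for step. The paper uses the same inductive pullback construction as in the preceding lemma, with the null-homotopy built degree by degree by applying left $\ext{\cat{E}}$-acyclicity of $Y$ in degree $n-1$ to $f^{n-1}q^{n-1} - h^n b^{n-1}$. You also read the hypothesis correctly as $f^0 = d_Y^{-1}h^0$, and you are right that only the left $\ext{\cat{E}}$-acyclicity of $Y$ is used.
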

\begin{proof}
We use induction on the cohomological degree to construct the complex $W$, the $\cat{E}$-quasi-isomorphism $g \colon W \to X$ and the map $h \colon W \to \susp^{-1} Y$ such that $h$ is the homotopy that witnesses that $fg$ is null-homotopic.

Set $W^{n} = X^{n}$ for $n \geq 0$. We claim, that for every $n$, there exists a commutative diagram
\begin{equation} \label{ind_mor_nullhptc:IV}
\begin{tikzcd}[row sep=large,column sep=large]
& & X^{n-1} \ar[r,"f^{n-1}"] \ar[d,"d_X^{n-1}"] & Y^{n-1} \ar[d,"d_Y^{n-1}"] \\
W^{n} \ar[r,defl=\cat{E},"p^{n}" {near start}] \ar[d,"d_W^{n}"] & P^{n} \ar[r,defl=\cat{E},"q^{n}" {near start}] \ar[dl,"b^{n}" swap] & X^{n} \ar[r,"f^{n}"] \ar[d,"d_X^{n}"] \ar[dl,"a^{n}" swap] & Y^{n} \ar[d,"d_Y^{n}"] \\
W^{n+1} \ar[r,defl=\cat{E},"p^{n+1}"] & P^{n+1} \ar[r,defl=\cat{E},"q^{n+1}"] & X^{n+1} \ar[r,"f^{n+1}"] & Y^{n+1}
\end{tikzcd}
\end{equation}
and morphisms $h^{n+1} \colon W^{n+1} \to Y^{n}$ and $h^{n} \colon W^{n} \to Y^{n-1}$ such that $P^{n}$ is the pullback of the span $X^{n}\rightarrow P^{n+1}\to W^{n+1}$ and 
\begin{equation*}
f^{n} q^{n} p^{n} = d^{n-1}_Y h^{n} + h^{n+1} d^{n}_W \quad \text{and} \quad a^n d_X^{n-1} = 0\,.
\end{equation*}

For $n \geq 0$ we set $P^{n} = X^{n}$ and $a^{n} = b^{n} = d_X^{n}$. When $n = 0$, the morphism $h^0$ is part of the assumptions and we further set $h^n = 0$ for $n > 0$. With these assignments the claim holds for $n \geq 0$.

We assume the morphisms and objects in \cref{ind_mor_nullhptc:IV} exist with the desired properties for some $n \leq 0$. Since $P^{n}$ is a pullback, there exists a morphism $a^{n-1} \colon X^{n-1} \to P^{n}$ such that
\begin{equation*}
d_X^{n-1} = q^{n} a^{n-1} \quad \text{and} \quad 0 = b^{n} a^{n-1}\,.
\end{equation*}
As $q^{n} b^{n} d_X^{n-2} = 0$ and $b^{n} a^{n-1} d_X^{n-2} = 0$, the universal property of the pullback yields $a^{n-1} d_X^{n-2} = 0$. Let $P^{n-1}$ be the pullback of $a^{n-1}$ along $p^{n}$ with canonical morphisms $q^{n-1} \colon P^{n-1} \to X^{n-1}$ and $b^{n-1} \colon P^{n-1} \to W^{n}$. We obtain
\begin{equation*}
\begin{aligned}
&\quad d_Y^{n-1} (f^{n-1} q^{n-1} - h^{n} b^{n-1}) \\
&= f^{n} d_X^{n-1} q^{n-1} - f^{n} q^{n} p^{n} b^{n-1} + h^{n+1} d_W^{n} b^{n-1} = 0\,.
\end{aligned}
\end{equation*}
Since $Y$ is left $\ext{\cat{E}}$-acyclic in degree $n-1$ for $n \leq 0$, there exists an object $W^{n-1}$, a morphism $h^{n-1} \colon W^{n-1} \to Y^{n-2}$ and an $\cat{E}$-deflation $p^{n-1} \colon W^{n-1} \to P^{n-1}$ such that
\begin{equation*}
f^{n-1} q^{n-1} p^{n-1} = h^{n} b^{n-1} p^{n-1} + d_Y^{n-2} h^{n-1}\,.
\end{equation*}
It remains to set $d_W^{n-1} \colonequals b^{n-1} p^{n-1}$. 

By construction $W$ is a complex, and $g \colon W \to X$ given by $g^{n} \colonequals q^{n} p^{n}$ is a chain map. It remains to observe that the sequences
\begin{equation*}
P^{n} \to X^{n} \oplus W^{n+1} \to P^{n+1}
\end{equation*}
are $\cat{E}$-conflations. Hence the complex $\cone(g)$ is $\cat{E}$-acyclic in every degree, $g$ is an $\cat{E}$-quasi-isomorphism and $fg$ is null-homotopic.
\end{proof}

\begin{remark}
The induced morphism in \cref{induced_mor_extresn} need not be unique. However, given two lifts $X \xleftarrow{\sim} W \to Y$ and $X \xleftarrow{\sim} W' \to Y$, there exist quasi-isomorphisms $V \xrightarrow{\sim} W$ and $V \xrightarrow{\sim} W'$ such that the compositions to $X$ coincide. Hence \cref{ind_mor_nullhptc} can be applied to the difference of $V \to W \to Y$ and $V \to W' \to Y$. This yields another quasi-isomorphism and a null-homotopic morphism. In this sense the induced morphism from \cref{induced_mor_extresn} is unique.
\end{remark}

\section{In the derived category} \label{sec:dcat}

In the previous section we saw that left Ext-acyclicty behaves well with respect to many operations of complexes. In particular, we can consider left Ext-acyclicty in the setting of the homotopy and derived category. We use it to define an analogue of the standard t-structure in the derived category of an abelian category. While the resulting pair need not be a t-structure, their `heart' is an interesting extension of the initial exact category.

Let $\cat{E}$ be an exact category with underlying additive category $\cat{A}$. We denote by $\kcat{\cat{A}}$ the homotopy category of $\cat{A}$ and by $\dcat{\cat{E}}$ the derived category of $\cat{E}$; recall that the derived category $\dcat{\cat{E}} = \kcat{\cat{A}}/\operatorname{thick}(\Ac{\cat{E}})$ is the Verdier quotient of the homotopy category by the thick subcategory generated by the $\cat{E}$-acyclic complexes; see \cite[Construction~1.5]{Neeman:1990}. When $\cat{E}$ is (weakly) idempotent complete, then the $\cat{E}$-acyclic complexes already form a thick subcategory. We write $\cat{A}$ to emphasize that the formation of the homotopy category depends only on the additive, and not the exact, structure.

We consider different boundedness conditions for the homotopy and derived category: We write $\dbcat{\cat{E}}$ for the derived category of bounded complexes, $\dcat[-]{\cat{E}}$ the derived category of right bounded complexes and $\dcat[+]{\cat{E}}$ the derived category of left bounded complexes. 

\begin{corollary} \label{ext_acyc_kcat_dcat}
Let $\cat{E}$ be an exact category with underlying additive category $\cat{A}$. For $* \in \{\emptyset,b,+,-\}$ the following holds:
\begin{enumerate}
\item \label{ext_acyc_kcat_dcat:kcat} The subcategory of $\kcat[*]{\cat{A}}$ of complexes that are left $\ext{\cat{E}}$-acylic in degree $n$ is closed under direct summands, isomorphisms and extensions. 
\item \label{ext_acyc_kcat_dcat:dcat} The subcategory of $\dcat[*]{\cat{E}}$ of complexes that are left $\ext{\cat{E}}$-acylic in degree $n$ is closed under direct summands, isomorphisms and extensions. 
\end{enumerate}
\end{corollary}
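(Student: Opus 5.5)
For \cref{ext_acyc_kcat_dcat:kcat} we work directly with chain complexes. First, left $\ext{\cat{E}}$-acyclicity in degree $n$ is invariant under isomorphism in $\kcat[*]{\cat{A}}$. An isomorphism in the homotopy category is a homotopy equivalence $f \colon X \to Y$, so $f$ is a homotopy retract with homotopy inverse $g$, and symmetrically $g$ is a homotopy retract. Hence \cref{Extacyc_htpc_retract} transfers the property in both directions. The same lemma handles direct summands. If $X \oplus X'$ is isomorphic in $\kcat[*]{\cat{A}}$ to a complex that is left $\ext{\cat{E}}$-acyclic in degree $n$, then so is $X \oplus X'$ by invariance. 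The inclusion $X \to X \oplus X'$ is a (strict, hence homotopy) retract via the projection, so \cref{Extacyc_htpc_retract} gives the claim for $X$.

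For extensions, let $X \to Y \to Z \to \susp X$ be a triangle in $\kcat[*]{\cat{A}}$ with $X$ and $Z$ left $\ext{\cat{E}}$-acyclic in degree $n$. Up to isomorphism we may take $Y \cong \cone(\susp^{-1} Z \xrightarrow{u} X)$ for a chain map $u$. The complex $\susp^{-1}Z$ is left $\ext{\cat{E}}$-acyclic in degree $n+1$, since shifting moves degrees and only changes the sign of the differential, which does not affect the condition. So \cref{ConeExtacyclic}, applied with source $\susp^{-1}Z$ (degree $n+1$) and target $X$ (degree $n$), shows that $\cone(u)$ is left $\ext{\cat{E}}$-acyclic in degree $n$. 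By isomorphism invariance, so is $Y$. Boundedness conditions $*$ play no role, since all constructions (cones, shifts, summands) stay within the bounded variants.

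For \cref{ext_acyc_kcat_dcat:dcat} the main obstacle is that isomorphisms in $\dcat[*]{\cat{E}}$ are roofs involving $\cat{E}$-quasi-isomorphisms, not homotopy equivalences. So the key step is that a quasi-isomorphism $s \colon X \to Y$ preserves and reflects left $\ext{\cat{E}}$-acyclicity in degree $n$. Its cone lies in $\operatorname{thick}(\Ac{\cat{E}})$. An $\cat{E}$-acyclic complex is left $\ext{\cat{E}}$-acyclic in every degree by \cref{connection_diff_acyclic}, and by part \cref{ext_acyc_kcat_dcat:kcat} the class of complexes left $\ext{\cat{E}}$-acyclic in every degree is closed under isomorphisms, summands, shifts and extensions (cones). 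Hence it is thick and contains $\operatorname{thick}(\Ac{\cat{E}})$. So $\cone(s)$ is left $\ext{\cat{E}}$-acyclic in all degrees, and \cref{Extacyc_cone_then} applies in both directions: part (1) gives the implication from $X$ to $Y$, and part (2) gives the implication from $Y$ to $X$. Thus the property is well defined on objects of $\dcat[*]{\cat{E}}$ and closed under isomorphism.

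It remains to handle summands and extensions in $\dcat[*]{\cat{E}}$. For extensions, every triangle in $\dcat[*]{\cat{E}}$ is isomorphic to the image of a triangle in $\kcat[*]{\cat{A}}$, and we conclude by part \cref{ext_acyc_kcat_dcat:kcat} together with the isomorphism invariance just proved. For summands, suppose $X \oplus X' \cong Y$ in $\dcat[*]{\cat{E}}$ with $Y$ left $\ext{\cat{E}}$-acyclic in degree $n$. By isomorphism invariance the complex $X \oplus X'$ is left $\ext{\cat{E}}$-acyclic in degree $n$. The direct sum in $\dcat[*]{\cat{E}}$ is represented by the termwise direct sum of complexes, so $X$ is a strict retract of $X \oplus X'$ and we conclude as in the homotopy category.
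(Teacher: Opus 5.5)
Your proof is correct and follows the paper's approach. Part (1) comes from the homotopy-retract lemma (isomorphisms and summands) and the cone lemma (extensions). Part (2) comes from observing that the class contains all $\cat{E}$-acyclic complexes, so that (1) descends to the Verdier quotient. The only difference is that you make this descent explicit, which the paper leaves implicit: the class of complexes left $\ext{\cat{E}}$-acyclic in every degree is thick, hence contains $\operatorname{thick}(\Ac{\cat{E}})$, and invariance under quasi-isomorphisms then follows in both directions from the cylinder/cocylinder corollary.
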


\begin{proof}
The category in \cref{ext_acyc_kcat_dcat:kcat} is closed under direct summands and isomorphisms by \cref{Extacyc_htpc_retract}, and it is extension-closed by \cref{ConeExtacyclic}. As $\cat{E}$-acyclicity implies left Ext$_{\cat{E}}$-acylicity by \cref{connection_diff_acyclic}, the subcategory in \cref{ext_acyc_kcat_dcat:kcat} contains all $\cat{E}$-acyclic complexes. Hence \cref{ext_acyc_kcat_dcat:kcat} implies \cref{ext_acyc_kcat_dcat:dcat}. 
\end{proof}

\subsection{Candidate for a t-structure} \label{sec:tpair}

The derived category of an abelian category has a standard t-structure whose heart is the initial abelian category. On the derived category of an exact category such a t-structure need not exist, as the heart of any t-structure is abelian. With additional assumptions on the exact category, it is possible to construct a t-structure on the derived category of an exact category; see \cite[Section~2.1]{Huber:1995}, \cite[Section~1.2.2]{Schneiders:1999} and \cite[Proposition~3.5]{Henrard/Kvamme/vanRoosmalen/Wegner:2023}. These constructions all require the existence of kernels. Using left Ext-acyclicity, we define a pair of subcategories that generalizes these constructions. We will give necessary and sufficient conditions for this pair to be a t-structure. 

Let $\cat{E}$ be an exact category and $* \in \{\emptyset,b,+,-\}$. We denote by $\cat{U}^*(\cat{E})$ and $\cat{V}^*(\cat{E})$ the full subcategories of $\dcat[*]{\cat{E}}$ containing the complexes that are $\cat{E}$-acyclic in positive and negative degrees, respectively. For an abelian category $\cat{A}$, the pair $(\cat{U}(\cat{A}),\cat{V}(\cat{A}))$ is the standard t-structure. However, in general $\cat{U}(\cat{E}) \cap \cat{V}(\cat{E})$ is not necessarily abelian. 

We denote by $\cat{V}^*_\ell(\cat{E})$ and $\cat{U}^*_r(\cat{E})$ the full subcategories of $\dcat[*]{\cat{E}}$ containing the complexes that are left $\ext{\cat{E}}$-acyclic in negative degrees and that are right $\ext{\cat{E}}$-acyclic in positive degrees, respectively. From \cref{connection_diff_acyclic} we have
\begin{equation*}
\cat{U}^*(\cat{E}) \subseteq \cat{U}^*_r(\cat{E}) \quad \text{and} \quad \cat{V}^*(\cat{E}) \subseteq \cat{V}^*_\ell(\cat{E})\,.
\end{equation*}
Also note, that $(\cat{U}^*(\cat{E}))^\op = \cat{V}^*(\cat{E}^\op)$ and $(\cat{U}^*_r(\cat{E}))^\op = \cat{V}^*_\ell(\cat{E}^\op)$. In the following we consider the pair $(\cat{U}^*(\cat{E}),\cat{V}^*_\ell(\cat{E}))$, the dual results hold for $(\cat{U}^*_r(\cat{E}),\cat{V}^*(\cat{E}))$. 

\begin{proposition} \label{ext_acyc_t_pair}
Let $\cat{E}$ be an exact category. For $* \in \{\emptyset,b,+,-\}$, the pair $(\cat{U}^*(\cat{E}),\cat{V}^*_\ell(\cat{E}))$ satisfies the following conditions:
\begin{enumerate}
\item \label{ext_acyc_t_pair:strict} $\cat{U}^*(\cat{E})$ and $\cat{V}^*_\ell(\cat{E})$ are closed under direct summands, isomorphisms and extensions;
\item \label{ext_acyc_t_pair:susp} $\susp \cat{U}^*(\cat{E}) \subseteq \cat{U}^*(\cat{E})$ and $\cat{V}^*_\ell(\cat{E}) \subseteq \susp \cat{V}^*_\ell(\cat{E})$; and
\item \label{ext_acyc_t_pair:Hom} $(\susp \cat{U}^*(\cat{E}))^\perp = \cat{V}^*_\ell(\cat{E})$ in $\dcat[*]{\cat{E}}$.
\end{enumerate}
\end{proposition}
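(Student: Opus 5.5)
The plan is to treat the three items in order, with \cref{ext_acyc_t_pair:Hom} as the core, and to lean on \cref{ext_acyc_kcat_dcat} and on the two lifting lemmas of \cref{sec:mor_ext_rsn}.

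\textbf{Items \cref{ext_acyc_t_pair:strict} and \cref{ext_acyc_t_pair:susp}.} $\cat{V}^*_\ell(\cat{E})$ is the intersection over $n<0$ of the subcategories of \cref{ext_acyc_kcat_dcat}\cref{ext_acyc_kcat_dcat:dcat}, so it is closed under summands, isomorphisms and extensions. For $\cat{U}^*(\cat{E})$ I would first prove a truncation statement. A complex $X$ is $\cat{E}$-acyclic in all positive degrees if and only if it is isomorphic in $\dcat[*]{\cat{E}}$ to a complex concentrated in non-positive degrees. To see this, acyclicity at degree $1$ factors $d_X^0$ as a deflation $X^0 \defl[\cat{E}] K^1$ followed by an inflation. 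Replacing $X^0$ by the kernel $Z^0$ of this deflation gives a subcomplex $\tau X \to X$ whose cone is $\cat{E}$-acyclic.

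With this description, the inclusion $\susp\cat{U}^* \subseteq \cat{U}^*$ is immediate. The inclusion $\cat{V}^*_\ell \subseteq \susp \cat{V}^*_\ell$ follows because left $\ext{\cat{E}}$-acyclicity in degrees $<0$ implies it in degrees $<1$ after reindexing. Closure of $\cat{U}^*$ under extensions follows by representing the connecting morphism $\susp^{-1}Z \to X$ by a roof through a complex that is again truncated, and noting that a cone of a map between complexes concentrated in degrees $\le 0$ is concentrated in degrees $\le 0$. For closure under summands I would instead use the identity ${}^{\perp}(\susp^{-1}\cat{V}^*_\ell) = \cat{U}^*$, if it can be extracted from the argument in \cref{ext_acyc_t_pair:Hom}. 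Otherwise I would check directly that a homotopy retract of an $\cat{E}$-acyclic-in-degree-$n$ complex is $\cat{E}$-acyclic in degree $n$, where weak idempotent completeness may be needed.

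\textbf{Item \cref{ext_acyc_t_pair:Hom}, the inclusion $\cat{V}^*_\ell \subseteq (\susp\cat{U}^*)^\perp$.} Let $X \in \cat{U}^*$, which after truncation is concentrated in degrees $\le 0$, and let $Y \in \cat{V}^*_\ell$. A morphism $\susp X \to Y$ in $\dcat[*]{\cat{E}}$ is a roof $\susp X \xleftarrow{\sim} W \xrightarrow{f} Y$. Composing with the truncation quasi-isomorphism, we may assume $W$ is concentrated in degrees $\le -1$. Then $f^0=0$, so the hypothesis of \cref{ind_mor_nullhptc} holds with $h^0=0$. That proof only uses left $\ext{\cat{E}}$-acyclicity of $Y$ in negative degrees, not that $Y$ vanishes in positive degrees, so it yields a further quasi-isomorphism $W' \to W$ after which $f$ becomes null-homotopic. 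Hence the morphism is zero.

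\textbf{Item \cref{ext_acyc_t_pair:Hom}, the reverse inclusion.} Suppose $Y \in (\susp\cat{U}^*)^\perp$, and let $n<0$ and $a\colon A\to Y^n$ with $d_Y^n a=0$. Then $a$ defines a chain map from the stalk complex $A$ placed in degree $n$, which lies in $\susp \cat{U}^*$, to $Y$. It is zero in $\dcat[*]{\cat{E}}$, so there is an $\cat{E}$-quasi-isomorphism $s\colon W\to A[-n]$ with $as$ null-homotopic via some $h$. Acyclicity of $\cone(s)$ in the degree of $A$ forces $s^n\colon W^n\to A$ to be an $\cat{E}$-deflation. The homotopy gives
\[
a s^n = d_Y^{n-1}h^n + h^{n+1}d_W^n .
\]
The main obstacle is the term $h^{n+1}d_W^n$. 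I would try to remove it by restricting to $B = \ker(d_W^n)$, or to a suitable pullback. For this I would use acyclicity of $\cone(s)$ in degree $n+1$ (where $A[-n]$ vanishes), which says $d_W^n$ factors through a deflation onto an inflation. The goal is to show that the restricted map $B \to W^n \to A$ is still an $\cat{E}$-deflation. Then $aps^n$ factors through $d_Y^{n-1}$, which is the required left $\ext{\cat{E}}$-acyclicity at degree $n$.
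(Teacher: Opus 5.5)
\textbf{What matches the paper.} Your treatment of items (1) and (2), and of the inclusion $\cat{V}^*_\ell(\cat{E})\subseteq(\susp\cat{U}^*(\cat{E}))^\perp$, is essentially the paper's. The paper also reduces to a chain map out of a complex concentrated in negative degrees and applies \cref{ind_mor_nullhptc}. It applies it to the brutal truncation $Y^{\leqslant 0}$, which amounts to your remark that the lemma never uses $Y^{>0}=0$.

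One caution about (1). The identity ${}^{\perp}(\susp^{-1}\cat{V}^*_\ell(\cat{E}))=\cat{U}^*(\cat{E})$ is equivalent to ${}^{\perp}\cat{V}^*_\ell(\cat{E})=\susp\cat{U}^*(\cat{E})$. The paper explicitly does not expect this to hold (see the remark following the proposition), so only your direct fallback is available for closure under summands. The paper itself just cites \cref{ext_acyc_kcat_dcat} there.

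\textbf{The gap: the reverse inclusion.} You reach the right identity $as^n=d_Y^{n-1}h^n+h^{n+1}d_W^n$ and the right goal, but the decisive step is left open. The ingredient you name does not supply it, for two reasons.
\begin{enumerate}
\item With the convention $\cone(s)^k=W^{k+1}\oplus(A[-n])^k$, acyclicity in degree $n+1$ concerns $\cone(s)^{n+1}=W^{n+2}$. It therefore involves $d_W^{n+1}$ and $d_W^{n+2}$, not $d_W^n$.
\item Even a deflation--inflation factorization of $d_W^n$, together with $s^n$ being a deflation, does not force $s^n|_{\ker(d_W^n)}$ to be a deflation. Take $d_W^n=s^n=\id_A$: then $\ker(d_W^n)=0$.
\end{enumerate}

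What is needed is the joint information in acyclicity of $\cone(s)$ in degree $n$, where $A$ sits. There, the differential $W^n\to W^{n+1}\oplus A$ with components $-d_W^n$ and $s^n$ factors as a deflation $\epsilon\colon W^n\to K$ followed by the inflation $k\colon K\to W^{n+1}\oplus A$. Here $K$ is the kernel of the next differential. Since that differential is zero on $A$, the inclusion of $A$ factors as $k\kappa$ for some $\kappa\colon A\to K$. Pulling back $\epsilon$ along $\kappa$ gives a deflation $p\colon B\to A$ and a morphism $v\colon B\to W^n$ with $k\epsilon v=k\kappa p$. This says exactly $d_W^n v=0$ and $s^n v=p$, so
\[
ap=as^nv=d_Y^{n-1}h^nv+h^{n+1}d_W^nv=d_Y^{n-1}(h^nv),
\]
which is left $\ext{\cat{E}}$-acyclicity in degree $n$. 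This is your ``suitable pullback''. Your separate claim that $s^n$ itself is a deflation is then not needed.

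\textbf{Comparison with the paper.} With this repair, your argument is the paper's argument unwound. The paper uses the triangulated structure of the homotopy category to extend $\hat a$ along $\susp^{-n}A\to\cone(g)$ to a chain map $\cone(g)\to X$, up to a homotopy $\sigma$. That chain map is your null-homotopy in disguise. The paper then performs the same pullback: the deflation of $\cone(g)^{n-1}$ onto $\ker(d^n_{\cone(g)})$, pulled back along the induced map $A\to\ker(d^n_{\cone(g)})$. Working directly with the null-homotopy, as you do, avoids the detour through the triangulated structure and the extra term $\sigma$.
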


\begin{proof}
Part \cref{ext_acyc_t_pair:strict} is a direct consequence of \cref{ext_acyc_kcat_dcat}. Part \cref{ext_acyc_t_pair:susp} holds by construction. It remains to show \cref{ext_acyc_t_pair:Hom}. 

We first show $\Hom_{\dcat[*]{\cat{E}}}(\susp \cat{U}^*(\cat{E}),\cat{V}^*_\ell(\cat{E}))=0$. Let $X \in \susp \cat{U}^*(\cat{E})$, $Y \in \cat{V}^*_\ell(\cat{E})$, and let $f \colon X \to Y$ a morphism in $\dcat[*]{\cat{E}}$. Without loss of generality we can assume that $f$ is a morphism of complexes, as $\cat{U}^*(\cat{E})$ and $\cat{V}^*_\ell(\cat{E})$ are closed under quasi-isomorphisms. Since $X$ is $\cat{E}$-acyclic in non-negative degrees, there is an $\cat{E}$-quasi-isomorphism
\begin{equation*}
\begin{tikzcd}
\cdots \ar[r] & X^{-2} \ar[r,"{d_X^{-2}}"] \ar[d,"="] & \ker(d_X^{-1}) \ar[r] \ar[d] & 0 \ar[r] \ar[d] & \cdots \\
\cdots \ar[r] & X^{-2} \ar[r,"{d_X^{-2}}"] & X^{-1} \ar[r,"{d_X^{-1}}"] & X^0 \ar[r] & \cdots \nospacepunct{.}
\end{tikzcd}
\end{equation*}
Hence we may assume that $X$ is concentrated in negative degrees. We apply \cref{ind_mor_nullhptc} to the brutal truncation $f^{\leqslant 0} \colon X = X^{\leqslant 0} \to Y^{\leqslant 0}$ to obtain an $\cat{E}$-quasi-isomorphism $g \colon W \to X$ such that $f^{\leqslant 0} g$ is null-homotopic. Hence $fg$ is null-homotopic and $f$ is zero in $\dcat{\cat{E}}$. 

Let $X \in (\susp \cat{U}^*(\cat{E}))^\perp$ in $\dcat[*]{\cat{E}}$. For $n \leq -1$, let $a \colon A \to X^{n}$ be a morphism with $d_X^{n} a = 0$. We can view $a$ as a chain map $\hat{a} \colon \susp^{-n} A \to X$. As $\susp^{-n} A \in \susp \cat{U}^*(\cat{E})$ for $n \leq -1$, the morphism $\hat{a}$ is zero in $\dcat[*]{\cat{E}}$, meaning that there exists an $\cat{E}$-quasi-isomorphism $g \colon Y \to \susp^{-n} A$ such that $\hat{a} g$ is null-homotopic. The complex $W \colonequals \cone(g)$ is $\cat{E}$-acyclic and we let $j \colon \susp^{-n} A \to W$ be the canonical morphism. As the homotopy category is triangulated, there exists a morphism $h \colon W \to X$ such that $hj$ is homotopic to $\hat{a}$; this means there exists a morphism $\sigma \colon A \to X^{n-1}$ such that $\hat{a}^{n} - h^{n} j^{n} = d_X^{n-1} \sigma$. 

We consider the following diagram
\[
\begin{tikzcd}[column sep=large]
& & A \ar[d, "j^{n}"] \ar[rd,"0"] & \\
\cdots \ar[r] & W^{n-1} \ar[d] \ar[r, "d^{n-1}_W"] & W^{n} \ar[d,"h^{n}"] \ar[r,"d_W^{n}"] & W^{n+1} \ar[d] \ar[r] & \cdots \\
\cdots \ar[r] & X^{n-1} \ar[r,"d^{n-1}_X"] & X^{n} \ar[r, "d^{n}_X"] & X^{n+1} \ar[r] & \cdots \nospacepunct{.}
\end{tikzcd}
\]
As $W$ is $\cat{E}$-acyclic in every degree, the differential $d_W^{n-1}$ factors as $i p$ with $p$ an $\cat{E}$-deflation and $i = \ker(d_W^{n}) \colon K \to W^{n}$ an $\cat{E}$-inflation. As $d_W^{n} j^{n} = 0$ there is a unique morphism $u \colon A \to K$ such that $j^{n} = i u$. As $p$ is an $\cat{E}$-deflation, we take the pullback of $p$ along $u$. This gives an $\cat{E}$-deflation $q \colon B \to A$ and a morphism $v \colon B \to W^{n-1}$ such that $uq = pv$. This yields
\begin{equation*}
h^{n} j^{n} q = h^{n} i u q = h^{n} i p v = h^{n} d_W^{n-1} v = d_X^{n-1} h^{n-1} v\,.
\end{equation*}
Combining this with the fact that $hj$ is homotopic to $\hat{a}$ yields
\begin{equation*}
aq = \hat{a}^{n} q = (h^{n} j^{n} + d_X^{n-1} \sigma) q = d_X^{n-1} (h^{n-1} v + \sigma q)\,.
\end{equation*}
Since this holds for any $a$ and any $n \geq 1$, we have shown that $X$ is left $\ext{\cat{E}}$-acyclic in negative degrees. Hence $X \in \cat{V}^*_\ell(\cat{E})$. 
\end{proof}

\begin{remark}
We do not expect ${}^\perp \cat{V}^*_\ell(\cat{E}) = \susp \cat{U}^*(\cat{E})$ to hold in general. Although it is not clear which complexes are contained in ${}^\perp \cat{V}^*_\ell(\cat{E})$ besides $\susp \cat{U}^*(\cat{E})$, the subcategory $\susp \cat{U}^*_r(\cat{E})$ is not contained in ${}^\perp \cat{V}^*_\ell(\cat{E})$ in general: Let $\cat{E}$ be a weakly idempotent complete exact category and $L \to M \to N$ a kernel-cokernel pair in $\cat{E}$. The complex
\begin{equation*}
X \colonequals (\cdots \to 0 \to L \to M \to N \to 0 \to \cdots)
\end{equation*}
with $M$ in degree 0 is left $\ext{\cat{E}}$-acyclic in non-positive degrees and right $\ext{\cat{E}}$-acyclic in non-negative degrees; that is $X \in \cat{U}^*_r(\cat{E}) \cap \cat{V}^*_\ell(\cat{E})$. However, the identity morphism $\id_X$ is zero in $\dcat[*]{\cat{E}}$ if and only if $L \to M \to N$ is an $\cat{E}$-conflation.
\end{remark}

Finally, we provide conditions when $(\cat{U}^*(\cat{E}),\cat{V}^*_\ell(\cat{E}))$ is a t-structure. 
The \emph{$\ext{\cat{E}}$-kernel} of a morphism $M \to N$ in an exact category $\cat{E}$ is a morphism $L \to M$ such that $L \to M \to N$ is left $\ext{\cat{E}}$-acyclic. As the heart of a t-structure is abelian, the following result recovers \cite[Proposition~6]{Rump:2021}.

\begin{proposition} \label{t_str_iff_Ext_kernel}
Let $\cat{E}$ be an exact category. Then for any $* \in \{\emptyset,-\}$, the pair $(\cat{U}^*(\cat{E}),\cat{V}^*_\ell(\cat{E}))$ is a t-structure on $\dcat[*]{\cat{E}}$ if and only if every morphism in $\cat{E}$ has an $\ext{\cat{E}}$-kernel. 
\end{proposition}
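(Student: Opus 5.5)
The plan is to use \cref{ext_acyc_t_pair}, which already gives the closure properties, $\susp\cat{U}^*(\cat{E})\subseteq\cat{U}^*(\cat{E})$ and $(\susp\cat{U}^*(\cat{E}))^\perp=\cat{V}^*_\ell(\cat{E})$, and in particular the orthogonality $\Hom(\susp\cat{U}^*(\cat{E}),\cat{V}^*_\ell(\cat{E}))=0$. So the pair is a t-structure exactly when every $X\in\dcat[*]{\cat{E}}$ admits a triangle
\[
A\to X\to B\to\susp A \quad\text{with } A\in\susp\cat{U}^*(\cat{E}),\ B\in\cat{V}^*_\ell(\cat{E}).
\]
Both directions therefore reduce to constructing, or exploiting, such truncation triangles.

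\emph{Sufficiency.} Assume every morphism has an $\ext{\cat{E}}$-kernel, and fix $X$.
\begin{enumerate}
\item Keep $X$ in degrees $\geq 0$. Choose an $\ext{\cat{E}}$-kernel $K^{-1}\to X^0$ of $d_X^0$, then an $\ext{\cat{E}}$-kernel $K^{-2}\to K^{-1}$ of $K^{-1}\to X^0$, and so on inductively.
\item This produces a complex $B=(\cdots\to K^{-2}\to K^{-1}\to X^0\to X^1\to\cdots)$ which is left $\ext{\cat{E}}$-acyclic in every negative degree by construction, so $B\in\cat{V}^*_\ell(\cat{E})$. For $*=-$ it is still right bounded.
\item To map $X$ to $B$, apply the argument of \cref{induced_mor_extresn} to the brutal truncations in degrees $\leq 0$. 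It extends $\id_{X^0}$ after replacing $X$ by an $\cat{E}$-quasi-isomorphic complex $W$, and the result glues with the identity in positive degrees to a chain map $\varphi\colon W\to B$.
\item Set $A=\susp^{-1}\cone(\varphi)$; it remains to show $A\in\susp\cat{U}^*(\cat{E})$.
\end{enumerate}
In degrees $\geq 1$ the cone is the cone of an identity, hence $\cat{E}$-acyclic. The real content is acyclicity at the boundary degrees $-1$ and $0$ of $\cone(\varphi)$, where $K^{-1}\to X^0$ and $X^{-1}\to X^0$ interact.

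I expect this boundary acyclicity to be the main obstacle. My first attempt will be to choose the lift so that, in the pullback construction of \cref{induced_mor_extresn}, the relevant sequence at degree $0$ is built from $\cat{E}$-conflations, as with the sequences $P^n\to W^{n+1}\oplus X^n\to P^{n+1}$ there. If that fails, I will instead verify membership $\cone(\varphi)\in\cat{U}$ only up to isomorphism in $\dcat{\cat{E}}$, using that $\cat{U}^*(\cat{E})$ is closed under extensions, and split $\varphi$ through an intermediate complex.

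\emph{Necessity.} Assume the pair is a t-structure and let $f\colon M\to N$ be a morphism in $\cat{E}$. Regard it as the complex $X=(M\xrightarrow{f}N)$ with $N$ in degree $0$, and take its truncation triangle $A\to X\to B$.
\begin{enumerate}
\item Represent $B\in\cat{V}^*_\ell(\cat{E})$ by a complex left $\ext{\cat{E}}$-acyclic in negative degrees, with the map $X\to B$ a roof through a quasi-isomorphism.
\item Use that $A$ is $\cat{E}$-acyclic in degrees $\geq -1$ to replace $A$ by a complex concentrated in degrees $\leq -2$, as in the proof of \cref{ext_acyc_t_pair}. Then $X\to B$ is an isomorphism on degree-$0$ data and induces a comparison $B^{-1}\to M$, up to deflation.
\item The candidate $\ext{\cat{E}}$-kernel of $f$ is the composite $L\to M$ obtained from $B^{-1}$ (after such a deflation).
\item To check it, take $a\colon A'\to M$ with $fa=0$. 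View $a$ as a map $\susp A'\to X$, i.e.\ $A'$ placed in degree $-1$. Compose it to $B$ and use left $\ext{\cat{E}}$-acyclicity of $B$ in degree $-1$ to obtain the required deflation and factorization.
\end{enumerate}
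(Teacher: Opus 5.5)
Your reduction to the existence of truncation triangles is fine. However, both directions are shifted by one degree, and in the sufficiency direction this shift makes the construction fail.

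\textbf{Sufficiency.} You take an $\ext{\cat{E}}$-kernel of $d_X^0$, so your $B$ is left $\ext{\cat{E}}$-acyclic in all degrees $\leq 0$. That is, $B\in\susp^{-1}\cat{V}^*_\ell(\cat{E})=(\cat{U}^*(\cat{E}))^{\perp}$ by \cref{ext_acyc_t_pair}. In the abelian case your $B$ models $\tau^{\geq 1}X$ rather than $\tau^{\geq 0}X$, and its fibre is $\tau^{\leq 0}X$, which does not lie in $\susp\cat{U}^*(\cat{E})$.

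Concretely, take $X=N$, a non-zero object of $\cat{E}$ placed in degree $0$. Then $X\in\cat{U}^*(\cat{E})$, so the map $X\to B$ is zero for every choice of kernels. Hence $A\cong X\oplus\susp^{-1}B$, and $A\in\susp\cat{U}^*(\cat{E})$ would force $N\in\susp\cat{U}^*(\cat{E})$. This is impossible, since $N\in\cat{V}^*_\ell(\cat{E})$ and $\id_N\neq 0$ in $\dcat[*]{\cat{E}}$. For instance, choosing $\id_N$ and then $0$ as kernels gives $B=(N\xrightarrow{\id}N)\cong 0$ and $A\cong N$.

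So the acyclicity of $\cone(\varphi)$ at degree $-1$, which you flag as the main obstacle, is simply false in general. Neither of your fallbacks can repair it. Moreover, \cref{induced_mor_extresn} does not apply as you state it, because $d_X^{-1}$ factors through $K^{-1}\to X^0$ only after a deflation.

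The fix is to start one degree lower:
\begin{itemize}
\item Keep $X$ in degrees $\geq -1$.
\item Let $V^{-2}\to X^{-1}$ be an $\ext{\cat{E}}$-kernel of $d_X^{-1}$, and iterate.
\item Now \cref{induced_mor_extresn} applies verbatim with $f^{-1}=f^0=\id$.
\item The cone of $W\to V$ is the cone of an identity in degrees $\geq -1$, hence $\cat{E}$-acyclic there.
\end{itemize}
Therefore $\susp^{-1}\cone(W\to V)\in\susp\cat{U}^*(\cat{E})$, and there is no boundary issue at all. This is the paper's argument.

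\textbf{Necessity.} Here $A\in\susp\cat{U}^*(\cat{E})$ is $\cat{E}$-acyclic only in degrees $\geq 0$. So you may replace $A$ by a complex with $A^n=0$ for $n\geq 0$, but not by one concentrated in degrees $\leq -2$. If you could, the cone below would read $0\to M\xrightarrow{f}N$ at degree $-1$, forcing $f$ to be a monomorphism by \cref{mono_ext_acyc_hom_acyc}.

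Your candidate built from $B^{-1}$ is also misplaced. A (roof of) chain map(s) $X\to B$ gives maps $M\to B^{-1}$, not a comparison $B^{-1}\to M$. With the correct shift you need no roof at all:
\[
\cone(A\to X)=(\cdots\to A^{-2}\to A^{-1}\to M\xrightarrow{f}N\to 0\to\cdots)\cong B,
\]
with $M$ in degree $-1$. Left $\ext{\cat{E}}$-acyclicity in a fixed degree is invariant under isomorphism in $\dcat[*]{\cat{E}}$ by \cref{ext_acyc_kcat_dcat}. Hence $A^{-1}\to M$ is an $\ext{\cat{E}}$-kernel of $f$. Your step (4) is the right idea once $B$ is represented by this cone.
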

\begin{proof}
We assume every morphism in $\cat{E}$ has an $\ext{\cat{E}}$-kernel. Let $X \in \dcat[*]{\cat{E}}$. We define a complex $V \in \cat{V}^*_\ell(\cat{E})$ as follows: Let $V^n = X^n$ and $d_V^n = d_X^n$ for $n \geq -1$. For $n < -1$ let $d_V^n$ be the $\ext{\cat{E}}$-kernel of $d_V^{n+1}$. By construction $V \in \cat{V}^*_\ell(\cat{E})$ and we have a commutative diagram
\begin{equation*}
\begin{tikzcd}
\cdots \ar[r] & X^{-3} \ar[r,"d_X^{-3}"] & X^{-2} \ar[r,"d_X^{-2}"] & X^{-1} \ar[r,"d_X^{-1}"] \ar[d,"="] & X^0 \ar[r,"d_X^0"] \ar[d,"="] & X^1 \ar[r] \ar[d,"="] & \cdots \\
\cdots \ar[r] & V^{-3} \ar[r,"d_V^{-3}"] & V^{-2} \ar[r,"d_V^{-2}"] & V^{-1} \ar[r,"d_V^{-1}"] & V^0 \ar[r,"d_V^0"] & V^1 \ar[r] & \cdots \nospacepunct{.}
\end{tikzcd}
\end{equation*}
Applying \cref{induced_mor_extresn} to this diagram, there exists an $\cat{E}$-quasi-isomorphism $g \colon W \to X$ and a chain map $f \colon W \to V$ extending this diagram; this means $f^n = g^n = \id$ for $n \geq -1$. By construction, the mapping cone of $f$ is $\cat{E}$-acyclic in non-positive degrees, hence $\cone(f) \in \susp^2 \cat{U}^*(\cat{E})$. It remains to observe that $X \xleftarrow{\sim} W \to V$ is a morphism in $\dcat[*]{\cat{E}}$. Hence $\susp^{-1} \cone(f) \to X \to V \to \cone(f)$ is the desired exact triangle in $\dcat[*]{\cat{E}}$.

Now assume that $(\cat{U}(\cat{E}),\cat{V}_\ell(\cat{E}))$ is a t-structure. Let $f \colon L \to M$ be a morphism in $\cat{E}$. We consider the complex $X$ with $f$ the differential in degree $-1$ and zero otherwise. Then there exists an exact triangle $U \to X \to V \to \susp U$ with $U \in \susp \cat{U}(\cat{E})$ and $V \in \cat{V}_\ell(\cat{E})$. Without loss of generality we may assume $U^n = 0$ for $n \geq 0$. Then
\begin{equation*}
\cone(U \to X) = (\cdots \to U^{-2} \xrightarrow{-d_U^{-2}} U^{-1} \to L \xrightarrow{f} M \to 0 \to \cdots) \cong V \in \cat{V}_\ell(\cat{E})\,.
\end{equation*}
Hence $U^{-1} \to L$ is an $\ext{\cat{E}}$-kernel of $f$. 
\end{proof}

\subsection{The heart} \label{sec:heart}

Let $\cat{E}$ be an exact category. By \cite{Dyer:extrinotes}, the triangulated structure of $\dcat[*]{\cat{E}}$ induces an exact structure on $\cat{U}^*(\cat{E}) \cap \cat{V}^*_\ell(\cat{E})$. Explicitly, a sequence $L \to M \to N$ is a conflation if and only if it fits into an exact triangle $L \to M \to N \to \susp L$ in $\dcat[*]{\cat{E}}$. We call this exact structure the \emph{admissible exact structure (in $\dcat[*]{\cat{E}}$)}. 

We observe
\begin{equation*}
\cat{U}(\cat{E}) \cap \cat{V}_\ell(\cat{E}) = \cat{U}^-(\cat{E}) \cap \cat{V}^-_\ell(\cat{E}) \quad \text{and} \quad \cat{U}^+(\cat{E}) \cap \cat{V}^+_\ell(\cat{E}) = \cat{U}^b(\cat{E}) \cap \cat{V}^b_\ell(\cat{E})
\end{equation*}
as the boundedness to the right has no effect on the intersection.

\begin{definition}
Let $\cat{E}$ be an exact category. We denote by
\begin{enumerate}
\item $\lheart{\cat{E}}$ the exact category whose underlying additive category is $\cat{U}(\cat{E}) \cap \cat{V}_\ell(\cat{E})$ and that is equipped with the admissible exact structure; this means $\lheart{\cat{E}}$ contains the complexes of $\dcat{\cat{E}}$ that are left $\ext{\cat{E}}$-acyclic in negative degrees, and $\cat{E}$-acyclic in positive degrees;
\item $\lheart[b]{\cat{E}}$ the exact category whose underlying additive category is $\cat{U}^b(\cat{E}) \cap \cat{V}^b_\ell(\cat{E})$ and that is equipped with the admissible exact structure; this means $\lheart[b]{\cat{E}}$ contains the bounded complexes which are $\cat{E}$-acyclic in positive degrees and degrees $< n$ for some integer $n \leq 0$, and left $\ext{\cat{E}}$-acyclic in negative degrees.
\end{enumerate}
We denote by $\rheart{\cat{E}}$ and $\rheart[b]{\cat{E}}$ the dual constructions.
\end{definition}

We obtain a sequence of fully exact subcategories
\begin{equation*}
\cat{E} \subseteq \lheart[b]{\cat{E}} \subseteq \lheart{\cat{E}}\,.
\end{equation*}

\begin{lemma} \label{heart_resolving}
Let $\cat{E}$ be an exact category. Then 
\begin{enumerate}
\item \label{heart_resolving:all} $\cat{E} \subseteq \lheart{\cat{E}}$ is a resolving subcategory; 
\item \label{heart_resolving:infty} $\cat{E} \subseteq \lheart[b]{\cat{E}}$ is a finitely resolving subcategory. 
\end{enumerate}
\end{lemma}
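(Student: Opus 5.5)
The plan is to verify directly the three requirements: $\cat{E}$ is a fully exact subcategory of $\lheart{\cat{E}}$, axiom \cref{resolving:generator}, and axiom \cref{resolving:deflationclsd}. Throughout, $\cat{E}$ is embedded as complexes concentrated in degree $0$.

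\emph{Full exactness.} The embedding $\cat{E} \to \dcat{\cat{E}}$ is fully faithful, and every $\cat{E}$-conflation $L \to M \to N$ gives an exact triangle $L \to M \to N \to \susp L$. So the embedding into $\lheart{\cat{E}}$ with its admissible structure is exact and full.

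Conversely, let $L \to M \to N \to \susp L$ be an exact triangle with $L, N \in \cat{E}$ and $M \in \lheart{\cat{E}}$. The connecting morphism lies in $\Hom_{\dcat{\cat{E}}}(N,\susp L)$. I would use the standard identification of this group with the Yoneda $\Ext^1_{\cat{E}}(N,L)$ (Neeman's construction of $\dcat{\cat{E}}$). Then the morphism is realised by an $\cat{E}$-conflation $L \to E \to N$, and the triangle is isomorphic to the one coming from that conflation, so $M \cong E \in \cat{E}$. I expect this to be the main point needing care, since it relies on that identification rather than on the lemmas above. A fallback is to compute $M$ as $\susp^{-1}\cone(N \to \susp L)$ on a roof $N \xleftarrow{\sim} W \to \susp L$, and use that $M$ is $\cat{E}$-acyclic in positive degrees to get a deflation.

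\emph{Axiom \cref{resolving:generator}.} Let $X \in \lheart{\cat{E}}$. Since $X$ is $\cat{E}$-acyclic in degrees $\geq 1$, I would factor $d_X^0$ through the $\cat{E}$-deflation $X^0 \to K^1$ and replace $X$ by the quasi-isomorphic complex $\cdots \to X^{-1} \to Z^0 \to 0$, where $Z^0 = \ker(X^0 \to K^1) \in \cat{E}$. This is the dual of the truncation used in the proof of \cref{ext_acyc_t_pair}. So we may assume $X$ is concentrated in non-positive degrees.

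The stalk inclusion $X^0 \to X$ is then a chain map. It fits into a triangle $Y \to X^0 \to X \to \susp Y$, where $Y = (\cdots \to X^{-2} \to X^{-1})$ has $X^{-1}$ in degree $0$. The complex $Y$ is left $\ext{\cat{E}}$-acyclic in negative degrees, because these correspond to the degrees $\leq -2$ of $X$. It is also concentrated in non-positive degrees, so $Y \in \lheart{\cat{E}}$. Hence $X^0 \to X$ is a deflation in the admissible exact structure. In the bounded case $Y$ is again bounded, with length one less than $X$. Iterating therefore gives a finite resolution, which yields the finitely resolving statement in \cref{heart_resolving:infty}.

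\emph{Axiom \cref{resolving:deflationclsd}.} Let $L \to M \to N \to \susp L$ be a triangle with $L \in \lheart{\cat{E}}$ and $M, N \in \cat{E}$. Then $L \cong \susp^{-1}\cone(M \xrightarrow{g} N)$, which is the complex $M \xrightarrow{g} N$ placed in degrees $0$ and $1$. Since $L \in \cat{U}(\cat{E})$, this complex is $\cat{E}$-acyclic in degree $1$. The factorisation in the definition of $\cat{E}$-acyclicity then gives a deflation $M \to C$ and an inflation $C \to N$ whose cokernel is $0$, so $C \cong N$ and $g$ is an $\cat{E}$-deflation. Its kernel $K$ lies in $\cat{E}$, and $L \cong K$. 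Hence $\cat{E}$ is deflation-closed, completing both parts.
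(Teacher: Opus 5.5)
Your proposal is correct and follows the paper's own proof. You use the same reduction to complexes in non-positive degrees, the same brutal-truncation triangle $\susp^{-1}X^{\leqslant -1}\to X^0\to X\to X^{\leqslant -1}$ for \cref{resolving:generator}, the same cone computation showing $M\to N$ is an $\cat{E}$-deflation for \cref{resolving:deflationclsd}, and the same splicing of truncation conflations in the bounded case. The one addition is your check that $\cat{E}\subseteq\lheart{\cat{E}}$ is fully exact by identifying $\Hom_{\dcat{\cat{E}}}(N,\susp L)$ with Yoneda extensions; the paper asserts this before the lemma without proof.
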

\begin{proof}
We first show that $\cat{E}$ is resolving in $\lheart{\cat{E}}$. Let $X \in \lheart{\cat{E}}$. We may assume that $X$ is concentrated in non-positive degrees. Brutal truncation yields the exact triangle
\[
\susp^{-1} X^{\leqslant -1} \to X^0 \to X \to X^{\leqslant -1}\,.
\]
As $\susp^{-1} X^{\leqslant -1} \in \lheart{\cat{E}}$ and $X^0 \in \cat{E} \subseteq \lheart{\cat{E}}$, this induces an $\lheart{\cat{E}}$-conflation. Therefore $X^0 \to X$ is an $\lheart{\cat{E}}$-deflation and \cref{resolving:generator} is satisfied.

Let $X \xrightarrow{i} M \xrightarrow{p} N$ be an $\lheart{\cat{E}}$-conflation with $M,N \in \cat{E}$. This means there is an exact triangle $X \xrightarrow{i} M \xrightarrow{p} N \to \susp X$ in $\dcat{\cat{E}}$. Hence 
\begin{equation*}
X \cong \susp^{-1} \cone(p) = (\cdots \to 0 \to M \xrightarrow{-p} N \to 0 \to \cdots) \in \lheart{\cat{E}}\,,
\end{equation*}
where $M$ and $N$ lie in degrees 0 and 1, respectively. As the complex is $\cat{E}$-acyclic in degree 1, the morphism $p$ is an $\cat{E}$-deflation. In particular, $p$ has a kernel in $\cat{E}$ and $X \cong \ker(p) \in \cat{E}$. Hence \cref{resolving:deflationclsd} holds. This finishes the proof for \cref{heart_resolving:all}. 

As $\lheart[b]{\cat{E}}$ is a fully exact subcategory of $\lheart{\cat{E}}$, $\cat{E}$ is a resolving subcategory of $\lheart[b]{\cat{E}}$. By definition every object in $\lheart[b]{\cat{E}}$ is isomorphic to a complex
\begin{equation*}
X = (\cdots \to 0 \to X^{n} \to X^{n+1} \to \cdots X^{-1} \to X^0 \to 0 \to \cdots)
\end{equation*}
for some $n \leq 0$ that is left $\ext{\cat{E}}$-acyclic in negative degrees. As $X^{\leqslant i-1} \to \susp^{-i} X^{i} \to X^{\leqslant i}$ is an $\lheart[b]{\cat{E}}$-conflation for every $0 \geq i \geq n$, the sequence
\begin{equation*}
\cdots \to 0 \to X^{n} \to X^{n+1} \to \cdots X^{-1} \to X^0 \to X
\end{equation*}
is $\lheart[b]{\cat{E}}$-acyclic with $X^{i} \in \cat{E}$ for any $0 \geq i \geq n$. So $\cat{E} \subseteq \lheart[b]{\cat{E}}$ is finitely resolving.
\end{proof}

\begin{proposition} \label{rcat_same_heart}
Let $\cat{F}$ be an exact category. 
\begin{enumerate}
\item If $\cat{E} \subseteq \cat{F}$ is a resolving subcategory, then there is an equivalence $\lheart{\cat{E}} \to \lheart{\cat{F}}$ of exact categories.
\item If $\cat{E} \subseteq \cat{F}$ is a finitely resolving subcategory, then there is an equivalence $\lheart[b]{\cat{E}} \to \lheart[b]{\cat{F}}$ of exact categories. 
\end{enumerate}
\end{proposition}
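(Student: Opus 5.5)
The plan is to use the triangle functor $\dcat{\cat{E}} \to \dcat{\cat{F}}$ induced by the exact inclusion $\cat{E} \subseteq \cat{F}$. For a resolving (resp.\ finitely resolving) subcategory this functor is a triangle equivalence on the unbounded (resp.\ bounded) derived categories by \cite{Henrard/vanRoosmalen:2020c}; for $\dcat[-]{}$ the classical resolution argument via \cref{eq:resolving_resn} gives it directly. Since the exact structures on $\lheart{\cat{E}}$ and $\lheart{\cat{F}}$ are the admissible ones induced from exact triangles, a triangle equivalence that restricts to an equivalence $\cat{U}(\cat{E}) \cap \cat{V}_\ell(\cat{E}) \to \cat{U}(\cat{F}) \cap \cat{V}_\ell(\cat{F})$ automatically gives an exact equivalence $\lheart{\cat{E}} \to \lheart{\cat{F}}$. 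So the whole proof reduces to two inclusions:
\begin{equation*}
\sF(\cat{U}(\cat{E}) \cap \cat{V}_\ell(\cat{E})) \subseteq \cat{U}(\cat{F}) \cap \cat{V}_\ell(\cat{F})
\end{equation*}
and the reverse, up to isomorphism.

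For the forward inclusion, take $X \in \lheart{\cat{E}}$ and represent it, as in the proof of \cref{heart_resolving}, by a complex concentrated in non-positive degrees. This complex has entries in $\cat{E}$, so it is trivially $\cat{F}$-acyclic in positive degrees. By \cref{resolving_ext_acyc}, left $\ext{\cat{E}}$-acyclicity in negative degrees is the same as left $\ext{\cat{F}}$-acyclicity there. Hence $X \in \lheart{\cat{F}}$.

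For essential surjectivity, take $Y \in \lheart{\cat{F}}$, again concentrated in non-positive degrees. I will replace $Y$, up to $\cat{F}$-quasi-isomorphism, by a complex with entries in $\cat{E}$ that remains left Ext-acyclic in negative degrees. I would try a Cartan--Eilenberg-type construction: a double complex or totalization that resolves each $Y^n$ by \cref{eq:resolving_resn}. Alternatively, I would build inductively from degree $0$ downwards, using (R1) to cover pullbacks by objects of $\cat{E}$, in the style of the proofs of \cref{induced_mor_extresn,ind_mor_nullhptc}. The cone of the comparison map $E \to Y$ should be $\cat{F}$-acyclic. Then \cref{Extacyc_cone_then}, together with \cref{ext_acyc_kcat_dcat}, shows that $E$ is left $\ext{\cat{F}}$-acyclic in negative degrees, and \cref{resolving_ext_acyc} upgrades this to left $\ext{\cat{E}}$-acyclicity. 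I expect this step to be the main obstacle. If it works, it also shows directly that $\dcat[-]{\cat{E}} \to \dcat[-]{\cat{F}}$ is essentially surjective onto complexes bounded above. Full faithfulness there follows from the standard fact that $\cat{E}$-complexes bounded above are cofinal among quasi-isomorphisms into such complexes, using (R2) to ensure that cones of maps between $\cat{E}$-complexes being $\cat{F}$-acyclic implies they are $\cat{E}$-acyclic.

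For the finitely resolving case the same argument works in $\dbcat{}$. Here I use that each $Y^n$ has a finite resolution, so the constructed $E$ is bounded. The only other thing to check is that $E$ is $\cat{E}$-acyclic below some degree, which is inherited from $Y$ since cones of the comparison maps are acyclic. This recovers the bounded derived equivalence of \cite{Henrard/vanRoosmalen:2020c} for finitely resolving subcategories.
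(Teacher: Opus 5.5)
Your argument is essentially the paper's. It restricts the derived equivalence induced by $\cat{E} \subseteq \cat{F}$ to the hearts, gets the forward inclusion from \cref{resolving_ext_acyc}, and gets essential surjectivity by resolving. Your inductive construction is precisely \cref{resolving_ext_resn,finitely_resolving_ext_resn}: you cover pullbacks by objects of $\cat{E}$ via \cref{resolving:generator}, then transfer left Ext-acyclicity with \cref{Extacyc_cone_then} and \cref{resolving_ext_acyc}. The paper's proof of this proposition compresses that construction into a reference to \cref{eq:resolving_resn}. In the bounded case with this construction, it is the last pullback object that needs a finite resolution, not the $Y^n$.

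One claim must be corrected. For a resolving subcategory, \cite[Theorem~3.11]{Henrard/vanRoosmalen:2020c} gives an equivalence $\dcat[-]{\cat{E}} \to \dcat[-]{\cat{F}}$, not an equivalence of unbounded derived categories, and the unbounded statement is false in general. Take $\Lambda = k[T]/(T^2)$ and $\cat{E} = \add(\Lambda) \subseteq \lmod{\Lambda}$ as in \cref{ex_maxl_nonneg_not_abelian}. The complex $\cdots \xrightarrow{T} \Lambda \xrightarrow{T} \Lambda \xrightarrow{T} \cdots$ is not contractible, so it is nonzero in $\dcat{\cat{E}} = \kcat{\add(\Lambda)}$. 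Yet it is acyclic in $\lmod{\Lambda}$. Unbounded equivalences need a uniform bound on resolution length, cf.\ \cite[Theorem~4.1]{Henrard/vanRoosmalen:2020c}.

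This error is not fatal to your proof. Objects of both hearts are represented by bounded-above complexes, and $\cat{U}(\cat{E}) \cap \cat{V}_\ell(\cat{E}) = \cat{U}^-(\cat{E}) \cap \cat{V}^-_\ell(\cat{E})$. So the $\dcat[-]$-equivalence, which you also sketch and which the paper cites, is all that is needed.
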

\begin{proof}
Let $\cat{E} \subseteq \cat{F}$ by resolving. By \cite[Theorem~3.11]{Henrard/vanRoosmalen:2020c}, the inclusion $\cat{E} \subseteq \cat{F}$ lifts to an equivalence $\dcat[-]{\cat{E}} \to \dcat[-]{\cat{F}}$ of triangulated categories. By \cref{resolving_ext_acyc}, this equivalence restricts to a functor $\lheart{\cat{E}} \to \lheart{\cat{F}}$. As the restriction is essentially surjective by \cref{eq:resolving_resn}, it is an equivalence. 

When $\cat{E} \subseteq \cat{F}$ is finitely resolving, then, by the same argument, the equivalence restricts to an equivalence $\lheart[b]{\cat{E}} \to \lheart[b]{\cat{F}}$. 
\end{proof}

From \cref{heart_resolving} we immediately obtain the following \namecref{heart_idempotent}.

\begin{corollary} \label{heart_idempotent}
Let $\cat{E}$ be an exact category. Then there are equivalences
\begin{equation*}
\lheart{\cat{E}} \to \lheart{\lheart{\cat{E}}} \quad \text{and} \quad \lheart[b]{\cat{E}} \to \lheart[b]{\lheart[b]{\cat{E}}}
\end{equation*}
of exact categories. \qed
\end{corollary}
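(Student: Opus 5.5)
The plan is to apply \cref{rcat_same_heart} to the inclusions provided by \cref{heart_resolving}. By \cref{heart_resolving}\cref{heart_resolving:all}, $\cat{E}$ is a resolving subcategory of the exact category $\cat{F} \colonequals \lheart{\cat{E}}$. This uses that $\lheart{\cat{E}}$ is an exact category with the admissible exact structure from \cite{Dyer:extrinotes}, and that $\cat{E}$ sits in it as a fully exact subcategory via the embedding into degree $0$. Applying \cref{rcat_same_heart}(1) to $\cat{E} \subseteq \lheart{\cat{E}}$ then gives an equivalence of exact categories $\lheart{\cat{E}} \to \lheart{\lheart{\cat{E}}}$.

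For the bounded version I would argue in the same way. By \cref{heart_resolving}\cref{heart_resolving:infty}, $\cat{E} \subseteq \lheart[b]{\cat{E}}$ is finitely resolving, so \cref{rcat_same_heart}(2) yields the equivalence $\lheart[b]{\cat{E}} \to \lheart[b]{\lheart[b]{\cat{E}}}$.

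The only point to check is that the hypotheses of \cref{rcat_same_heart} hold as stated. The ambient category must be an exact category, and the subcategory must be fully exact and resolving in the sense of \cref{resolving:generator,resolving:deflationclsd}. \Cref{heart_resolving} provides exactly this. I would also confirm that \cref{rcat_same_heart} places no additional hypothesis on the ambient category, such as weak idempotent completeness. It does not: its proof uses only \cite[Theorem~3.11]{Henrard/vanRoosmalen:2020c} together with \cref{resolving_ext_acyc} and \cref{eq:resolving_resn}, and these hold for any resolving inclusion of exact categories. The functor in question is the restriction of the induced derived equivalence $\dcat[-]{\cat{E}} \to \dcat[-]{\lheart{\cat{E}}}$, so no further obstacle is expected.
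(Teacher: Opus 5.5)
Your proposal is correct and is exactly the paper's argument. The corollary is obtained by applying \cref{rcat_same_heart} to the resolving inclusion $\cat{E} \subseteq \lheart{\cat{E}}$ and the finitely resolving inclusion $\cat{E} \subseteq \lheart[b]{\cat{E}}$ supplied by \cref{heart_resolving}.
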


As $\cat{E}$ is resolving in $\lheart{\cat{E}}$ and finitely resolving in $\lheart[b]{\cat{E}}$ there are triangle equivalences
\begin{equation*}
\dcat[-]{\cat{E}} \to \dcat[-]{\lheart{\cat{E}}} \quad \text{and} \quad \dbcat{\cat{E}} \to \dbcat{\lheart[b]{\cat{E}}}
\end{equation*}
by \cite[Theorem~3.11]{Henrard/vanRoosmalen:2020c}. It is straightforward to check that a quasi-inverse is given by taking the total complex. 

\begin{lemma}
Let $\cat{E}$ be an exact category and let $X$ be a bounded complex over $\lheart[b]{\cat{E}}$. Then $X$ is $\lheart[b]{\cat{E}}$-acyclic in negative degrees if and only if the total complex of $X$ is left $\ext{\cat{E}}$-acyclic in negative degree. 
\end{lemma}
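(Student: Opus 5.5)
We plan to pass through the orthogonality description of \cref{ext_acyc_t_pair}, on both sides of the triangle equivalence $\dbcat{\cat{E}} \to \dbcat{\lheart[b]{\cat{E}}}$. By the discussion after \cref{heart_idempotent}, this equivalence is induced by the inclusion and has the total complex as quasi-inverse. Write $\cat{H} = \lheart[b]{\cat{E}}$ and $T = \operatorname{Tot}$.

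\emph{Step 1 (comparing the orthogonals).} The equivalence restricts to an equivalence $\cat{U}^b(\cat{E}) \to \cat{U}^b(\cat{H})$. For one inclusion: if $U \in \cat{U}^b(\cat{E})$, then after a quasi-isomorphism $U$ is concentrated in non-positive degrees, so it lies in $\cat{U}^b(\cat{H})$. For the other: a bounded complex $Y$ over $\cat{H}$ that is $\cat{H}$-acyclic in positive degrees can be replaced by one concentrated in non-positive degrees. To see this, we kill the top degree using that a bounded-above tail which is $\cat{H}$-acyclic splits into $\cat{H}$-conflations. Its total complex is then $\cat{E}$-acyclic in positive degrees, since each $Y^i$ is $\cat{E}$-acyclic in positive degrees and \cref{ConeExtacyclic} together with its $\cat{E}$-acyclic analogue glues these. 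Combining this with \cref{ext_acyc_t_pair}\cref{ext_acyc_t_pair:Hom} for both $\cat{E}$ and $\cat{H}$ gives
\[
T(X) \in \cat{V}^b_\ell(\cat{E}) \iff \Hom(\susp \cat{U}^b(\cat{E}), T(X)) = 0 \iff \Hom(\susp\cat{U}^b(\cat{H}), X) = 0 \iff X \in \cat{V}^b_\ell(\cat{H}).
\]
So the statement reduces to showing that, for bounded complexes over $\cat{H}$, being left $\ext{\cat{H}}$-acyclic in negative degrees is equivalent to being $\cat{H}$-acyclic in negative degrees.

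\emph{Step 2 (easy direction).} $\cat{H}$-acyclicity implies left $\ext{\cat{H}}$-acyclicity by \cref{connection_diff_acyclic}. This already gives: if $X$ is $\cat{H}$-acyclic in negative degrees, then $T(X)$ is left $\ext{\cat{E}}$-acyclic in negative degrees.

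\emph{Step 3 (converse, by induction from the bottom).} Let $X$ be bounded with lowest nonzero degree $m<0$, and left $\ext{\cat{H}}$-acyclic in negative degrees. At degree $m$ the sequence $0 \to X^m \to X^{m+1}$ is left $\ext{\cat{H}}$-acyclic, so $d^m$ is a monomorphism in $\cat{H}$ by \cref{mono_ext_acyc_hom_acyc}. We aim to show that $d^m$ is an $\cat{H}$-inflation, that is, that $C \colonequals \cone(d^m)$ lies in $\cat{H}$. Then $X$ is quasi-isomorphic to the complex $X'$ obtained by replacing $X^m \to X^{m+1}$ with $C$ in degree $m+1$. This complex $X'$ is still left $\ext{\cat{H}}$-acyclic in negative degrees: we can check this via Step 1 or directly via \cref{Extacyc_cone_then}. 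We then induct on the length, and $\cat{H}$-acyclicity of $X$ at degree $m+1$ follows from the factorization $X^m \infl C \to X^{m+2}$.

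\emph{Main obstacle.} The main obstacle is showing $C \in \cat{H}$. We have $C \in \cat{U}^b(\cat{E})$ automatically, so the issue is $C \in \cat{V}^b_\ell(\cat{E}) = (\susp \cat{U}^b(\cat{E}))^\perp$. Consider the long exact sequence
\[
\Hom(\susp U, X^{m+1}) \to \Hom(\susp U, C) \to \Hom(U, X^m) \to \Hom(U, X^{m+1}).
\]
The first term vanishes, so it suffices to show that $d^m_*$ is injective on $\Hom(U,-)$ for every $U \in \cat{U}^b(\cat{E})$. We would first take $U$ concentrated in non-positive degrees and represent a morphism $U \to X^m$ by a roof. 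Using the resolving property of \cref{heart_resolving} and \cref{ind_mor_nullhptc}, we would then reduce to morphisms out of $U^0 \in \cat{E} \subseteq \cat{H}$. There injectivity is exactly the monomorphism property of $d^m$ in $\cat{H}$, where the difference in the lower degrees of $U$ is absorbed by a homotopy into $\susp^{-1}$. This is the step where we expect the real work, and where the hypothesis of left $\ext{\cat{H}}$-acyclicity (rather than just $d^m$ being mono) may need to be fed in again.
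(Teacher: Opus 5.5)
Your reduction is sound and takes a different route from the paper. The step you flag as the main obstacle closes in a few lines, and it needs only that $d^m$ is a monomorphism in $\cat{H}$. Take $U\in\cat{U}^b(\cat{E})$ concentrated in non-positive degrees, with brutal truncation triangle $U^0\to U\to U^{\leqslant -1}\to\susp U^0$. Since $U^{\leqslant -1}\in\susp\cat{U}^b(\cat{E})$ and $X^m\in\cat{V}^b_\ell(\cat{E})$, \cref{ext_acyc_t_pair} gives $\Hom(U^{\leqslant -1},X^m)=0$. Hence restriction along $U^0\to U$ is injective on $\Hom(-,X^m)$. Now let $\phi\colon U\to X^m$ satisfy $d^m\phi=0$. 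Then $d^m$ kills the restriction of $\phi$ to $U^0\in\cat{E}\subseteq\cat{H}$, so that restriction vanishes because $d^m$ is a monomorphism in $\cat{H}$, and therefore $\phi=0$. Thus $\Hom(\susp U,C)=0$ for all $U$, so $C\in\cat{V}^b_\ell(\cat{E})$, hence $C\in\cat{H}$ and $d^m$ is an $\cat{H}$-inflation. You need no roofs, no \cref{ind_mor_nullhptc}, and no second use of left Ext-acyclicity. In fact this shows that every monomorphism of $\lheart[b]{\cat{E}}$ is an inflation, without weak idempotent completeness (compare \cref{char_lheart_mono,heart_idempotent}). Two small corrections. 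First, the factorization at degree $m+1$ is $X^{m+1}\to C\to X^{m+2}$, with $C\to X^{m+2}$ an $\cat{H}$-inflation by the inductive hypothesis for $X'$ when $m+1<0$. Second, the gluing in Step 1 is just extension-closure of $\cat{U}^b(\cat{E})$ from \cref{ext_acyc_t_pair}, not an $\cat{E}$-acyclic version of \cref{ConeExtacyclic}.

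The paper argues directly with the total complex and never passes to $\cat{V}^b_\ell(\cat{H})$. It inducts on the lowest degree $n-1$ of $X$. It uses that $X$ is $\cat{H}$-acyclic in negative degrees exactly when two things hold: $d^{n-1}$ is an $\cat{H}$-inflation (that is, $\cone(d^{n-1})\in\lheart[b]{\cat{E}}$), and the shortened complex $\tilde X$ with $\cone(d^{n-1})$ in degree $n$ is $\cat{H}$-acyclic in negative degrees. Since $\operatorname{Tot}(\tilde X)\cong\operatorname{Tot}(X)$, this reduces to complexes concentrated in non-negative degrees, where both sides hold by \cref{ConeExtacyclic}. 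In the paper, \cref{coaisle_same_for_lheart} is deduced afterwards, whereas your Step 1 proves it first. For the converse, the paper needs $\cone(d^{n-1})\in\lheart[b]{\cat{E}}$ knowing only that $\operatorname{Tot}(X)$ is left $\ext{\cat{E}}$-acyclic. It leaves this implicit; it follows from \cref{Extacyc_cone_then} applied to $\susp^{-n}\cone(d^{n-1})\to\operatorname{Tot}(X)\to\operatorname{Tot}(X^{\geqslant n+1})$. The paper's route is shorter and stays at the level of complexes over $\cat{E}$. Yours isolates the actual reason the converse holds, namely that monomorphisms in $\lheart[b]{\cat{E}}$ are inflations, and yields \cref{coaisle_same_for_lheart} along the way. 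The cost is relying on the full orthogonality of \cref{ext_acyc_t_pair} for both $\cat{E}$ and $\cat{H}$, and on the compatibility of $\dbcat{\cat{E}}\to\dbcat{\lheart[b]{\cat{E}}}$ with $\cat{U}^b$.
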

\begin{proof}
We use induction on $n \leq 0$ to show the claim for a complex $X$ with $X^{<n} = 0$. 

For $n = 0$, the claim holds by the construction of the total complex using the cone and \cref{ConeExtacyclic}. 

We assume the claim holds for some $n \leq 0$. Assume $X^{<n-1} = 0$. The complex $X$ is $\lheart[b]{\cat{E}}$-acyclic in degree $n$ if and only if the differential $d_X^{n-1}$ is an $\lheart[b]{\cat{E}}$-inflation if and only if $\cone(d_X^{n-1}) \in \lheart[b]{\cat{E}}$. Hence $X$ is $\lheart[b]{\cat{E}}$-acyclic in negative degrees if and only if
\begin{equation*}
\tilde{X} = (\cdots \to 0 \to \cone(d_X^{n-1}) \to X^{n+1} \to \cdots)
\end{equation*}
is a complex $\lheart[b]{\cat{E}}$ and is $\lheart[b]{\cat{E}}$-acyclic in negative degrees. The total complex of $\tilde{X}$ is isomorphic to the total complex of $X$. The claim now follows from induction.
\end{proof}

\begin{corollary} \label{coaisle_same_for_lheart}
Let $\cat{E}$ be an exact category. Then the equivalence $\dbcat{\cat{E}} \to \dbcat{\lheart[b]{\cat{E}}}$ restricts to an equivalence $\cat{V}^b_\ell(\cat{E}) \to \cat{V}^b(\lheart[b]{\cat{E}})$. \qed
\end{corollary}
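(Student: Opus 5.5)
The plan is to deduce the corollary from the preceding lemma via the total complex functor. Write $\Phi \colon \dbcat{\cat{E}} \to \dbcat{\lheart[b]{\cat{E}}}$ for the equivalence induced by the finitely resolving inclusion $\cat{E} \subseteq \lheart[b]{\cat{E}}$ (\cref{heart_resolving}, \cite[Theorem~3.11]{Henrard/vanRoosmalen:2020c}). Its quasi-inverse $\operatorname{Tot}$ sends a bounded complex over $\lheart[b]{\cat{E}}$, after each term is represented by a bounded complex over $\cat{E}$, to its total complex. Since $\Phi$ is an equivalence, it suffices to prove one statement: for $X \in \dbcat{\lheart[b]{\cat{E}}}$, we have $X \in \cat{V}^b(\lheart[b]{\cat{E}})$ if and only if $\operatorname{Tot}(X) \in \cat{V}^b_\ell(\cat{E})$. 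Both subcategories are closed under isomorphisms, by \cref{ext_acyc_kcat_dcat} and its analogue for $\lheart[b]{\cat{E}}$-acyclicity. Hence this gives that $\Phi$ and $\operatorname{Tot}$ restrict to mutually quasi-inverse functors between $\cat{V}^b_\ell(\cat{E})$ and $\cat{V}^b(\lheart[b]{\cat{E}})$.

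For the reduction, I first note that membership in $\cat{V}^b(\lheart[b]{\cat{E}})$ depends only on the isomorphism class in the derived category. I then replace $X$ by an isomorphic complex that is zero in large degrees and whose terms are complexes over $\cat{E}$ concentrated in non-positive degrees; this is possible because objects of $\lheart[b]{\cat{E}}$ are isomorphic to such complexes. In positive degrees, $\lheart[b]{\cat{E}}$-acyclicity imposes no condition on membership in $\cat{V}^b$. Similarly, left $\ext{\cat{E}}$-acyclicity of $\operatorname{Tot}(X)$ in negative degrees concerns only the part in degrees $\le 0$. So I split off the positive part by brutal truncation. The terms of $X$ in positive degrees sit in non-positive internal degrees, so after totalization they contribute to $\operatorname{Tot}(X)$ in negative degrees as well. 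To handle this, I use the canonical degree-$0$ factorisation via \cref{induced_mor_extresn} and \cref{Extacyc_cone_then}: left $\ext{\cat{E}}$-acyclicity in degree $n$ is invariant along a triangle whose third term is left $\ext{\cat{E}}$-acyclic in degrees $n$ and $n-1$. This reduces the problem to bounded complexes $X$ over $\lheart[b]{\cat{E}}$ that are concentrated in non-positive degrees.

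In that case the preceding lemma applies verbatim: $X$ is $\lheart[b]{\cat{E}}$-acyclic in negative degrees if and only if its total complex is left $\ext{\cat{E}}$-acyclic in negative degrees. This gives the required equivalence.

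The main obstacle I expect is the reduction step, namely showing that the positive-degree part of $X$ does not affect either condition. If the truncation argument becomes messy, the fallback is to use \cref{ext_acyc_t_pair}\,(3) on both sides. There $\cat{V}^b_\ell(\cat{E}) = (\susp\cat{U}^b(\cat{E}))^\perp$ and $\cat{V}^b(\lheart[b]{\cat{E}}) = (\susp\cat{U}^b(\lheart[b]{\cat{E}}))^\perp$, using that left $\ext{}$-acyclicity over $\lheart[b]{\cat{E}}$ in the relevant degrees matches $\lheart[b]{\cat{E}}$-acyclicity on this class. It then suffices to check that $\Phi$ identifies $\cat{U}^b(\cat{E})$ with $\cat{U}^b(\lheart[b]{\cat{E}})$, which is easier because $\cat{E}$-acyclicity in positive degrees is preserved under totalization by \cref{ConeExtacyclic}-type cone arguments.
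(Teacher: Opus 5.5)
Your main route is the paper's. The corollary follows directly from the preceding lemma, because the total complex is quasi-inverse to $\dbcat{\cat{E}} \to \dbcat{\lheart[b]{\cat{E}}}$ and both subcategories are closed under isomorphisms. Your truncation step is superfluous: that lemma is stated for arbitrary bounded complexes over $\lheart[b]{\cat{E}}$, and its base case $X^{<0}=0$ (iterated cones plus \cref{ConeExtacyclic}) already shows that the positive-degree part of $X$ only contributes something left $\ext{\cat{E}}$-acyclic in degrees $\le 0$, so \cref{induced_mor_extresn} plays no role and the fallback is not needed.
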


\subsection{Maximally non-negative subcategories of triangulated categories} 

We conclude this section by considering some of the properties of $\lheart[*]{\cat{E}}$, the dual construction $\rheart[*]{\cat{E}}$ and the pairs in \cref{ext_acyc_t_pair}. 

\begin{definition}
Let $\cat{T}$ be a triangulated category. A full subcategory $\cat{C}$ in $\cat{T}$ is called \emph{non-negative} if $\Hom_\cat{T}(\cat{C}, \susp^{<0} \cat{C})=0$. We say $\cat{C}$ is \emph{maximally non-negative} if for every non-negative $\cat{D} \subseteq \cat{T}$ we have $\cat{C} \subseteq \cat{D}$ implies $\cat{C}=\cat{D}$. 
\end{definition}

This property has previously considered by \cite{Jorgensen:2021}.

By construction, the subcategories $\lheart[*]{\cat{E}}$ and $\rheart[*]{\cat{E}}$ are non-negative in $\dcat[*]{\cat{E}}$. However, they need not be maximally non-negative in general. We have the following characterization of maximally non-negative subcategories of $\dbcat{\cat{E}}$. 

\begin{theorem} \label{char_maxl_nonneg}
Let $\cat{E}$ be a weakly idempotent complete exact category. The following are equivalent:
\begin{enumerate}
\item \label{char_maxl_nonneg:nonneg} $\cat{E}$ is maximally non-negative in $\dbcat{\cat{E}}$; 
\item \label{char_maxl_nonneg:heart} $\cat{E} = \lheart[b]{\cat{E}} = \rheart[b]{\cat{E}}$; 
\item \label{char_maxl_nonneg:aisle} $\cat{V}_\ell^b(\cat{E}) = \cat{V}^b(\cat{E})$ and $\cat{U}_r^b(\cat{E}) = \cat{U}^b(\cat{E})$; 
\item \label{char_maxl_nonneg:epimono} every monomorphism in $\cat{E}$ is an  $\cat{E}$-inflation and every epimorphism in $\cat{E}$ is an $\cat{E}$-deflation.
\end{enumerate}
\end{theorem}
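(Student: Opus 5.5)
I will prove the cycle of implications \cref{char_maxl_nonneg:nonneg}$\Rightarrow$\cref{char_maxl_nonneg:heart}$\Rightarrow$\cref{char_maxl_nonneg:epimono}$\Rightarrow$\cref{char_maxl_nonneg:aisle}$\Rightarrow$\cref{char_maxl_nonneg:heart}, and close the loop with \cref{char_maxl_nonneg:heart}$\Rightarrow$\cref{char_maxl_nonneg:nonneg}. Throughout, I use that $\cat{E}$ is weakly idempotent complete. This guarantees that the $\cat{E}$-acyclic complexes form a thick subcategory, so being $\cat{E}$-acyclic in a given degree is invariant under isomorphism in $\dbcat{\cat{E}}$.

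For \cref{char_maxl_nonneg:nonneg}$\Rightarrow$\cref{char_maxl_nonneg:heart}, the key observation is that $\lheart[b]{\cat{E}}$ is non-negative in $\dbcat{\cat{E}}$ and contains $\cat{E}$. Non-negativity holds because $\lheart[b]{\cat{E}} \subseteq \cat{U}^b(\cat{E})$, and for $n\ge 1$ we have $\susp^{-n}\cat{V}^b_\ell(\cat{E}) \subseteq \cat{V}^b_\ell(\cat{E})$ by \cref{ext_acyc_t_pair}. The orthogonality $(\susp\cat{U}^b)^\perp=\cat{V}^b_\ell$ then gives $\Hom(\lheart[b]{\cat{E}},\susp^{-n}\lheart[b]{\cat{E}}) = \Hom(\susp^{n}\lheart[b]{\cat{E}},\lheart[b]{\cat{E}})=0$. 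Maximality therefore forces $\cat{E}=\lheart[b]{\cat{E}}$, and the dual argument gives $\cat{E}=\rheart[b]{\cat{E}}$.

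For \cref{char_maxl_nonneg:heart}$\Rightarrow$\cref{char_maxl_nonneg:epimono}, let $f\colon L\to M$ be a monomorphism. By \cref{mono_ext_acyc_hom_acyc}, the complex $0\to L\xrightarrow{f} M\to 0$ (with $M$ in degree $0$) is left $\ext{\cat{E}}$-acyclic in negative degrees, so it lies in $\lheart[b]{\cat{E}}=\cat{E}$. Hence it is isomorphic in $\dbcat{\cat{E}}$ to some $E\in\cat{E}$. The mapping cone of the comparison is then $\cat{E}$-acyclic, which should force $f$ to be an $\cat{E}$-inflation: the complex $L\to M\oplus E'$ must be acyclic, and the split summand is removed using weak idempotent completeness. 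I expect to argue this more cleanly by noting that the complex is $\cat{E}$-acyclic in degree $-1$, being isomorphic to a stalk complex, and that this means $f$ is an inflation. Epimorphisms are handled dually via $\rheart[b]{\cat{E}}$.

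For \cref{char_maxl_nonneg:epimono}$\Rightarrow$\cref{char_maxl_nonneg:aisle}, take $X\in\cat{V}^b_\ell(\cat{E})$ bounded, and induct from the lowest nonzero degree $n<0$. At degree $n$ the complex is left $\ext{\cat{E}}$-acyclic with $0$ to the left, so $d^n$ is a monomorphism by \cref{mono_ext_acyc_hom_acyc}, hence an inflation. The key step is then to replace $X$ by the quasi-isomorphic complex with $\coker(d^n)$ in degree $n+1$. This complex still lies in $\cat{V}^b_\ell$ by \cref{ext_acyc_kcat_dcat}, and it is one term shorter. Iterating shows that $X$ is $\cat{E}$-acyclic in negative degrees; the statement for $\cat{U}^b_r$ is dual. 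The implication \cref{char_maxl_nonneg:aisle}$\Rightarrow$\cref{char_maxl_nonneg:heart} is then immediate, since $\lheart[b]{\cat{E}}=\cat{U}^b\cap\cat{V}^b$ consists of complexes acyclic away from degree $0$, which are isomorphic to objects of $\cat{E}$ by weak idempotent completeness.

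Finally, for \cref{char_maxl_nonneg:heart}$\Rightarrow$\cref{char_maxl_nonneg:nonneg}, let $\cat{D}\supseteq\cat{E}$ be non-negative and $X\in\cat{D}$. Then $\Hom(\susp^{n}\cat{E},X)=0$ for $n\ge1$. The aim is to show $X\in(\susp\cat{U}^b)^\perp=\cat{V}^b_\ell$, and dually $X\in{}^\perp(\susp^{-1}\cat{V}^b)=\cat{U}^b_r$, using the dual of \cref{ext_acyc_t_pair}. This is the main obstacle, since $\susp\cat{U}^b$ is not generated by shifts of $\cat{E}$ alone. I would instead use the proof technique of \cref{ext_acyc_t_pair}, which only tests against stalk complexes $\susp^{-n}A$ to establish left Ext-acyclicity; this requires just vanishing against $\susp^{k}\cat{E}$ for $k\ge1$. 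This gives $X\in\cat{V}^b_\ell\cap\cat{U}^b_r$. By \cref{char_maxl_nonneg:aisle}, which we already know is equivalent to \cref{char_maxl_nonneg:heart}, $X\in\cat{V}^b\cap\cat{U}^b\simeq\cat{E}$, so $\cat{D}=\cat{E}$.
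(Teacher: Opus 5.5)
Your proposal is correct and follows the paper's proof: (1)$\Rightarrow$(2) uses the non-negativity of $\lheart[b]{\cat{E}}$ and $\rheart[b]{\cat{E}}$, and (2)$\Rightarrow$(4)$\Rightarrow$(3)$\Rightarrow$(2) is exactly the argument of \cref{char_lheart_mono} and its dual. For the return to (1), the paper simply uses $(\susp^{>0}\cat{E})^\perp=(\susp\cat{U}^b(\cat{E}))^\perp=\cat{V}^b_\ell(\cat{E})$ from \cref{ext_acyc_t_pair}. The obstacle you name is not real: in $\dbcat{\cat{E}}$ every object of $\cat{U}^b(\cat{E})$ is isomorphic to a bounded complex in non-positive degrees, hence via brutal truncations an iterated extension of objects $\susp^{k}E$ with $k\geq 0$. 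Still, your observation that the proof of \cref{ext_acyc_t_pair} only tests against stalk complexes yields the same inclusion $\cat{D}\subseteq\cat{V}^b_\ell(\cat{E})\cap\cat{U}^b_r(\cat{E})$ and is equally valid.
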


The equivalence of \cref{char_maxl_nonneg:heart,char_maxl_nonneg:aisle,char_maxl_nonneg:epimono} follows from:

\begin{lemma} \label{char_lheart_mono}
Let $\cat{E}$ be a weakly idempotent complete exact category. The following are equivalent:
\begin{enumerate}
\item \label{char_lheart_mono:heart} $\cat{E} = \lheart[b]{\cat{E}}$; 
\item \label{char_lheart_mono:coaisle} $\cat{V}_\ell^b(\cat{E}) = \cat{V}^b(\cat{E})$; 
\item \label{char_lheart_mono:mono} every monomorphism in $\cat{E}$ is an $\cat{E}$-inflation.
\end{enumerate}
\end{lemma}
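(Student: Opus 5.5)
I would prove the cycle (iii) $\Rightarrow$ (ii) $\Rightarrow$ (i) $\Rightarrow$ (iii), where I write (i), (ii), (iii) for \cref{char_lheart_mono:heart}, \cref{char_lheart_mono:coaisle}, \cref{char_lheart_mono:mono} in that order.

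\emph{(iii) $\Rightarrow$ (ii).} Here every monomorphism is an $\cat{E}$-inflation. Let $X \in \cat{V}^b_\ell(\cat{E})$ be bounded, with $X^{<m} = 0$. I would show by ascending induction on $n<0$ that $X$ is $\cat{E}$-acyclic in degree $n$.

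In the lowest degree, $0 \to X^{m} \to X^{m+1}$ is left $\ext{\cat{E}}$-acyclic. By \cref{mono_ext_acyc_hom_acyc} the map $d^{m}$ is then a monomorphism, hence an inflation. For the inductive step, suppose $d^{n-1}$ factors as a deflation $X^{n-1} \defl K$ followed by an inflation $K \infl X^n$. Let $C$ be the cokernel of this inflation, so $d^n$ factors through $X^n \defl C$ via a map $C \to X^{n+1}$.

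I claim $C \to X^{n+1}$ is a monomorphism. Take $c\colon A \to C$ with zero composite to $X^{n+1}$. Lift $c$ along a pullback deflation to some map into $X^n$ killed by $d^n$. Left Ext-acyclicity factors this map, after a further deflation, through $d^{n-1}$, so its image in $C$ vanishes. Hence $c$ becomes zero after precomposing with a deflation, so $c = 0$.

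By (iii) the map $C \to X^{n+1}$ is therefore an inflation, which gives $\cat{E}$-acyclicity at $X^n$ and continues the induction. The top degree $n=-1$ needs no special treatment.

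\emph{(ii) $\Rightarrow$ (i).} If $X \in \lheart[b]{\cat{E}}$, then $X$ is $\cat{E}$-acyclic away from degree $0$, because (ii) holds and $X \in \cat{U}^b(\cat{E})$. Using weak idempotent completeness (so acyclic complexes are thick and truncations exist), $X$ is quasi-isomorphic to its degree-$0$ cohomology object in $\cat{E}$. Thus $X \in \cat{E}$.

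\emph{(i) $\Rightarrow$ (iii).} Let $f\colon L \to M$ be a monomorphism. The two-term complex $L \to M$, with $M$ in degree $0$, is left $\ext{\cat{E}}$-acyclic in negative degrees by \cref{mono_ext_acyc_hom_acyc}, so it lies in $\lheart[b]{\cat{E}}$. By (i) it is isomorphic in $\dbcat{\cat{E}}$ to some $E \in \cat{E}$. The resulting triangle $L \to M \to E \to \susp L$ is an $\lheart[b]{\cat{E}}$-conflation between objects of $\cat{E}$.

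Since $\cat{E}$ is fully exact in $\lheart[b]{\cat{E}}$ by \cref{heart_resolving}, this is an $\cat{E}$-conflation with first map $f$. Checking that the map $L \to M$ in the triangle really is $f$ is routine, since it comes from the brutal truncation triangle. Hence $f$ is an inflation.

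\emph{Main obstacle.} I expect the hardest step to be the monomorphism claim for $C \to X^{n+1}$ in (iii) $\Rightarrow$ (ii). It requires carefully combining the pullback of deflations with the Ext-acyclicity condition to cancel deflations.
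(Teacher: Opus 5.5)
Your proof is correct and takes essentially the same route as the paper. The cycle of implications is the same, (i)$\Rightarrow$(iii) uses the same two-term complex argument, and your degreewise induction for (iii)$\Rightarrow$(ii) is the paper's truncation argument unrolled: your map $C \to X^{n+1}$ is exactly the lowest differential of the paper's truncated complex $Y$, and the only difference is that you check by hand that it is a monomorphism, where the paper gets this from quasi-isomorphism invariance of left $\ext{\cat{E}}$-acyclicity together with \cref{mono_ext_acyc_hom_acyc} (your version also avoids transferring $\cat{E}$-acyclicity from $Y$ back to $X$).
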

\begin{proof}
The implication \cref{char_lheart_mono:coaisle} $\implies$ \cref{char_lheart_mono:heart} is clear. 

We assume \cref{char_lheart_mono:heart} holds. Let $f \colon L \to M$ be a monomorphism. We consider the two-term complex with $d^{-1} = f$. This complex is left $\ext{\cat{E}}$-acyclic complex is negative degrees by \cref{mono_ext_acyc_hom_acyc}. By assumption it is quasi-isomorphic to a complex concentrated in degree 0. Hence $f$ is an $\cat{E}$-inflation.

We assume \cref{char_lheart_mono:mono} holds. Let $X \in \cat{V}_\ell^b(\cat{E})$ and $n$ the minimal degree in which $X$ is not $\cat{E}$-acyclic. Then $X$ is quasi-isomorphic to a complex $Y$ with $Y_{<n} = 0$. We assume $n < 0$. Then $d_Y^n$ has to be a monomorphism, and by assumption an $\cat{E}$-inflation. This is a contradiction, and hence $n = 0$ and $Y \in \cat{V}^b(\cat{E})$.
\end{proof}

\begin{proof}[Proof of \cref{char_maxl_nonneg}]
We assume \cref{char_maxl_nonneg:nonneg} holds. Since $\lheart[b]{\cat{E}} \supseteq \cat{E}$ is non-negative, equality holds. Analogously we get $\rheart[b]{\cat{E}} = \cat{E}$. 

We assume \cref{char_maxl_nonneg:aisle} holds. Let $\cat{C} \supseteq \cat{E}$ be a non-negative subcategory of $\dbcat{\cat{E}}$. From \cref{ext_acyc_t_pair} we obtain
\begin{equation*}
\cat{C} \subseteq (\susp^{>0} \cat{C})^\perp \subseteq (\susp^{>0} \cat{E})^\perp = (\susp \cat{U}^b(\cat{E}))^\perp = \cat{V}^b(\cat{E})\,.
\end{equation*}
Using the dual of \cref{ext_acyc_t_pair} we obtain $\cat{C} \subseteq \cat{U}^b(\cat{E})$. Hence $\cat{C} \subseteq \cat{U}^b(\cat{E}) \cap \cat{V}^b(\cat{E}) = \cat{E}$. 
\end{proof}

\begin{remark}
It follows from \cref{char_maxl_nonneg}: If $\cat{E}$ is maximally non-negative in $\dbcat{\cat{E}}$, then the exact structure of $\cat{E}$ is the maximal exact structure on its underlying additive category. It is easy to see that in this case $\cat{E}$ is abelian if and only if it is pre-abelian. 

In fact, if $\cat{E}$ is the maximal exact structure on a quasi-abelian category which is not an abelian category, then it is not maximally non-negative in $\dbcat{\cat{E}}$.
\end{remark}

An important example of maximally non-negative subcategories are the hearts of t-structures. More generally, we obtain non-negative subcategories through a weaker notion than t-structures:

\begin{definition}
Let $\cat{T}$ be a triangulated category. We call a pair of two full subcategories $(\cat{U}, \cat{V})$ of $\cat{T}$ a \emph{t-pair} if
\begin{enumerate}
\item $\cat{U}$ and $\cat{V}$ are closed under direct summands, isomorphisms and extensions; 
\item $\susp \cat{U} \subseteq \cat{U}$ and $\cat{V} \subseteq \susp \cat{V}$; and
\item $\Hom_\cat{T}(\susp \cat{U},\cat{V})=0$.
\end{enumerate}
We call $\cat{H} = \cat{U}\cap \cat{V}$ the \emph{heart} of the t-pair $(\cat{U},\cat{V})$.

We say a t-pair $(\cat{U},\cat{V})$ is \emph{left maximal} if  $(\susp \cat{U})^{\perp} = \cat{V}$. It is \emph{right maximal} if $\susp \cat{U} = {}^{\perp} \cat{V}$. We say it is \emph{maximal} if it is left and right maximal.   
\end{definition}

In \cref{ext_acyc_t_pair} we have shown that $(\cat{U}^*(\cat{E}),\cat{V}^*_\ell(\cat{E}))$ is a left maximal t-pair in $\dcat[*]{\cat{E}}$, and dually $(\cat{U}^*_r(\cat{E}),\cat{V}^*(\cat{E}))$ is a right maximal t-pair in $\dcat[*]{\cat{E}}$. By applying these constructions after each other, we get a maximal t-pair:

\begin{theorem} \label{maxl_tpair}
Let $\cat{E}$ be a weakly idempotent complete exact category. Then $(\cat{U}^b_r(\lheart[b]{\cat{E}}),\cat{V}^b_\ell(\cat{E}))$ and $(\cat{U}^b_r(\cat{E}),\cat{V}^b_\ell(\rheart[b]{\cat{E}}))$ are maximal t-pairs in $\dbcat{\cat{E}}$.
\end{theorem}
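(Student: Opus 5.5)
We treat the first pair; the second is dual (apply the first to $\cat{E}^\op$, using $(\cat{U}^b_r(\cat{E}))^\op = \cat{V}^b_\ell(\cat{E}^\op)$ and $\rheart[b]{\cat{E}}^\op = \lheart[b]{\cat{E}^\op}$). Write $\cat{H} \colonequals \lheart[b]{\cat{E}}$. The strategy is to transport everything along the triangle equivalence $\Phi \colon \dbcat{\cat{E}} \to \dbcat{\cat{H}}$, which exists because $\cat{E} \subseteq \cat{H}$ is finitely resolving (\cref{heart_resolving}) by \cite[Theorem~3.11]{Henrard/vanRoosmalen:2020c}. Under $\Phi$ the pair $(\cat{U}^b_r(\cat{H}),\cat{V}^b_\ell(\cat{E}))$ becomes $(\cat{U}^b_r(\cat{H}),\Phi\cat{V}^b_\ell(\cat{E}))$ in $\dbcat{\cat{H}}$. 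By \cref{coaisle_same_for_lheart}, $\Phi\cat{V}^b_\ell(\cat{E}) = \cat{V}^b(\cat{H})$. So the claim reduces to showing that $(\cat{U}^b_r(\cat{H}),\cat{V}^b(\cat{H}))$ is a maximal t-pair in $\dbcat{\cat{H}}$. Note that $\cat{H}$ is weakly idempotent complete, since it is closed under summands in $\dbcat{\cat{E}}$ by \cref{ext_acyc_t_pair}.

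Properties (1) and (2) for $\cat{U}^b_r(\cat{H})$ and $\cat{V}^b(\cat{H})$ follow from \cref{ext_acyc_kcat_dcat} and its dual. Right maximality, $\susp\cat{U}^b_r(\cat{H}) = {}^\perp\cat{V}^b(\cat{H})$, is the dual of \cref{ext_acyc_t_pair}\cref{ext_acyc_t_pair:Hom} applied to $\cat{H}$; this part is formal.

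The main point is left maximality: $(\susp\cat{U}^b_r(\cat{H}))^\perp = \cat{V}^b(\cat{H})$. Transported back, this reads $(\susp\Phi^{-1}\cat{U}^b_r(\cat{H}))^\perp = \cat{V}^b_\ell(\cat{E})$. The inclusion $\supseteq$ is part of the t-pair property. For $\subseteq$, observe that $\Phi^{-1}\cat{U}^b_r(\cat{H}) \supseteq \Phi^{-1}\cat{U}^b(\cat{H}) \supseteq \cat{U}^b(\cat{E})$, since $\cat{E}$-acyclic complexes in positive degrees remain $\cat{H}$-acyclic there. Enlarging the left class shrinks its right orthogonal, so
\[
(\susp\Phi^{-1}\cat{U}^b_r(\cat{H}))^\perp \subseteq (\susp\cat{U}^b(\cat{E}))^\perp = \cat{V}^b_\ell(\cat{E})
\]
by \cref{ext_acyc_t_pair}\cref{ext_acyc_t_pair:Hom}. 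Combined with the Hom-vanishing this gives equality.

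The step needing most care is the Hom-vanishing $\Hom(\susp\cat{U}^b_r(\cat{H}),\cat{V}^b(\cat{H})) = 0$ in $\dbcat{\cat{H}}$. This is the dual of the first half of the proof of \cref{ext_acyc_t_pair}\cref{ext_acyc_t_pair:Hom}, namely the dual of \cref{ind_mor_nullhptc}: replace the object of $\cat{V}^b(\cat{H})$ by a complex concentrated in non-negative degrees, and use right $\ext{\cat{H}}$-acyclicity of the source in positive degrees to build the null-homotopy after a quasi-isomorphism on the target side. We also have to check the compatibility statements carefully, namely that $\Phi$ identifies $\cat{U}^b(\cat{E})$ into $\cat{U}^b(\cat{H})$ and that \cref{coaisle_same_for_lheart} applies as stated in the bounded setting.
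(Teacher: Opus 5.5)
Your proposal is correct and matches the paper's proof. You identify $\cat{V}^b_\ell(\cat{E})$ with $\cat{V}^b(\lheart[b]{\cat{E}})$ via \cref{coaisle_same_for_lheart}, take (1), (2) and right maximality from the dual of \cref{ext_acyc_t_pair}, and get left maximality from $\susp\cat{U}^b(\cat{E}) \subseteq \susp\cat{U}^b_r(\lheart[b]{\cat{E}})$. The Hom-vanishing you flag as the delicate step needs no separate dual of \cref{ind_mor_nullhptc}: it already follows from the equality ${}^\perp\cat{V}^b(\lheart[b]{\cat{E}}) = \susp\cat{U}^b_r(\lheart[b]{\cat{E}})$ you invoke for right maximality.
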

\begin{proof}
From \cref{ext_acyc_t_pair} applied to $(\lheart[b]{\cat{E}})^\op$, we get (1), (2) and the second part of (3), as well as $(\susp \cat{U}^b_r(\lheart[b]{\cat{E}}))^\perp \supseteq \cat{V}_\ell(\cat{E})$ using \cref{coaisle_same_for_lheart}. Further, we get
\begin{equation*}
\susp \cat{U}^b_r(\lheart[b]{\cat{E}}) \supseteq \susp \cat{U}^b(\cat{E}) \quad \implies \quad (\susp \cat{U}^b_r(\lheart[b]{\cat{E}}))^\perp \subseteq (\susp \cat{U}^b(\cat{E}))^\perp = \cat{V}^b_\ell(\cat{E})\,;
\end{equation*}
the latter again holds by \cref{ext_acyc_t_pair}.
\end{proof}

The hearts of the t-pairs in \cref{maxl_tpair} are $\rheart[b]{\lheart[b]{\cat{E}}}$ and $\lheart[b]{\rheart[b]{\cat{E}}}$. These do not need to coincide in general:

\begin{example} \label{ex_LHRH_not_RHLH}
Let $k$ be a field and $\Lambda = k (1 \leftarrow 2)$ be the finite-dimensional $k$-algebra given by quiver representations of Dynkin type $A_2$. The category of finite-dimensional $\Lambda$-modules $\mod{\Lambda}$ is Krull--Schmidt and has three indecomposable objects $P(1)$, $P(2)=I(1)$ and $I(2)$. 
The bounded derived category $\dbcat{\mod{\Lambda}}$ is an Auslander--Reiten category; this means it is Krull--Schmidt and every indecomposable object fits into an almost split triangle. The Auslander--Reiten quiver is
\[
\begin{tikzcd}[column sep=small]
\susp^{-1} I(2) \arrow[rd] \arrow[rr, dashed] & & P(2) \arrow[rd] \arrow[rr, dashed] & & \susp P(1) \arrow[rd] \arrow[rr, dashed] & & \susp I(2) \\
\cdots & P(1) \arrow[ru] \arrow[rr, dashed] & & I(2) \arrow[ru] \arrow[rr,dashed] & & \susp P(2) \arrow[ru] & \cdots                            
\end{tikzcd}
\]
where the dashed arrows point from the first to the third object of an exact triangle. 

We consider the exact category $\cat{E}=\add (I(1) \oplus  I(2))$. Then 
\begin{equation*}
\begin{aligned}
\rheart[b]{\cat{E}} &= \add(P(1)\oplus P(2)\oplus I(2)) \cong \lmod{\Lambda} \quad \text{and} \\
\lheart[b]{\cat{E}} &= \add(P(2)\oplus I(2)\oplus  \susp P(1))\,.
\end{aligned}
\end{equation*}
Both are hearts of a t-structure and therefore maximally non-negative. In particular, we have 
\[
\lheart[b]{\rheart[b]{\cat{E}}} = \rheart[b]{\cat{E}} \neq \lheart[b]{\cat{E}} = \rheart[b]{\lheart[b]{\cat{E}}}\,.
\]
\end{example}

The bounded derived equivalence between $\rheart[b]{\lheart[b]{\cat{E}}}$ and $\lheart[b]{\rheart[b]{\cat{E}}}$ can be seen as a generalization of derived equivalences for Artin algebras induced from tilting modules of projective dimension at most $n$ and, more generally, of those induced by tilting objects in abelian categories, cf. the following example. 

\begin{example}
Let $A$ be a finite-dimensional algebra over a field $k$ and $\lmod{A}$ the category of finite-dimensional left $A$-modules. For $n \geq 1$ let $T \in \lmod{A}$ be an $n$-tilting module, that is $T$ is self-orthogonal ($\Ext^i_A(T,T)=0$ for all $i \geq 1$), $\pdim_A T\leq n$ and there exists an exact sequence $0 \to A \to T^0 \to \cdots \to T^n \to 0 $ with $T^i \in \add (T)$, cf.\@ \cite[Chapter III]{Happel:1988}. The category 
\begin{equation*}
T^{\perp_{>0}}
\coloneqq \set{M \in \lmod{A}}{\Ext^i_{A}(T, X)=0 \, \forall i\geq 1} \subseteq \lmod{A}
\end{equation*}
is extension-closed and we let $\cat{E}$ be $T^{\perp_{>0}}$ equipped with the fully exact structure. Then $\cat{E}$ is coresolving in $\lmod{A}$. Moreover, for any $M\in \lmod{A}$ taking an injective resolution yields an exact sequence
\[
0 \to M \to I^0 \to I^1 \to \cdots \to I^{n-1} \to \Omega^{-n}M \to 0
\]
where $I^i$ are injective $A$-modules for $0 \leq i \leq n-1$. Moreover, by dimension shift $\Ext^i_A (T, \Omega^{-n} M)=\Ext^{i+n}_A (T,M)=0$ for all $i \geq 1$ as $\pdim T\leq n$. So  $\Omega^{-n} M \in \cat{E}$ and $\cat{E} \subseteq \lmod{A}$ is \emph{$n$-coresolving}; cf.\@ \cref{n_res_completion}. In particular,
\[
\lmod{A} = \rheart[b]{\cat{E}}= \lheart[b]{\rheart[b]{\cat{E}}} .
\]

Let $B \coloneqq \End_A(T)^\op$. Then $T$, viewed as a left $B$-module, is $n$-tilting by \cite[Theorem~1.5]{Miyashita:1986}. Equivalently, $C \coloneqq \Hom_k(T,k) \in \lmod{B}$ with ${\rm D}=\Hom_k(-,k)$ is an $n$-cotilting module; see for example \cite[Section~2]{AngeleriHuegel/Coelho:2001}. 
Dually to before, 
\begin{equation*}
{}^{{}_{0<}\perp} C \coloneqq 
\set{N \in \lmod{B}}{\Ext^i_{B}(N, C)=0 \, \forall i\geq 1} \subseteq \lmod{B}
\end{equation*}
is extension-closed and we let $\cat{F}$ be ${}^{{}_{0<}\perp} C$ with the fully exact structure. Then $\cat{F}$ is $n$-resolving in $\lmod{B}$. In particular,
\begin{equation*}
\lmod{B} = \lheart[b]{\cat{F}}= \rheart[b]{\lheart[b]{\cat{F}}}\,.
\end{equation*}

A classical result of Miyashita \cite[Theorem~1.16]{Miyashita:1986} gives an equivalence of exact categories 
\[
\Hom_A(T,-) \colon \cat{E} \to \cat{F}\,.
\]
Therefore, the induced derived equivalence between $\lmod{A}$ and $\lmod{B}$ is implied by our construction: 
\begin{equation*}
A\text{-}{\rm mod} = \rheart[b]{\cat{E}} \supseteq \cat{E} \cong \cat{F} \subseteq \lheart[b]{\cat{F}} = B\text{-}{\rm mod}
\end{equation*}
induces triangle equivalences
\begin{equation*}
\dcat[*]{A\text{-}{\rm mod}} \leftarrow \dcat[*]{\cat{E}} \cong \dcat[*]{\cat{F}} \to \dcat[*]{B\text{-}{\rm mod}}
\end{equation*}
for $* \in \{\emptyset,b\}$ using \cite{Henrard/vanRoosmalen:2020a}

Finitely generated modules over \emph{artinian} rings are dualizing $R$-varieties in the sense of Auslander; see \cite{Auslander/Reiten:1974}. Using this we found the $n$-cotilting module $C$. In general, this is not possible.

An object $T$ in an abelian category $\cat{A}$ is $n$-tilting if $\cat{E}=T^{\perp_{>0}}$ has enough projectives given by $\add (T)$ and $\cat{E}$ is $n$-coresolving. 
Assume $B= \End_{\cat{A}}(T)^\op$ is noetherian, so the category of finitely generated left $B$-modules is abelian. Then $\Hom_{\cat{A}} (T, -) \colon \cat{E} \to \lmod{B} $ is fully faithful, exact and identifies $\cat{E}$ with a resolving subcategory $\cat{F}$ of $\lmod{B}$. This yields
\begin{equation*}
\cat{A} = \rheart[b]{\cat{E}} \supseteq \cat{E} \cong \cat{F} \subseteq \lheart{\cat{F}} = \lmod{B}\,.
\end{equation*}
This need not yield derived equivalences, as the boundedness conditions do not match. If $\cat{F}$ is finitely resolving, then $\lheart{\cat{F}} = \lheart[b]{\cat{F}}$ and we get a bounded derived equivalence between $ \cat{A}$ and $\lmod{B}$ by \cite{Henrard/vanRoosmalen:2020a}. For example, if $\operatorname{gldim}(B) < \infty$ then $\cat{F}$ is finitely resolving. Instances of this are given by tilting bundles in coherent sheaves on a noetherian scheme, cf.\@ \cite{Beilinson:1978,Baer:1988}. 
\end{example}

Next we provide an example of maximally non-negative subcategories of $\dbcat{\cat{E}}$ that are not abelian. 

\begin{example} \label{ex_maxl_nonneg_not_abelian}
Let $\Lambda = k[T]/(T^2)$ for any field $k$ and $\cat{E} = \add(\Lambda) \subseteq \lmod{\Lambda}$. The only exact structure on $\cat{E}$ is the split exact structure and all epimorphisms and all monomorphisms are split. Hence, by \cref{char_maxl_nonneg}, the category $\cat{E}$ is maximally non-negative in $\dbcat{\cat{E}}$. Yet, the category $\cat{E}$ is not abelian because there is a morphism, namely $\Lambda \xrightarrow{\cdot T} \Lambda$, which has no epimorphism-monomorphism factorization.
On the other hand $\cat{E}$ is not maximally non-negative in $\dcat[-]{\cat{E}} = \dcat[-]{\lmod{\Lambda}}$ as it is properly contained in $\lmod{\Lambda}$.  
\end{example}

This \namecref{ex_maxl_nonneg_not_abelian} shows the t-pairs in \cref{maxl_tpair} need not be t-structures in general. Moreover, since the t-pairs are maximal, they are not contained in a t-structure. 

\begin{question}
Let $\cat{E}$ be an exact category, that is not abelian and maximally non-negative in $\dbcat{\cat{E}}$. Does there exists a bounded t-structure on $\dbcat{\cat{E}}$? Or equivalently, does there exists an abelian maximally non-negative subcategory in $\dbcat{\cat{E}}$?
\end{question}

\section{Functorial view on Ext-resolutions}

Let $\cat{A}$ be a small additive category. The category of right $\cat{A}$-modules is the category of additive functors $\Mod{\cat{A}} \colonequals \Add(\cat{A}^\op,\abcat)$. By \cite[Theorem~5.11]{Freyd:2003}, the category $\Mod{\cat{A}}$ is abelian and the (co)kernels are precisely the pointwise (co)kernels. We denote by
\begin{equation*}
\yoneda{-} \colon \cat{A} \to \Mod{\cat{A}} \,,\quad M \mapsto \Hom_{\cat{A}}(-,M)
\end{equation*}
the \emph{Yoneda embedding}. The functors $\Hom_{\cat{A}}(-,X)$ are called \emph{representable}. In the functor category $\Mod{\cat{A}}$ the representable functors are projective. 

An object $\sF \in \Mod{\cat{A}}$ is called \emph{finitely presented}, if there exists a morphism $f \colon M \to N$ in $\cat{A}$, such that
\begin{equation*}
\yoneda{M} \xrightarrow{\yoneda{f}} \yoneda{N} \to \sF \to 0
\end{equation*}
is exact in $\Mod{\cat{A}}$. The subcategory of finitely presented objects in $\Mod{\cat{A}}$ is extension- and inflation-closed. We denote by $\mod{\cat{A}}$ the exact category of the finitely presented objects in $\Mod{\cat{A}}$ equipped with the fully exact structure. 

\subsection{Functors with an Ext-resolution} \label{sec:functors_ext_resn}

Let $\cat{E}$ be a small exact category. We are interested in subcategories of $\Mod{\cat{E}}$ consisting of functors that have an Ext-resolution; 
that is functors $\coker(\yoneda{d_X^{-1}})$ where $X$ is an $\ext{\cat{E}}$-resolution. We say such a functor \emph{admits an $\ext{\cat{E}}$-resolution}. For such functors, we prove an analogue of the horseshoe lemma. Unlike the classical case, the construction involves a quasi-isomorphism similar to the results in \cref{sec:mor_ext_rsn}.

\begin{lemma} \label{horseshoe}
Let $\cat{E}$ be a small exact category. Let $0 \to \sE \to \sF \to \sG \to 0$ be a short exact sequence in $\Mod{\cat{E}}$ such that $\sE$ and $\sG$ admit $\ext{\cat{E}}$-resolutions $X$ and $Z$, respectively. Then there exists a complex $W$, an $\cat{E}$-quasi-isomorphism $g \colon W \to Z$ and a chain map $f \colon \susp^{-1} W \to X$ such that $\sF \cong \coker(\yoneda{d_{\cone(f)}^{-1}})$. 

In particular, the complex $\cone(f)$ is an $\ext{\cat{E}}$-resolution of $\sF$.
\end{lemma}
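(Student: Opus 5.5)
We follow the classical horseshoe lemma, replacing projectivity of the terms of $Z$ by the Ext-lifting technique of \cref{induced_mor_extresn}. Write $\pi_X \colon \yoneda{X^0} \to \sE$ and $\pi_Z \colon \yoneda{Z^0} \to \sG$ for the canonical epimorphisms. Since $\yoneda{Z^0}$ is projective in $\Mod{\cat{E}}$, the map $\pi_Z$ lifts along $\sF \to \sG$ to a morphism $\yoneda{Z^0} \to \sF$. Together with $\yoneda{X^0} \to \sE \to \sF$ this gives an epimorphism $\yoneda{X^0 \oplus Z^0} \to \sF$, by the snake lemma. This fixes the degree $0$ part of the prospective complex $\cone(f)$, namely $X^0 \oplus Z^0$.

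Next we need the degree $-1$ component of $f$, i.e.\ a map $f^0 \colon W^{-1} = Z^{-1} \to X^0$ (the shift $\susp^{-1}W \to X$ pairs $W^{-1}$ with $X^0$). The lifted map $\yoneda{Z^0}\to\sF$ composed with $\yoneda{d_Z^{-1}}$ lands in $\sE$, because its image in $\sG$ vanishes. By projectivity of $\yoneda{Z^{-1}}$ and Yoneda, this map lifts through $\pi_X$ to a morphism $f^0 \colon Z^{-1} \to X^0$ (up to sign). Then $\yoneda{d_{\cone}^{-1}} = \begin{psmallmatrix} d_X^{-1} & \pm f^0 \\ 0 & d_Z^{-1}\end{psmallmatrix}$ maps into the kernel of $\yoneda{X^0\oplus Z^0}\to \sF$. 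A diagram chase (snake lemma on the map of the two presentation sequences into $0\to\sE\to\sF\to\sG\to0$) shows that the image is exactly that kernel, so $\sF \cong \coker(\yoneda{d^{-1}_{\cone(f)}})$. This relation only involves degrees $\geq -1$ of $\cone(f)$, i.e.\ $X^{\ge -1}$ and $W^{\geq -2}$, so it is unaffected by the further construction as long as we keep $W^n = Z^n$ for $n \geq -1$ and $g^n=\id$ there.

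It remains to extend $f^0$ to a chain map $\susp^{-1}W \to X$ with $W \to Z$ an $\cat{E}$-quasi-isomorphism. We apply \cref{induced_mor_extresn} to $\susp^{-1}$ of the (truncated) complex $Z$, mapping into the $\ext{\cat{E}}$-resolution $X$. The given data in degrees $0$ and $-1$ are $f^0$ in the top degree and zero above, and the required commutativity is $d_X^0 f^0 = 0$, which is trivial since $X^1=0$. \Cref{induced_mor_extresn} then produces a complex $W'$, an $\cat{E}$-quasi-isomorphism $W' \to \susp^{-1}Z$ that is the identity in the top degrees, and a chain map $W' \to X$ extending $f^0$. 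Setting $W = \susp W'$ gives $g \colon W \to Z$ and $f \colon \susp^{-1}W \to X$. The main bookkeeping obstacle is checking the indices: we need the lemma's conclusion that $g$ and $f$ agree with the given data in the two top degrees, so that $W^{-1}=Z^{-1}$, $W^0=Z^0$, and the presentation of $\sF$ is preserved. The lemma's guarantee for degrees $\ge -1$ after shifting gives exactly this.

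Finally, $\cone(f)$ is concentrated in non-positive degrees, since $W$ and $X$ are. For left $\ext{\cat{E}}$-acyclicity in negative degrees we use \cref{ConeExtacyclic} in degree $n<0$. This requires $\susp^{-1}W$ to be left $\ext{\cat{E}}$-acyclic in degree $n+1$, i.e.\ $W$ left $\ext{\cat{E}}$-acyclic in degree $n \le -1$, and $X$ left $\ext{\cat{E}}$-acyclic in degree $n$. The latter holds since $X$ is an $\ext{\cat{E}}$-resolution. The former follows from \cref{Extacyc_cone_then} applied to the $\cat{E}$-quasi-isomorphism $g$: $Z$ is left $\ext{\cat{E}}$-acyclic in negative degrees and $\cone(g)$ is $\cat{E}$-acyclic, hence left $\ext{\cat{E}}$-acyclic, in every degree. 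Hence $\cone(f)$ is an $\ext{\cat{E}}$-resolution of $\sF$.
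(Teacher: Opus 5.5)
Your overall strategy matches the paper's: the classical horseshoe in $\Mod{\cat{E}}$, then \cref{induced_mor_extresn} to continue $f$ to the left, then \cref{ConeExtacyclic} and \cref{Extacyc_cone_then} for acyclicity of the cone. Your identification $\sF \cong \coker(\yoneda{d^{-1}_{\cone(f)}})$ and your final acyclicity argument are correct. The gap is in how you apply \cref{induced_mor_extresn}.

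That lemma takes \emph{two} components $f^{-1}, f^0$ as input, together with the commutative square in degrees $-1 \to 0$. It only constructs the components in degrees $\le -2$, and its first inductive step uses exactly that square together with left $\ext{\cat{E}}$-acyclicity of the target in degree $-1$. For $\susp^{-1}Z \to X$, you must therefore already have $f^{-1}\colon Z^{-2}\to X^{-1}$ with $d_X^{-1} f^{-1} = -f^0 d_Z^{-2}$. The square $d_X^0 f^0 = 0$ that you check is the trivial one. You also cannot shift so that your $f^0$ plays the role of the lemma's $f^{-1}$. The target would then be $\susp X$, which is not an $\ext{\cat{E}}$-resolution: $X$ is not left $\ext{\cat{E}}$-acyclic in degree $0$. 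That condition would say every morphism $A \to X^0$ factors through $d_X^{-1}$ after an $\cat{E}$-deflation, i.e.\ that $\sE$ is locally effaceable. So the degree $-1$ component cannot come from Ext-acyclicity of $X$; it must come from the specific choice of $f^0$.

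The repair is short. Let $\iota\colon\sE\to\sF$ be the inclusion and $\tilde{\pi}_Z\colon \yoneda{Z^0}\to\sF$ your lift. By construction $\iota\pi_X\yoneda{f^0} = \pm\tilde{\pi}_Z\yoneda{d_Z^{-1}}$, so $\iota\pi_X\yoneda{f^0 d_Z^{-2}} = 0$. Since $\iota$ is a monomorphism, $\pi_X\yoneda{f^0 d_Z^{-2}} = 0$. The sequence $\yoneda{X^{-1}}\to\yoneda{X^0}\to\sE\to 0$ is exact in $\Mod{\cat{E}}$ (an honest cokernel, not just after localization). Evaluating at $Z^{-2}$ and using Yoneda gives $h\colon Z^{-2}\to X^{-1}$ with $d_X^{-1}h = f^0 d_Z^{-2}$; set $f^{-1} = -h$. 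The paper gets the same map by lifting $\yoneda{Z^{-2}}\to\ker(\gamma')$ through $\ker(\beta')$, using projectivity and the exact top row of the horseshoe diagram.

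One index point: then apply \cref{induced_mor_extresn} to $\susp^{-1}Z$ itself, not to a truncation. If you truncate, $Z^0$ is lost and your claimed $W^0 = Z^0$ fails. The lemma's proof keeps all degrees $\ge -1$ of the source fixed, so you get $W^n = Z^n$ and $g^n = \id$ for $n \ge -2$, exactly as in the paper.
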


\begin{proof}
We start with the same argument as for the classical horseshoe lemma; see for example \cite[Lemma~2.2.8]{Weibel:1994}. We obtain the commutative diagram
\begin{equation*}
\begin{tikzcd}[ampersand replacement=\&,row sep=small]
0 \ar[r] \&[-1em] \ker(\alpha') \ar[r,mono] \ar[d,mono] \& \ker(\beta') \ar[r,epi] \ar[d,mono] \& \ker(\gamma') \ar[d,mono] \ar[r] \&[-1em] 0 \\
0 \ar[r] \& \yoneda{X^{-1}} \ar[d,epi,"\alpha'"] \ar[r,mono,"{\begin{psmallmatrix} 1 \\ 0 \end{psmallmatrix}}"] \& \yoneda{X^{-1}} \oplus \yoneda{Z^{-1}} \ar[r,epi,"{\begin{psmallmatrix} 0 & 1 \end{psmallmatrix}}"] \ar[d,epi,"\beta'"] \& \yoneda{Z^{-1}} \ar[d,epi,"\gamma'"] \ar[r] \& 0 \\
0 \ar[r] \& \ker(\alpha) \ar[d,mono] \ar[r,mono] \& \ker(\beta) \ar[d,mono] \ar[r,epi] \& \ker(\gamma) \ar[d,mono] \ar[r] \& 0 \\
0 \ar[r] \& \yoneda{X^0} \ar[r,mono,,"{\begin{psmallmatrix} 1 \\ 0 \end{psmallmatrix}}"] \ar[d,epi,"\alpha"] \& \yoneda{X^0} \oplus \yoneda{Z^0} \ar[r,epi,"{\begin{psmallmatrix} 0 & 1 \end{psmallmatrix}}"] \ar[d,dashed,"\beta"] \& \yoneda{Z^0} \ar[d,epi,"\gamma"] \ar[r] \& 0 \\
0 \ar[r] \& \sE \ar[r,mono] \& \sF \ar[r,epi] \& \sG \ar[r] \& 0 \nospacepunct{.}
\end{tikzcd}
\end{equation*}
As the bottom two rows are exact by construction, the morphism $\beta$ exists and it is an epimorphism by the 5-lemma. The middle row is exact by the snake lemma, and the vertical maps to the middle row are epimorphisms by construction. Again by the snake lemma, the top row is exact. By the commutativity, the composition
\begin{equation*}
\yoneda{X^{-1}} \oplus \yoneda{Z^{-1}} \to \ker(\beta) \to \yoneda{X^0} \oplus \yoneda{Z^0}
\end{equation*}
is of the form $\yoneda{\begin{psmallmatrix} d_X^{-1} & f^0 \\ 0 & d_Z^{-1} \end{psmallmatrix}}$. 

The induced morphism $\yoneda{Z^{-2}} \to \ker(\gamma')$ factors through $\ker(\beta')$ as $\yoneda{Z^{-2}}$ is projective. Hence the composition $\yoneda{Z^{-2}} \to \yoneda{X^{-1}} \oplus \yoneda{Z^{-1}}$ yields a morphism $f^{-1} \colon Z^{-2} \to X^{-1}$ and we obtain the following commuting diagram
\begin{equation*}
\begin{tikzcd}
\susp^{-1} Z &[-3em] : &[-3em] \cdots \ar[r] & Z^{-3} \ar[r,"-d_Z^{-3}"] & Z^{-2} \ar[r,"{-d_Z^{-2}}"] \ar[d,"{f^{-1}}"] & Z^{-1} \ar[r,"-d_Z^{-1}"] \ar[d,"{f^0}"] & Z^0 \ar[r] \ar[d] & 0 \ar[r] \ar[d] & \cdots \\
X & : & \cdots \ar[r] & X^{-2} \ar[r,"d_X^{-2}"] & X^{-1} \ar[r,"{d_X^{-1}}"] & X^0 \ar[r] & 0 \ar[r] & 0 \ar[r] & \cdots \nospacepunct{.}
\end{tikzcd}
\end{equation*}
By \cref{induced_mor_extresn}, there exists an $\cat{E}$-quasi-isomorphism $g \colon \susp^{-1} W \to \susp^{-1} Z$ and a chain map $h \colon \susp^{-1} W \to X$ extending this diagram. This means $W^n = Z^n$ and $g^n = \id$ for $n \geq -2$. Hence $W$, $g$ and $f$ satisfy the claim.
\end{proof}

As a consequence of this \namecref{horseshoe}, the subcategories of $\Mod{\cat{E}}$ of functors with a (bounded) $\ext{\cat{E}}$-resolution are extension-closed. Hence we can define the following exact categories:

\begin{definition}
Let $\cat{E}$ be a small exact category. We denote by
\begin{enumerate}
\item $\Pmod{\cat{E}}$ the strictly full subcategory of $\Mod{\cat{E}}$ of functors which admit an $\ext{\cat{E}}$-resolution, equipped with the fully exact structure.
\item $\Pmod[b]{\cat{E}}$ the strictly full subcategory of $\Mod{\cat{E}}$ of functors which admit a bounded $\ext{\cat{E}}$-resolution, equipped with the fully exact structure.
\end{enumerate}
\end{definition}

There is a sequence of fully exact subcategories
\begin{equation*}
\Pmod[b]{\cat{E}} \subseteq \Pmod{\cat{E}} \subseteq \mod{\cat{E}} \subseteq \Mod{\cat{E}}\,.
\end{equation*}

\begin{lemma} \label{extending_mor_to_ext_resn}
Let $\cat{E}$ be a small exact category and $\sF \in \Pmod{\cat{E}}$. If $\sF \cong \coker(\yoneda{f})$ for some morphism $f \colon M \to N$ in $\cat{E}$, then there exists a split epimorphism $t$ and an $\ext{\cat{E}}$-resolution $X$ such that $d_X^{-1} = f \oplus t$. In particular, $X$ is an $\ext{\cat{E}}$-resolution of $\sF$.

If $\sF \in \Pmod[b]{\cat{E}}$, then the $\ext{\cat{E}}$-resolution can be chosen to be bounded.
\end{lemma}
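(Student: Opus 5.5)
The plan is to reduce to a given $\ext{\cat{E}}$-resolution of $\sF$ by a Schanuel-type change of basis. All modifications will be direct sums with contractible complexes and isomorphisms of complexes, and these preserve left $\ext{\cat{E}}$-acyclicity by \cref{Extacyc_htpc_retract}.

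Since $\sF \in \Pmod{\cat{E}}$, there is an $\ext{\cat{E}}$-resolution $Y$ with $\sF \cong \coker(\yoneda{d_Y^{-1}})$; in the bounded case we choose $Y$ bounded. We enlarge $Y$ to
\[
Y' \colonequals Y \oplus (N \xrightarrow{\id} N)\oplus(M\oplus Y^0 \xrightarrow{\id} M\oplus Y^0),
\]
where the first added summand sits in degrees $-1,0$ and the second in degrees $-2,-1$. Both added summands are contractible, so $Y'$ is again an $\ext{\cat{E}}$-resolution of $\sF$, bounded if $Y$ is. In degrees $-1$ and $0$ we have
\[
Y'^{-1} = Y^{-1}\oplus N\oplus M\oplus Y^0, \qquad Y'^0 = Y^0\oplus N, \qquad d_{Y'}^{-1} = (d_Y^{-1}\oplus \id_N,\,0).
\]
The target morphism is
\[
f \oplus t \colon M \oplus (Y^0\oplus Y^{-1}\oplus N) \to N \oplus Y^0, \qquad t=(\id_{Y^0},0,0).
\]
The morphism $t$ is a split epimorphism. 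Its source and target are the same objects as those of $d_{Y'}^{-1}$, up to reordering the summands.

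The key step, and the main obstacle, is to find automorphisms $\sigma$ of $Y'^{-1}$ and $\varphi$ of $Y'^0$ with
\[
\varphi\, d_{Y'}^{-1}\, \sigma^{-1} = f\oplus t.
\]
Both $\yoneda{N}\to\sF$ and $\yoneda{Y^0}\to\sF$ are epimorphisms out of projectives, so by Yoneda there are $\alpha\colon N\to Y^0$ and $\beta\colon Y^0\to N$ compatible with the maps to $\sF$.

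First I take $\varphi$ to be the standard Schanuel automorphism of $N\oplus Y^0$ built from $\alpha$ and $\beta$, for instance a product of elementary matrices $\begin{psmallmatrix}1&\beta\\0&1\end{psmallmatrix}$ and $\begin{psmallmatrix}1&0\\-\alpha&1\end{psmallmatrix}$. Composing with $\varphi$ does not change the map to $\sF$ read through the two presentations. Hence the images of $\yoneda{(\varphi d_{Y'}^{-1})}$ and of $\yoneda{(f\oplus \id_{Y^0})}$ coincide as subfunctors of $\yoneda{(N\oplus Y^0)}$: both equal the kernel of the map to $\sF$.

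Projectivity of representables then yields morphisms $u$, $v$ in both directions factoring each differential through the other. With these I build $\sigma$ as an elementary (unitriangular) matrix on $Y'^{-1}$. It uses the free summand $M\oplus Y^0$, which maps to zero under $d_{Y'}^{-1}$, to absorb the discrepancy, turning $\varphi d_{Y'}^{-1}$ into $(f\oplus\id_{Y^0},0)$. This is exactly the trick that proves $\ker\oplus$ projective $\cong$ $\ker\oplus$ projective in Schanuel's lemma. I expect some care in ordering the elementary operations, but no conceptual difficulty.

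Finally, define $X$ by
\[
X^n = Y'^n \ \text{for all } n, \qquad d_X^{-1}=f\oplus t, \qquad d_X^{-2} = \sigma\, d_{Y'}^{-2}, \qquad d_X^{n}=d_{Y'}^{n} \ \text{for } n\le -3,
\]
with $X^0$ identified with $Y'^0$ via $\varphi$. The maps $(\varphi,\sigma,\id,\dots)$ then form an isomorphism of complexes $Y'\to X$. By \cref{Extacyc_htpc_retract}, $X$ is left $\ext{\cat{E}}$-acyclic in negative degrees, so $X$ is an $\ext{\cat{E}}$-resolution with $d_X^{-1}=f\oplus t$. Since $t$ is a split epimorphism,
\[
\coker(\yoneda{d_X^{-1}})\cong\coker(\yoneda f)\cong \sF.
\]
If $Y$ is bounded, then so is $X$, which settles the bounded case.
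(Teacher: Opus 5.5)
Your proposal is correct and is essentially the paper's proof. The paper uses exactly your enlarged complex $Y \oplus (N \xrightarrow{1} N) \oplus (Y^0 \oplus M \xrightarrow{1} Y^0 \oplus M)$ and the same split epimorphism $t$, and it gets the isomorphism onto a complex with $(-1)$st differential $f \oplus t$ from comparison maps and homotopies between the two presentations, which is your Schanuel-type change of basis (your $\sigma$ is a product of an upper and a lower unitriangular matrix rather than a single one, which is harmless).
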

\begin{proof}
As $\sF \in \Pmod{\cat{E}}$ there exists an $\ext{\cat{E}}$-resolution $Y$ with $\sF \cong \coker(\yoneda{d_Y^{-1}})$. Then there exist commutative squares
\begin{equation*}
\begin{tikzcd}
Y^{-1} \ar[r,"u^{-1}"] \ar[d,"d_Y^{-1}"] & M \ar[r,"v^{-1}"] \ar[d,"f"] & Y^{-1} \ar[d,"d_Y^{-1}"] \\
Y^0 \ar[r,"u^{0}"] & N \ar[r,"v^{0}"] & Y^0
\end{tikzcd}
\end{equation*}
and morphisms $\sigma \colon Y^0 \to Y^{-1}$ and $\tau \colon N \to M$ such that 
\begin{equation*}
\id_{Y^0} - v^0 u^0 = d_Y^{-1} \sigma \quad \text{and} \quad \id_N - u^0 v^0 = f \tau\,.
\end{equation*}
We obtain an isomorphism of complexes
\begin{equation*}
\begin{tikzcd}[ampersand replacement=\&,column sep=large]
Y^{-3} \ar[r,"{\begin{psmallmatrix} 0 \\ 0 \\ d_Y^{-3} \end{psmallmatrix}}"] \ar[d,"="] \& Y^0 \oplus M \oplus Y^{-2} \ar[r,"{\begin{psmallmatrix} 1 & 0 & 0 \\ 0 & 1 & 0 \\ 0 & 0 & d_Y^{-2} \\ 0 & 0 & 0 \end{psmallmatrix}}"] \ar[d,"="] \& Y^0 \oplus M \oplus Y^{-1} \oplus N \ar[r,"{\begin{psmallmatrix} 0 & 0 & d_Y^{-1} & 0 \\ 0 & 0 & 0 & 1 \end{psmallmatrix}}"] \ar[d,"\cong"] \& Y^0 \oplus N \ar[d,"\cong"] \\
Y^{-3} \ar[r] \& Y^0 \oplus M \oplus Y^{-2} \ar[r] \& Y^0 \oplus M \oplus Y^{-1} \oplus N \ar[r,"{\begin{psmallmatrix} 1 & 0 & 0 & 0 \\ 0 & f & 0 & 0 \end{psmallmatrix}}"] \& Y^0 \oplus N \nospacepunct{.}
\end{tikzcd}
\end{equation*}
The top row is the direct sum of $Y$ with a split acyclic complex; hence it is left $\ext{\cat{E}}$-acyclic in negative degrees. Hence, the bottom row is an $\ext{\cat{E}}$-resolution, and the $(-1)$st differential is $f \oplus t$ for a split epimorphism $t$. 
\end{proof}

\begin{lemma} \label{Pcat_defl_clsd}
Let $\cat{E}$ be a small exact category. Then $\Pmod{\cat{E}}$ and $\Pmod[b]{\cat{E}}$ are deflation-closed in $\mod{\cat{E}}$.
\end{lemma}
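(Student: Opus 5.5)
The plan is to realize $\sE$ as the degree-$0$ cokernel of the fibre of a chain map between $\ext{\cat{E}}$-resolutions of $\sF$ and $\sG$. Fix a short exact sequence $0 \to \sE \to \sF \xrightarrow{\pi} \sG \to 0$ in $\mod{\cat{E}}$ with $\sF,\sG \in \Pmod{\cat{E}}$; we must show $\sE \in \Pmod{\cat{E}}$, and the bounded case should follow by keeping track of lengths. I expect the construction of the fibre to be routine and the final identification of its cokernel with $\sE$ to be the main obstacle.

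First choose $\ext{\cat{E}}$-resolutions $Y$ of $\sF$ and $Z$ of $\sG$. Replace $Y$ by its direct sum with the split acyclic two-term complex $Z^0 \xrightarrow{1} Z^0$ in degrees $-1,0$. This changes neither $\coker(\yoneda{d_Y^{-1}})$ nor left $\ext{\cat{E}}$-acyclicity in negative degrees, as in the proof of \cref{extending_mor_to_ext_resn}. It gives $Y^0 = Y'^0 \oplus Z^0$.

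Next build maps in degrees $0$ and $-1$. Since the representable functors are projective, $\yoneda{Y'^0} \to \sF \to \sG$ lifts along $\yoneda{Z^0} \to \sG$. Add to this lift the identity on the new $Z^0$ summand. The result is a morphism $\varphi^0 \colon Y^0 \to Z^0$ that is the projection up to an automorphism, hence split epi, and it is compatible with $\pi$. The composite $\yoneda{Y^{-1}} \to \yoneda{Y^0} \to \yoneda{Z^0}$ lands in $\ker(\yoneda{Z^0} \to \sG) = \operatorname{im}\yoneda{d_Z^{-1}}$. Projectivity again gives $\varphi^{-1} \colon Y^{-1} \to Z^{-1}$ with $d_Z^{-1}\varphi^{-1} = \varphi^0 d_Y^{-1}$.

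Then apply \cref{induced_mor_extresn} to this square. It yields an $\cat{E}$-quasi-isomorphism $W \to Y$ which is the identity in degrees $\ge -1$, together with a chain map $\varphi \colon W \to Z$. By \cref{ext_acyc_kcat_dcat}, $W$ is still an $\ext{\cat{E}}$-resolution of $\sF$, so we may replace $Y$ by $W$.

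Now set $C \colonequals \susp^{-1}\cone(\varphi)$. By \cref{ConeExtacyclic} (applied with $Y$ acyclic in degree $n+1$ and $Z$ in degree $n$), $C$ is left $\ext{\cat{E}}$-acyclic in all degrees $\le -1$. Its top differential $Y'^0 \oplus Z^0 \oplus Z^{-1} \to Z^0$ becomes $(0,1,0)$ after an automorphism of the source. Hence $C \cong T \oplus (Z^0 \xrightarrow{1} Z^0)$, with $T$ concentrated in non-positive degrees and $T^0 = Y'^0 \oplus Z^{-1}$. By \cref{Extacyc_htpc_retract}, $T$ is again left $\ext{\cat{E}}$-acyclic in negative degrees, so $T$ is an $\ext{\cat{E}}$-resolution. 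In the bounded case $T$ is bounded whenever $Y$ and $Z$ are.

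It remains to identify $\coker(\yoneda{d_T^{-1}})$ with $\sE$, and this is where I expect the difficulty. Applying Yoneda and the long exact cohomology sequence gives a surjection $\coker(\yoneda{d_T^{-1}}) \to \sE$. Its kernel is the image of $H^{-1}(\yoneda{Z}) \to H^{-1}(\yoneda{Y})$'s cokernel, i.e. the cycles of $\yoneda{Z^{-1}}$ not lifting to $Y$. This need not vanish, because left $\ext{\cat{E}}$-acyclicity gives lifts only after an $\cat{E}$-deflation, not Hom-acyclicity.

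My first attempt to remove this kernel would use that $\sE$ is finitely presented, since it lies in $\mod{\cat{E}}$. Then the kernel of $\yoneda{T^0} \to \sE$ is finitely generated, say by $\yoneda{P} \to \yoneda{T^0}$ with $P = Z^{-1}$-cycles data, and I would enlarge $T^{-1}$ by a summand $P$ mapping onto these generators. The key check is that the enlarged complex is still left $\ext{\cat{E}}$-acyclic in degree $-1$. For this I would use the left $\ext{\cat{E}}$-acyclicity of $Z$ at $-1$ to write each offending cycle, after an $\cat{E}$-deflation, as a boundary from $Z^{-2}$. That lets me extend the component through $P$ to degree $-2$ via \cref{induced_mor_extresn}, as in \cref{horseshoe}.
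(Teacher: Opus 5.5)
There is a genuine gap, and it starts with the construction of $\varphi$, which fails as written. In $Y = Y'\oplus(Z^0\xrightarrow{1}Z^0)$ the new summand $Z^0$ of $Y^0$ is hit by the differential, so it maps to $0$ in $\sF$. Yet you send it to $Z^0$ by the identity. Write $\gamma\colon\yoneda{Z^0}\to\sG$. On that summand $\gamma\circ\yoneda{\varphi^0}$ equals $\gamma\neq 0$, whereas $\pi$ composed with $\yoneda{Y^0}\to\sF$ is $0$ there. Equivalently, on the degree $-1$ copy of $Z^0$ you get $\varphi^0 d_Y^{-1}=\id_{Z^0}$, which factors through $d_Z^{-1}$ only if $\sG=0$. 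So $\varphi^{-1}$ does not exist.

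This step can be repaired: map the new summand by $0$ instead. The top differential $e=(\psi,\pm d_Z^{-1})\colon Y'^0\oplus Z^{-1}\to Z^0$ of the fibre is split epi anyway, because $\yoneda{e}$ is an epimorphism (since $\pi$ is) onto a representable. With the extra zero column you have $(e,0)=(0,1)\circ\begin{psmallmatrix}1-se & s\\ e & 0\end{psmallmatrix}$ for a section $s$ of $e$. Hence $Z^0\xrightarrow{1}Z^0$ splits off without assuming that idempotents split.

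The substantive gap is the last step, which you only outline. The fibre $T$ is an $\ext{\cat{E}}$-resolution of $\coker(\yoneda{d_T^{-1}})$. This surjects onto $\sE$ with kernel $\coker(H^{-1}(\yoneda{Y})\to H^{-1}(\yoneda{Z}))$, which is locally effaceable but in general nonzero. So up to that point you only know $\sE$ modulo effaceable functors, which is exactly what the lemma must not lose. The check you single out is also not enough. Adding $P$ to $T^{-1}$ forces new terms in every degree $\le -2$; already the kernel of $B\to P$ creates new cycles in degree $-2$. Left $\ext{\cat{E}}$-acyclicity has to be verified in all these degrees.

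Your repair can be completed as follows. Let $g=(0,z)\colon P\to T^0$ with $z\colon P\to Z^{-1}$ a cycle, and $zp=d_Z^{-2}h$ for an $\cat{E}$-deflation $p\colon B\to P$. Apply \cref{induced_mor_extresn} to the $\cat{E}$-acyclic complex $0\to\ker p\to B\xrightarrow{p}P\to 0$ (with $P$ in degree $0$) and the square formed by $g$ and $(0,\pm h)\colon B\to Y^{-1}\oplus Z^{-2}=T^{-1}$. This gives $\Phi\colon W\to T$ with $\Phi^0=g$. Then $\cone(\Phi)$ is an $\ext{\cat{E}}$-resolution by \cref{ConeExtacyclic,ext_acyc_kcat_dcat}, and its $(-1)$st differential $(g,d_T^{-1})$ presents $\sE$. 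The bounded case needs an extra check.

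The paper needs no such correction, and this is the idea your route is missing. It starts from a presentation $d_X^{-1}$ of $\sE$ and forms the horseshoe presentation $\begin{psmallmatrix} d_X^{-1} & f^0\\ 0 & d_Z^{-1}\end{psmallmatrix}$ of $\sF$. It extends this to an $\ext{\cat{E}}$-resolution of $\sF$ by \cref{extending_mor_to_ext_resn}, which is where $\sF\in\Pmod{\cat{E}}$ is used. It then maps this resolution to $Z$ by the projection via \cref{induced_mor_extresn}. As a result the top differential of the fibre is literally a projection, and its $(-1)$st differential presents $\sE$ on the nose, with the $Z^{-2}$-part absorbed by $f^{-1}$.
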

\begin{proof}
Let $\sE \to \sF \to \sG$ be a $\mod{\cat{E}}$-conflation. We assume $\sF, \sG \in \Pmod{\cat{E}}$ and $\sG$ admits an $\ext{\cat{E}}$-resolution $Z$. As $\sE$ is finitely presented, there exists a morphism $d_X^{-1} \colon X^{-1} \to X^0$ such that $\sE \cong \coker(\yoneda{d_X^{-1}})$. By the same argument as in the proof of \cref{horseshoe}, we obtain from $d_X^{-1}$ and $Z$ a commutative square
\begin{equation*}
\begin{tikzcd}
Z^{-2} \ar[r,"{-d_Z^{-2}}"] \ar[d,"{f^{-1}}"] & Z^{-1} \ar[d,"{f^0}"] \\
X^{-1} \ar[r,"{d_X^{-1}}"] & X^0 \nospacepunct{,}
\end{tikzcd}
\end{equation*}
such that $\sF \cong \coker(\yoneda{\begin{psmallmatrix} d_X^{-1} & f^0 \\ 0 & d_Z^{-1} \end{psmallmatrix}})$. Using \cref{extending_mor_to_ext_resn}, there exists a split epimorphism $t$ and an $\ext{\cat{E}}$-resolution $Y$ such that
\begin{equation*}
d_Y^{-1} = \begin{psmallmatrix} t & 0 & 0 \\ 0 & d_X^{-1} & f^0 \\ 0 & 0 & d_Z^{-1} \end{psmallmatrix}\,.
\end{equation*}
By replacing $d_X^{-1}$ by $t \oplus d_X^{-1}$, so may assume $d_Y^{-1} = \begin{psmallmatrix} d_X^{-1} & f^0 \\ 0 & d_Z^{-1} \end{psmallmatrix}$. From \cref{induced_mor_extresn} we obtain a complex $W$ with $d_W^{-1} = d_Y^{-1}$, an $\cat{E}$-quasi-isomorphism $W \to X$ and a morphism of complexes
\begin{equation*}
\begin{tikzcd}[ampersand replacement=\&]
\cdots \ar[r] \& W^{-2} \ar[r,"d_Y^{-2}"] \ar[d] \& X^{-1} \oplus Z^{-1} \ar[r,"{\begin{psmallmatrix} d_X^{-1} & f^0 \\ 0 & d_Z^{-1} \end{psmallmatrix}}"] \ar[d,"{\begin{psmallmatrix} 0 & 1 \end{psmallmatrix}}"] \&[+2em] X^0 \oplus Z^0 \ar[r] \ar[d,"{\begin{psmallmatrix} 0 & 1 \end{psmallmatrix}}"] \& 0 \ar[r] \ar[d] \& \cdots \\
\cdots \ar[r] \& Z^{-2} \ar[r,"d_Z^{-2}"] \& Z^{-1} \ar[r] \& Z^0 \ar[r] \& 0 \ar[r] \& \cdots \nospacepunct{.}
\end{tikzcd}
\end{equation*}
The desuspensed cone of this morphism is left $\ext{\cat{E}}$-acyclic in negative degrees and, after removing $Z^0 \xrightarrow{1} Z^0$, it is zero in positive degree. It is straightforward to check that its $(-1)$st differential is a presentation for $\sE$. Hence $\sE \in \Pmod{\cat{E}}$. It remains to observe that if $W$ and $Z$ were bounded, then so is their mapping cone. Hence $\sF, \sG \in \Pmod[b]{\cat{E}}$ implies $\sE \in \Pmod[b]{\cat{E}}$.
\end{proof}

The Yoneda functor restricts to functors $\cat{E} \to \Pmod{\cat{E}}$ and $\cat{E} \to \Pmod[b]{\cat{E}}$. However, these functors need not be (fully) exact embeddings, as $\cat{E}$-deflations need not be mapped to epimorphisms in $\Mod{\cat{E}}$. To resolve this, we consider localizations of $\Pmod{\cat{E}}$ and $\Pmod[b]{\cat{E}}$. 

\subsection{Localization by deflation-percolating subcategories}

We recall the localization of exact categories as introduced in \cite{Henrard/vanRoosmalen:2020a}. This construction yields an (inflation/deflation-)exact structure on the localization. However, we will not consider the induced exact structure, as it is not compatible with the exact structure on the localization of $\Mod{\cat{A}}$; see \cref{difference_exact_on_Qlcat}. 

Let $\cat{E}$ be an exact category. A full subcategory $\cat{C} \subseteq \cat{E}$ is \emph{deflation-percolating}, if it is a Serre subcategory and every morphism $L \to M$ in $\cat{E}$ with $M \in \cat{C}$ is $\cat{E}$-admissible; that is, it factors as an $\cat{E}$-deflation followed by an $\cat{E}$-inflation.

\begin{remark}
This is the definition of deflation-percolating following \cite{Henrard/Kvamme/vanRoosmalen:2022}. In \cite[Definition~6.1]{Henrard/vanRoosmalen:2020a} this notion is called admissibly deflation-percolating, and deflation-percolating is a weaker notion; see \cite[Lemma~6.3]{Henrard/vanRoosmalen:2020a}.
\end{remark}

Let $\cat{C}$ be a deflation-percolating subcategory of an exact category $\cat{E}$. The class $S_\cat{C}$ of finite compositions of $\cat{E}$-deflations with kernel in $\cat{C}$ and $\cat{E}$-inflations with cokernel in $\cat{C}$ is a right multiplicative system, also called a right calculus of fractions; see \cite[Theorem 6.14]{Henrard/vanRoosmalen:2020a}. Then
\begin{equation*}
\cat{E}/\cat{C} \colonequals \cat{E}[S_\cat{C}^{-1}]
\end{equation*}
is the \emph{localization of $\cat{E}$ by $\cat{C}$}. 

\begin{lemma} \label{deflclsd_loc_embedding}
Let $\cat{E} \subseteq \cat{F}$ be a deflation-closed, fully exact subcategory of an exact category $\cat{F}$. Let $\cat{C} \subseteq \cat{E}$ be a subcategory that is deflation-percolating in $\cat{F}$. Then $\cat{C}$ is deflation-percolating in $\cat{E}$ and the canonical functor $\cat{E}/\cat{C} \to \cat{F}/\cat{C}$ is an embedding that is full.
\end{lemma}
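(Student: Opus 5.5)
Throughout, $\cat{C}$ is deflation-percolating in $\cat{F}$ and $\cat{E}$ is a deflation-closed, fully exact subcategory of $\cat{F}$. The plan has two parts. First, show that $\cat{C}$ inherits the deflation-percolating axioms in $\cat{E}$. Second, compare the two right calculi of fractions $S^{\cat{E}}_\cat{C}$ and $S^{\cat{F}}_\cat{C}$. Faithfulness and fullness of $\cat{E}/\cat{C} \to \cat{F}/\cat{C}$ will come from the fact that every roof in $\cat{F}$ with target in $\cat{E}$ can be replaced by one living in $\cat{E}$.

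\emph{Step 1: $\cat{C}$ is deflation-percolating in $\cat{E}$.}

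For the Serre property, take an $\cat{E}$-conflation. It is an $\cat{F}$-conflation, so the Serre property of $\cat{C}$ in $\cat{F}$ transfers directly, since all the objects involved lie in $\cat{E}$.

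For admissibility, let $f\colon L \to M$ be a morphism in $\cat{E}$ with $M \in \cat{C}$. In $\cat{F}$ it factors as $L \defl[\cat{F}] I \infl[\cat{F}] M$. Because $\cat{C}$ is Serre in $\cat{F}$ and $M \in \cat{C}$, we get $I \in \cat{C} \subseteq \cat{E}$, and also $\coker(I \to M) \in \cat{C} \subseteq \cat{E}$. So $I \to M$ is an $\cat{E}$-inflation by full exactness. The kernel of $L \to I$ lies in $\cat{E}$ because $\cat{E}$ is deflation-closed, so $L \to I$ is an $\cat{E}$-deflation.

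The same argument gives the key comparison of calculi: a morphism of $\cat{E}$ is an $\cat{E}$-deflation with kernel in $\cat{C}$ if and only if it is such an $\cat{F}$-deflation, and likewise for inflations with cokernel in $\cat{C}$. Hence $S^{\cat{E}}_\cat{C} \subseteq S^{\cat{F}}_\cat{C}$, and the canonical functor exists.

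\emph{Step 2: the key lemma, and the main obstacle.} The claim to prove is that every $s \colon X \to M$ in $S^{\cat{F}}_\cat{C}$ with $M \in \cat{E}$ can be precomposed with some $t \colon X' \to X$ so that $st \in S^{\cat{E}}_\cat{C}$ and $X' \in \cat{E}$.

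For a single $\cat{F}$-deflation with kernel $K \in \cat{C}$, nothing needs to be done: since $\cat{E}$ is deflation-closed, the source lies in $\cat{E}$ (it is an extension of $M$ by $K$; more carefully, one uses extension-closure of the fully exact subcategory).

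For an inflation $X \infl[\cat{F}] M$ with cokernel in $\cat{C}$, we have $X = \ker(M \to C)$. This is a deflation between objects of $\cat{E}$, so $X \in \cat{E}$.

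For composites, induct on the length, using that the sources stay in $\cat{E}$ at each stage. Plausibly one may even take $t = \id$ throughout. This step, in particular the deflation case, is where I expect the main difficulty.

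\emph{Step 3: fullness and faithfulness.} Both follow from the right calculus of fractions in $\cat{F}$ together with Step 2.

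For fullness, a morphism in $\cat{F}/\cat{C}$ between objects of $\cat{E}$ is a roof $A \xleftarrow{s} X \to B$ with $s \in S^{\cat{F}}_\cat{C}$. By Step 2, we can take $X \in \cat{E}$ and $s \in S^{\cat{E}}_\cat{C}$, so the roof already lies in $\cat{E}/\cat{C}$.

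For faithfulness, suppose a morphism $f$ of $\cat{E}$ becomes zero in $\cat{F}/\cat{C}$. Then there is $s \in S^{\cat{F}}_\cat{C}$ with $fs = 0$. Step 2 makes $s \in S^{\cat{E}}_\cat{C}$, so $f = 0$ in $\cat{E}/\cat{C}$. Applying this to differences of roofs gives faithfulness on roofs.

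Finally, the functor is injective on objects, since objects are unchanged under localization. So it is an embedding that is full.
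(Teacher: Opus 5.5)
Your proof is correct and follows the paper's argument. Your Step 1 matches the paper's transfer of the Serre and admissibility conditions. The paper then proves right cofinality with $t=\mathrm{id}$, showing that any $s\in S_{\cat{C}}$ with target in $\cat{E}$ has source in $\cat{E}$ by deflation-closure and extension-closure, and cites Krause's cofinality lemma, which your Step~3 reproves by hand. The one real difference is that the paper uses the Henrard--van Roosmalen fact that each $s\in S_{\cat{C}}$ is a deflation with kernel in $\cat{C}$ followed by an inflation with cokernel in $\cat{C}$, where you induct on the length of the composite instead. Your worry in Step~2 is unfounded: the deflation case is settled by extension-closure, as your parenthetical says, not by deflation-closure.
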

\begin{proof}
We first show that $\cat{C}$ is deflation-percolating in $\cat{E}$. As $\cat{C}$ is Serre in $\cat{F}$ and $\cat{E} \subseteq \cat{F}$ is a fully exact subcategory, $\cat{C}$ is Serre in $\cat{E}$ as well. Let $f \colon L \to M$ be a morphism in $\cat{E}$ with $M \in \cat{C}$. By assumption, $f$ is $\cat{F}$-admissible; that is there exists a factorization
\[
\begin{tikzcd}
& L \ar[dr,defl=\cat{F},"p" {near start,swap}] \ar[rr,"f"] & & M \ar[dr,defl=\cat{F},"q" {near start,swap}] \\
K \ar[ur,infl=\cat{F},"i" {near end,swap}] & & I \ar[ur,infl=\cat{F},"j" {near end,swap}] & & C \nospacepunct{,}
\end{tikzcd}
\]
where $(i,p)$ and $(j,q)$ are $\cat{F}$-conflations. As $M \in \cat{C}$ and $\cat{C}$ is Serre, we also have $I,C \in \cat{C}$. As $L,I \in \cat{E}$ and $\cat{E} \subseteq \cat{F}$ is deflation-closed, we have $K \in \cat{E}$. In particular, $(i,p)$ and $(j,q)$ are $\cat{E}$-conflations.

For the second claim it is enough to show that $\cat{E} \subseteq \cat{F}$ is right cofinal with respect to the right multiplicative system $S_\cat{C}$ in $\cat{F}$ by \cite[Lemma~1.2.5]{Krause:2022}; $\cat{E} \subseteq \cat{F}$ is called \emph{right cofinal with respect to $S_\cat{C}$} if for every $s \colon M \to N$ in $S_\cat{C}$ with $N \in \cat{E}$, there exists a $t \colon L \to M$ with $L \in \cat{E}$ such that $st \colon L \to N$ in $S_\cat{C}$. 

Let $s \colon M \to N$ in $S_\cat{C}$ with $N \in \cat{E}$. By \cite[Section~6.3]{Henrard/vanRoosmalen:2020a}, the morphism $s$ is $\cat{F}$-admissible such that $C=\coker(s)$ and $K=\ker(s)$ are in $\cat{C}$. This yields two $\cat{F}$-conflations
\[
\begin{tikzcd}
K \ar[r,infl=\cat{F}] & M \ar[r,defl=\cat{F}] & I
\end{tikzcd}
\quad \text{and} \quad
\begin{tikzcd}
I \ar[r,infl=\cat{F}] & N \ar[r,defl=\cat{F}] & C
\end{tikzcd}
\,.
\]
As $\cat{E}$ is deflation-closed in $\cat{F}$ and $N,C \in \cat{E}$, we also have $I \in \cat{E}$. As $\cat{E}$ is extension-closed in $\cat{F}$ and $K, I \in \cat{E}$, it follows that $M \in \cat{E}$. For $t = \id_M$, the composition $st \colon M \to N$ lies in $S_\cat{C}$ with $M,N \in \cat{E}$. Hence $\cat{E} \subseteq \cat{F}$ is right cofinal in $S_\cat{C}$.
\end{proof}

\subsection{Effaceable functors}

Let $\cat{E}$ be a small exact category. An object $\sF \in \Mod{\cat{E}}$ is \emph{effaceable}, if there exists an $\cat{E}$-deflation $d$ such that $\sF \cong \coker(\yoneda{d})$. We denote the strictly full subcategory of $\Mod{\cat{E}}$ of effaceable objects by $\eff{\cat{E}}$. The effaceable functors are the finitely presented objects of a bigger class of functors, namely, the locally effaceable functors. An object $\sF \in \Mod{\cat{E}}$ is \emph{locally effaceable} if for any $A \in \cat{E}$ and $x \in \sF(A)$, there exists an $\cat{E}$-deflation $p \colon B \to A$ such that $\sF(p)(x) = 0$. We denote the strictly full subcategory of locally effaceable objects by $\Eff{\cat{E}}$. One has
\begin{equation} \label{eff_is_Eff_cap_mod}
\eff{\cat{E}} = \Eff{\cat{E}} \cap \mod{\cat{E}}\,;
\end{equation}
see \cite[Lemma~2.13]{Enomoto:2021} and also \cite[Proposition~3.20]{Henrard/Kvamme/vanRoosmalen:2022}.

\begin{remark}
The notation for (locally) effaceable functors is not consistent in the literature. In \cite[2.2]{Grothendieck:1957} and \cite[Appendix~A]{Keller:1990} the locally effaceable functors are called effaceable, while in \cite[Definition~3.19]{Henrard/Kvamme/vanRoosmalen:2022} the locally effaceable functors are called weakly effaceable. Our notation follows \cite[Section~2.3]{Krause:2022}.
\end{remark}

By \cite[Appendix~A]{Keller:1990}, the subcategory $\Eff{\cat{E}} \subseteq \Mod{\cat{E}}$ is Serre, and, by \cite[Proposition~3.23, Corollary~3.24]{Henrard/Kvamme/vanRoosmalen:2022}, the subcategory $\eff{\cat{E}} \subseteq \mod{\cat{E}}$ is deflation-percolating. Hence the localizations exist, and
\begin{equation*}
\mod{\cat{E}}/\eff{\cat{E}} \to \Mod{\cat{E}}/\Eff{\cat{E}}
\end{equation*}
is a full embedding into an abelian category. Moreover, $\mod{\cat{E}}/\eff{\cat{E}}$ viewed as a subcategory of $\Mod{\cat{E}}/\Eff{\cat{E}}$ is extension-closed; this follows from \cite[Theorem~3.4]{Nordskova:2024}.

We denote by $\Qlcat{\cat{E}}$ the exact category with the underlying additive category $\mod{\cat{E}}/\eff{\cat{E}}$ equipped with the fully exact structure. This is the \emph{left quotient category} introduced in \cite[Section~3]{Rump:2020}. By \cite[Theorem~4]{Rump:2020}, $\cat{E}$ is semi-resolving in $\Qlcat{\cat{E}}$.

\begin{remark}
By \cite[Appendix~A]{Keller:1990}, the localization $\Mod{\cat{E}}/\Eff{\cat{E}}$ is equivalent to the subcategory of left exact functors of $\Mod{\cat{E}}$. By \cite[Theorem~3.4]{Nordskova:2024}, the subcategory $\mod{\cat{E}}/\eff{\cat{E}}$ can be identified with the subcategory of compact objects. 
\end{remark}

\begin{remark} \label{difference_exact_on_Qlcat}
The fully exact structure on $\mod{\cat{E}}/\eff{\cat{E}}$ need not coincide with the exact structure induced by the localization. The exact structure on $\Qlcat{\cat{E}}$ is the maximal exact structure, it consists of all kernel-cokernel pairs in $\Qlcat{\cat{E}}$. However, there exist kernel-cokernel pairs in $\Qlcat{\cat{E}}$ which are not induced from a conflation in $\mod{\cat{E}}$. For example, for $\sF \in \Pmod{\cat{E}}$ with an $\ext{\cat{E}}$-resolution $X$ the sequence
\begin{equation*}
\coker(d_X^{-2}) \to \yoneda{X^{0}} \to \sF
\end{equation*}
is a kernel-cokernel pair in $\mod{\cat{E}}/\eff{\cat{E}}$ by \cref{generating_Rmod}. However, it is not a kernel-cokernel pair in $\mod{\cat{E}}$ as the morphism $\coker(d_X^{-2}) \to \ker(\yoneda{X^0} \to \sF)$ need not be an isomorphism in $\mod{\cat{E}}$.
\end{remark}

\begin{corollary} \label{eff_defl_percolating}
Let $\cat{E}$ be a small exact category. Then $\eff{\cat{E}}$ is a deflation-percolating subcategory of $\Pmod{\cat{E}}$ and of $\Pmod[b]{\cat{E}}$.
\end{corollary}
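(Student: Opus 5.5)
Apply \cref{deflclsd_loc_embedding} with $\cat{F} = \mod{\cat{E}}$ and $\cat{C} = \eff{\cat{E}}$. The cited result of Henrard--Kvamme--van Roosmalen says $\eff{\cat{E}}$ is deflation-percolating in $\mod{\cat{E}}$. By \cref{Pcat_defl_clsd}, both $\Pmod{\cat{E}}$ and $\Pmod[b]{\cat{E}}$ are deflation-closed fully exact subcategories of $\mod{\cat{E}}$; extension-closedness comes from \cref{horseshoe}. Hence the hypotheses of \cref{deflclsd_loc_embedding} reduce to one remaining claim: $\eff{\cat{E}} \subseteq \Pmod[b]{\cat{E}} \subseteq \Pmod{\cat{E}}$. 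Given that inclusion, the first part of \cref{deflclsd_loc_embedding} yields that $\eff{\cat{E}}$ is deflation-percolating in each subcategory.

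The main step is to show that every effaceable functor admits a bounded $\ext{\cat{E}}$-resolution. Let $\sF \cong \coker(\yoneda{d})$ with $d \colon M \to N$ an $\cat{E}$-deflation, and let $i \colon K \to M$ be its kernel, which exists because $(i,d)$ is an $\cat{E}$-conflation. I propose the complex
\begin{equation*}
X = (\cdots \to 0 \to K \xrightarrow{i} M \xrightarrow{d} N \to 0 \to \cdots)
\end{equation*}
with $N$ in degree $0$. This complex is concentrated in non-positive degrees and bounded. We must check left $\ext{\cat{E}}$-acyclicity in degrees $-1$ and $-2$.

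\begin{itemize}
\item In degree $-1$, the sequence $K \to M \to N$ is an $\cat{E}$-conflation, so it is $\cat{E}$-acyclic, and hence left $\ext{\cat{E}}$-acyclic by \cref{connection_diff_acyclic}.
\item In degree $-2$, the sequence $0 \to K \xrightarrow{i} M$ is left $\ext{\cat{E}}$-acyclic by the last assertion of \cref{mono_ext_acyc_hom_acyc}, because $i$ is a monomorphism.
\item In degrees $\leq -3$ all terms are zero, so the condition holds trivially.
\end{itemize}

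Since $\coker(\yoneda{d_X^{-1}}) = \coker(\yoneda{d}) \cong \sF$, we get $\sF \in \Pmod[b]{\cat{E}}$.

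I expect the only subtle point to be the Serre property. One might worry that it must be rechecked inside the smaller categories, but the proof of \cref{deflclsd_loc_embedding} already handles this. It uses that $\cat{E}$ is a fully exact subcategory, so conflations in $\Pmod{\cat{E}}$ are exactly the $\mod{\cat{E}}$-conflations with terms in $\Pmod{\cat{E}}$. Admissibility of morphisms into $\eff{\cat{E}}$ then descends because the subcategory is deflation-closed. No further obstacle should arise, and the same argument applies verbatim to $\Pmod[b]{\cat{E}}$.
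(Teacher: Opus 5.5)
Your proposal is correct and follows the paper's proof, which likewise applies \cref{deflclsd_loc_embedding} with $\mod{\cat{E}}$ as the ambient category, together with \cref{Pcat_defl_clsd}. You also check explicitly that every effaceable functor has the bounded $\ext{\cat{E}}$-resolution $K \to M \to N$, giving $\eff{\cat{E}} \subseteq \Pmod[b]{\cat{E}} \subseteq \Pmod{\cat{E}}$; this inclusion is a hypothesis of that lemma which the paper's one-line proof leaves implicit.
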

\begin{proof}
This follows from \cref{deflclsd_loc_embedding} as $\eff{\cat{E}} \subseteq \mod{\cat{E}}$ is deflation-percolating, and $\Pmod{\cat{E}}$ and $\Pmod[b]{\cat{E}}$ are deflation-closed, fully exact subcategories of $\mod{\cat{E}}$ by \cref{Pcat_defl_clsd}.
\end{proof}

\section{In the left quotient category} \label{sec:rclosure}

We have shown that $\eff{\cat{E}}$ is deflation-percolating in the deflation-closed fully exact subcategories
\begin{equation*}
\Pmod[b]{\cat{E}} \subseteq \Pmod{\cat{E}} \subseteq \mod{\cat{E}}\,.
\end{equation*}
By \cref{deflclsd_loc_embedding}, we obtain a chain of subcategories
\begin{equation*} \label{chain_subcat_loc}
\Pmod[b]{\cat{E}}/\eff{\cat{E}} \subseteq \Pmod{\cat{E}}/\eff{\cat{E}} \subseteq \Qlcat{\cat{E}}\,.
\end{equation*}
We want to equip each localization in this chain with an exact structure. 

First, we show that the Yoneda embedding sends left $\ext{\cat{E}}$-acyclic sequences to acyclic sequences in the abelian category $\Mod{\cat{E}}/\eff{\cat{E}}$. In particular, $\ext{\cat{E}}$-resolutions become resolutions.

\begin{lemma} \label{ext_acyc_in_loc_acyc}
Let $\cat{E}$ be a small exact category and $L \xrightarrow{f} M \xrightarrow{g} N$ a sequence in $\cat{E}$ with $gf = 0$. The sequence is left $\ext{\cat{E}}$-acyclic, if and only if $\yoneda{L} \to \yoneda{M} \to \yoneda{N}$ is acyclic in $\Mod{\cat{E}}/\Eff{\cat{E}}$, or equivalently $\Qlcat{\cat{E}}$-acyclic.
\end{lemma}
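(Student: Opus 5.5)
The plan is to translate acyclicity in the Serre quotient $\Mod{\cat{E}}/\Eff{\cat{E}}$ into a statement about homology in $\Mod{\cat{E}}$. Since the quotient functor $Q \colon \Mod{\cat{E}} \to \Mod{\cat{E}}/\Eff{\cat{E}}$ is exact with kernel $\Eff{\cat{E}}$, it commutes with forming homology. Hence $\yoneda{L} \to \yoneda{M} \to \yoneda{N}$ is acyclic in the quotient if and only if the homology functor
\begin{equation*}
\sH \colonequals \ker(\yoneda{g})/\operatorname{im}(\yoneda{f})
\end{equation*}
is locally effaceable. So the first step reduces the statement to showing that $L \xrightarrow{f} M \xrightarrow{g} N$ is left $\ext{\cat{E}}$-acyclic if and only if $\sH \in \Eff{\cat{E}}$.

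The second step unwinds the definitions and shows they are literally the same condition. An element of $\sH(A)$ is represented by some $a \colon A \to M$ with $ga = 0$. For an $\cat{E}$-deflation $p \colon B \to A$, the element $\sH(p)(a)$ is represented by $ap$, and it vanishes precisely when $ap = fh$ for some $h \colon B \to L$. Local effaceability says that for every $A$ and every such class there is a deflation $p$ killing it; this is exactly the condition in the definition of left $\ext{\cat{E}}$-acyclicity. One direction needs a little care: every element of $\sH(A)$ is the image of an element of $\ker(\yoneda{g})(A)$, because homology is computed pointwise. Both directions then follow immediately.

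For the last clause, "equivalently $\Qlcat{\cat{E}}$-acyclic", I would use that $\Qlcat{\cat{E}}$ is an extension-closed full subcategory of the abelian category $\Mod{\cat{E}}/\Eff{\cat{E}}$ carrying the fully exact (maximal) structure. It then has to be checked that a sequence of objects of $\Qlcat{\cat{E}}$ is $\Qlcat{\cat{E}}$-acyclic exactly when it is acyclic in the ambient abelian category. The forward direction is clear, since conflations of $\Qlcat{\cat{E}}$ are short exact in the ambient category.

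I expect the converse to be the main obstacle. The image and the kernel of the middle map must lie in $\Qlcat{\cat{E}}$ and give conflations there. I would try to show that $\Qlcat{\cat{E}}$ is closed under kernels of epimorphisms between its objects, as well as under images, using the semi-resolving property from \cite[Theorem~4]{Rump:2020}. Alternatively, I would take this equivalence of the two acyclicity notions on $\Qlcat{\cat{E}}$ as part of Rump's description of the left quotient category, and restrict the argument to representable objects, where the image of $\yoneda{f}$ in the quotient is $Q$ applied to a finitely presented functor and hence lies in $\mod{\cat{E}}/\eff{\cat{E}}$.
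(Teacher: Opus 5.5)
Your first two steps are the paper's proof. Its $\sK = \ker(\hat q)$, with $\hat q \colon \coker(\yoneda{f}) \to \image(\yoneda{g})$, is your homology functor $\ker(\yoneda{g})/\image(\yoneda{f})$, and both directions use the same element chase.

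\textbf{The gap is in the final clause.} It cannot be closed along the lines you suggest, because for an isolated three-term sequence the clause is false. Here is a counterexample.
\begin{itemize}
\item Take $R = k[x,y_1,y_2,\dots]/(xy_i : i \geq 1)$, and let $\cat{E}$ be the finitely generated free $R$-modules with the split exact structure.
\item An additive functor sends split deflations to split monomorphisms, so $\Eff{\cat{E}} = 0$. Hence $\Mod{\cat{E}}/\Eff{\cat{E}}$ is the category of $R$-modules, and $\Qlcat{\cat{E}}$ is the category of finitely presented $R$-modules.
\item Since $\ker(y_1 \cdot) = xR$, the sequence $R \xrightarrow{x} R \xrightarrow{y_1} R$ is left Hom-acyclic. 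Hence it is left $\ext{\cat{E}}$-acyclic and exact.
\item Now take any factorization witnessing $\Qlcat{\cat{E}}$-acyclicity at the middle term. There $\yoneda{L} \to K$ is an epimorphism and $K \to \yoneda{M}$ a monomorphism of the ambient abelian category. So $K \cong xR \cong R/(y_1,y_2,\dots)$.
\item This $K$ is not finitely presented, because the annihilator $(y_1,y_2,\dots)$ of $x$ is not finitely generated.
\end{itemize}
Consequently $\Qlcat{\cat{E}}$ is closed neither under images nor under kernels of epimorphisms (here $xR = \ker(R \to R/xR)$). Also, $\image(\yoneda{f})$ is in general only a finitely generated functor, not $Q$ of a finitely presented one. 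The semi-resolving property does not help, since it only concerns epimorphisms between representables.

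\textbf{What is true.} You already have the implication from $\Qlcat{\cat{E}}$-acyclicity to acyclicity in the quotient. In the other direction, the right half of the factorization works: exactness at $\yoneda{M}$ gives $Q(\image \yoneda{g}) \cong Q(\coker \yoneda{f}) \in \Qlcat{\cat{E}}$, with cokernel $Q(\coker\yoneda{g}) \in \Qlcat{\cat{E}}$.

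The left half needs $Q(\image\yoneda{f})$ and $Q(\ker\yoneda{f})$ to lie in $\Qlcat{\cat{E}}$. This is only guaranteed when the sequence continues to the left with further left $\ext{\cat{E}}$-acyclic terms. For instance, inside an $\ext{\cat{E}}$-resolution $X$ one has $Q(\image \yoneda{d_X^{n-1}}) \cong Q(\coker \yoneda{d_X^{n-2}})$ and $Q(\ker\yoneda{d_X^{n-1}}) \cong Q(\coker\yoneda{d_X^{n-3}})$ for $n \leq 0$.

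The paper's own proof does not address the clause at all. This restricted form, together with the easy direction, is all that is used later (e.g.\ in \cref{generating_Rmod,Rcat_resolving}). So the right fix is to weaken the clause to complexes of this kind, rather than to look for closure properties of $\Qlcat{\cat{E}}$.
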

\begin{proof}
We consider the commutative diagram
\begin{equation*}
\begin{tikzcd}
\yoneda{L} \ar[r,"\yoneda{f}"] & \yoneda{M} \ar[dr,two heads,"q" swap] \ar[rrr,"\yoneda{g}"] \ar[drr,two heads] &[-2em] &[-1em] &[-2em] \yoneda{N} \nospacepunct{.} \\
& & \coker(\yoneda{f}) \ar[r,two heads,"\hat{q}" swap] & \image(\yoneda{g}) \ar[ur,tail,"i"]
\end{tikzcd}
\end{equation*}
Let $j \colon \sK \to \coker(\yoneda{f})$ be the kernel of $\hat{q}$. Then $\yoneda{L} \to \yoneda{M} \to \yoneda{N}$ is acyclic if and only if $\sK$ is locally effaceable. 

We assume $L \to M \to N$ is left $\ext{\cat{E}}$-acyclic. Let $A \in \cat{E}$ and $x \in \sK(A)$. As $q$ is an epimorphism, there exists an element $a \in \yoneda{M}(A) = \Hom_{\cat{E}}(A,M)$ such that $j(A)(x) = p(A)(a)$. Then
\begin{equation*}
g a = \yoneda{g}(A)(a) = (i\hat{q}q)(A)(a) = (i\hat{q}j)(A)(x) = 0\,.
\end{equation*}
Since the sequence is left $\ext{\cat{E}}$-acyclic in $M$, there exists an $\cat{E}$-deflation $p \colon B \to A$ and a morphism $h \colon B \to L$ such that $a p = f h$. From the naturality we obtain the following commutative diagram
\begin{equation} \label{ext_acyc_in_loc_acyc:diagram}
\begin{tikzcd}[column sep=large]
\sK(A) \ar[r,"j(A)"] \ar[d,"\sK(p)"] & \coker(\yoneda{g})(A) \ar[d,"{\coker(\yoneda{g}(p)}"] & \ar[l,two heads,"q(A)" swap] \yoneda{M}(A) \ar[d,"\yoneda{M}(p)"] \\
\sK(B) \ar[r,"j(B)" swap] & \coker(\yoneda{g})(B) & \yoneda{M}(B) \ar[l,two heads,"q(B)"] & \yoneda{L}(B) \ar[l,"{\yoneda{g}(B)}"] \nospacepunct{.}
\end{tikzcd}
\end{equation}
A diagram chase yields $(j(B) \sK(p))(x) = 0$. 
Hence $\sK(p)(x) = 0$ and $\sK$ is locally effaceable.

For the converse, we assume $\sK$ is locally effaceable. Let $a \colon A \to M$ be a morphism with $ga = 0$. This means $\yoneda{g}(A)(a) = 0$. Hence $(\hat{q} q)(A)(a) = 0$ and there exists $x \in \sK(A)$ such that $j(A)(x) = q(A)(a)$. As $\sK$ is locally effaceable, there exists an $\cat{E}$-deflation $p \colon B \to A$ such that $\sK(p)(x) = 0$. A diagram chase along \cref{ext_acyc_in_loc_acyc:diagram} yields
\begin{equation*}
q(B)(ap) = (q(B) \yoneda{M}(p))(a) = 0\,.
\end{equation*}
Hence there exists $h \in \yoneda{L}(B)$ such that $gh = \yoneda{g}(B)(h) = ap$.
\end{proof}

\begin{lemma} \label{generating_Rmod}
Let $\cat{E}$ be a small exact category. 
\begin{enumerate}
\item For every $\sF \in \Pmod{\cat{E}}$ there exists a $\Qlcat{\cat{E}}$-conflation $\sE \to \yoneda{M} \to \sF$ with $\sE \in \Pmod{\cat{E}}$.
\item For every $\sF \in \Pmod[b]{\cat{E}}$ there exists a $\Qlcat{\cat{E}}$-conflation $\sE \to \yoneda{M} \to \sF$ with $\sE \in \Pmod[b]{\cat{E}}$.
\end{enumerate}
\end{lemma}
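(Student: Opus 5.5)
Take an $\ext{\cat{E}}$-resolution $X$ of $\sF$, so $\sF \cong \coker(\yoneda{d_X^{-1}})$; when $\sF \in \Pmod[b]{\cat{E}}$ choose $X$ bounded. Put $M \colonequals X^0$ and $\sE \colonequals \coker(\yoneda{d_X^{-2}})$. The proposed conflation is
\begin{equation*}
\sE \to \yoneda{X^0} \to \sF\,,
\end{equation*}
where the second map is the canonical projection and the first is induced by $\yoneda{d_X^{-1}}$, which is defined because $d_X^{-1} d_X^{-2} = 0$. This is the sequence appearing in \cref{difference_exact_on_Qlcat}.

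First, $\sE \in \Pmod{\cat{E}}$. The shifted brutal truncation $\susp^{-1} X^{\leqslant -1}$, with $X^{-1}$ in degree $0$, is concentrated in non-positive degrees. In its negative degrees it is left $\ext{\cat{E}}$-acyclic, since this is exactly the acyclicity of $X$ in degrees $\leq -2$. So it is an $\ext{\cat{E}}$-resolution of $\sE$, and it is bounded whenever $X$ is. In particular $\sE$, $\yoneda{X^0}$ and $\sF$ all lie in $\mod{\cat{E}}$, hence in $\Qlcat{\cat{E}}$.

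Second, the sequence is a kernel-cokernel pair in $\Qlcat{\cat{E}}$. This suffices because the exact structure there is the fully exact one from the abelian category $\Mod{\cat{E}}/\Eff{\cat{E}}$, so it is enough to show that
\begin{equation*}
0 \to \sE \to \yoneda{X^0} \to \sF \to 0
\end{equation*}
is short exact after localization. Exactness at $\sF$ and at $\yoneda{X^0}$ already holds in $\Mod{\cat{E}}$, by the definition of $\sF$ as a cokernel and because the image of $\sE \to \yoneda{X^0}$ equals the image of $\yoneda{d_X^{-1}}$, which is $\ker(\yoneda{X^0} \to \sF)$. It remains to show that $\sE \to \yoneda{X^0}$ becomes a monomorphism, i.e.\ that its kernel $\ker(\yoneda{d_X^{-1}})/\image(\yoneda{d_X^{-2}})$ is locally effaceable. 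Since $X$ is left $\ext{\cat{E}}$-acyclic in degree $-1$, the sequence $\yoneda{X^{-2}} \to \yoneda{X^{-1}} \to \yoneda{X^0}$ is acyclic in $\Mod{\cat{E}}/\Eff{\cat{E}}$ by \cref{ext_acyc_in_loc_acyc}. Therefore this homology functor vanishes in the quotient. Alternatively, the computation in the proof of \cref{ext_acyc_in_loc_acyc} gives directly that this homology is locally effaceable, being the kernel $\sK$ considered there. Since the localization functor is exact, the image of the sequence is short exact.

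The main point needing care is the bookkeeping of the ambient exact structures. The sequence is \emph{not} a conflation in $\mod{\cat{E}}$, only in the quotient. One must also confirm that $\Qlcat{\cat{E}}$-conflations are precisely the short exact sequences of $\Mod{\cat{E}}/\Eff{\cat{E}}$ with terms in $\mod{\cat{E}}/\eff{\cat{E}}$. This holds because $\Qlcat{\cat{E}}$ carries the fully exact structure on an extension-closed subcategory, as quoted from \cite{Nordskova:2024}.
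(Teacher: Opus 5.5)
Your proposal is correct and follows the paper's proof. You take an $\ext{\cat{E}}$-resolution $X$ of $\sF$ and use \cref{ext_acyc_in_loc_acyc} to see that $\coker(\yoneda{d_X^{-2}}) \to \yoneda{X^0} \to \sF$ becomes short exact in $\Mod{\cat{E}}/\Eff{\cat{E}}$, with the kernel term resolved by the truncation $\susp^{-1}X^{\leqslant -1}$ (bounded when $X$ is). You just spell out the exactness bookkeeping the paper leaves implicit; one small quibble is that the sequence \emph{need not} be, rather than \emph{is not}, a conflation before localizing.
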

\begin{proof}
Assume that $\sF \in \Pmod{\cat{E}}$ and let $X$ be an $\ext{\cat{E}}$-resolution of $\sF$. By \cref{ext_acyc_in_loc_acyc}
\begin{equation*}
\cdots \to \yoneda{X^{-2}} \xrightarrow{\yoneda{d_X^{-2}}} \yoneda{X^{-1}} \xrightarrow{\yoneda{d_X^{-1}}} \yoneda{X^0} \to \sF \to 0
\end{equation*}
is acyclic in $\Mod{\cat{E}}/\Eff{\cat{E}}$. In particular,
\[
\coker(\yoneda{d_X^{-2}}) \to \yoneda{X^0} \to \sF
\]
is a $\Qlcat{\cat{E}}$-conflation. Clearly, $\coker(\yoneda{d_X^{-2}})$ lies in the desired categories.
\end{proof}

\begin{proposition} \label{loc_in_Qlcat_extclsd}
Let $\cat{E}$ be a small exact category. The categories $\Pmod{\cat{E}}/\eff{\cat{E}}$ and $\Pmod[b]{\cat{E}}/\eff{\cat{E}}$ are extension-closed in $\Qlcat{\cat{E}}$. 
\end{proposition}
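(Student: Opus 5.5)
Given a $\Qlcat{\cat{E}}$-conflation $\sE \to \sF \to \sG$ with $\sE,\sG \in \Pmod{\cat{E}}/\eff{\cat{E}}$, we have to show $\sF$ is isomorphic in $\Qlcat{\cat{E}}$ to an object of $\Pmod{\cat{E}}$. The plan is to replace $\sG$ by a resolution, use the Ext-resolution of $\sG$ to lift the conflation to $\Mod{\cat{E}}$, and then apply \cref{horseshoe}.

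Since $\Mod{\cat{E}}/\Eff{\cat{E}}$ is abelian and the conflation is a short exact sequence there, it defines a class in $\Ext^1(\sG,\sE)$ computed in that localization. By \cref{generating_Rmod} there is a $\Qlcat{\cat{E}}$-conflation $\sE' \to \yoneda{M} \to \sG$ with $\sE' \in \Pmod{\cat{E}}$. The morphism $\sE' \to \sE$ obtained by pulling back is a morphism in the localization. By the right calculus of fractions, it is represented by a roof $\sE' \xleftarrow{s} \sE'' \to \sE$ with $s \in S_{\eff{\cat{E}}}$. Here $\sE''$ can be taken in $\Pmod{\cat{E}}$: by \cref{deflclsd_loc_embedding} and \cref{eff_defl_percolating}, the category $\Pmod{\cat{E}}$ is right cofinal. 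Pulling back $\yoneda{M}$ along $s$ in $\Mod{\cat{E}}$ (equivalently, replacing $\sE'$ by $\sE''$, which changes $\yoneda{M}$ only up to an effaceable kernel) should yield an honest short exact sequence $0 \to \sE'' \to \sM' \to \sG \to 0$ in $\Mod{\cat{E}}$ with $\sM'$ isomorphic to $\yoneda{M}$ in the localization. An easier alternative is to keep $\yoneda{M}$ and form the honest pushout in $\Mod{\cat{E}}$ of $\sE'' \to \yoneda{M}$ (whose cokernel differs from $\sG$ by an effaceable functor) along $\sE'' \to \sE$. The pushout $\sP$ sits in an exact sequence $\sE \to \sP \to \sG' \to 0$ in $\mod{\cat{E}}$, where $\sG' \cong \sG$ in the localization. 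Because pushouts commute with the exact localization functor, $\sP \cong \sF$ in $\Qlcat{\cat{E}}$ by the five lemma in the abelian quotient.

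The main obstacle is to turn this into a short exact sequence in $\Mod{\cat{E}}$ whose end terms both admit Ext-resolutions, so that \cref{horseshoe} applies. The left map $\sE \to \sP$ may fail to be a monomorphism, with effaceable kernel, and the right term may only agree with $\sG$ up to effaceables. I would first try to arrange that the end terms are exactly $\sE \in \Pmod{\cat{E}}$ and a genuine object of $\Pmod{\cat{E}}$, as follows:
\begin{enumerate}
\item Choose the conflation in \cref{generating_Rmod} as $\coker(\yoneda{d_Z^{-2}}) \to \yoneda{Z^0} \to \sG$ for an Ext-resolution $Z$. The cokernel in $\Mod{\cat{E}}$ is then exactly $\sG$.
\item Kill the effaceable kernel of $\sE \to \sP$ by quotienting. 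Since $\eff{\cat{E}}$ is Serre and $\Pmod{\cat{E}}$ is deflation-closed by \cref{Pcat_defl_clsd}, the image of $\sE$ should still lie in $\Pmod{\cat{E}}$, as it is the cokernel of an effaceable subobject in $\mod{\cat{E}}$. Checking this step carefully is where I expect the difficulty to lie.
\end{enumerate}

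With an exact sequence $0 \to \sE_1 \to \sP_1 \to \sG \to 0$ in $\Mod{\cat{E}}$, where $\sE_1,\sG \in \Pmod{\cat{E}}$, \cref{horseshoe} gives $\sP_1 \in \Pmod{\cat{E}}$, and $\sP_1 \cong \sF$ in $\Qlcat{\cat{E}}$. For the bounded case the same argument applies verbatim. The only changes are to use the bounded versions of \cref{generating_Rmod} and \cref{Pcat_defl_clsd}, and to note that the cone produced in \cref{horseshoe} is bounded when $X$, $Z$ and $W$ are.
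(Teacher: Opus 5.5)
Your proposal has a genuine gap at the step that produces the morphism $\sE'\to\sE$ ``obtained by pulling back''. Such a morphism comes from lifting $\yoneda{M}\to\sG$ along $\sF\to\sG$, so it treats $\yoneda{M}$ as projective. Representables are projective in $\Mod{\cat{E}}$, but $\sF\to\sG$ is only an epimorphism after localization, and in $\Qlcat{\cat{E}}$ representables are not projective. Indeed, $\cat{E}\to\Qlcat{\cat{E}}$ is a fully exact embedding, so a non-split $\cat{E}$-conflation $L\to X\to M$ gives a non-split $\Qlcat{\cat{E}}$-conflation $\yoneda{L}\to\yoneda{X}\to\yoneda{M}$. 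Here $\yoneda{X}\to\yoneda{M}$ has a nonzero effaceable cokernel in $\Mod{\cat{E}}$.

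This example also refutes your construction. Take $Z=M$ in degree $0$; then $\sE'=0$ and your pushout is $\yoneda{L}\oplus\yoneda{M}$. Your five-lemma step needs a comparison morphism $\sP\to\sF$ compatible with the identities on $\sE$ and $\sG$, and such a morphism would split the conflation. Pulling back $\sF\to\sG$ along $\yoneda{M}\to\sG$ only gives conflations $\sE\to\sQ\to\yoneda{M}$ and $\sE'\to\sQ\to\sF$, not a map $\sE'\to\sE$.

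Independently, step~(2) is not justified by \cref{Pcat_defl_clsd}. Deflation-closedness gives closure under kernels of deflations, not under quotients by effaceable subobjects. So you have not produced the honest short exact sequence in $\Mod{\cat{E}}$ with both outer terms in $\Pmod{\cat{E}}$ that \cref{horseshoe} requires.

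The missing idea is to pay for the lift with an $\cat{E}$-deflation.
\begin{enumerate}
\item Pull back a $\Qlcat{\cat{E}}$-deflation $\yoneda{\tilde{N}}\to\sG$ with kernel in $\Pmod{\cat{E}}$ along $\sF\to\sG$, and cover the pullback $\sP$ by a $\Qlcat{\cat{E}}$-deflation $\yoneda{N}\to\sP$.
\item The composite $\yoneda{N}\to\sP\to\yoneda{\tilde{N}}$ is then a $\Qlcat{\cat{E}}$-deflation between representables. By full exactness it is the image of an $\cat{E}$-deflation $N\to\tilde{N}$, and $\yoneda{N}\to\sG$ now factors through $\sF$.
\item You must still check that the new kernel $\sK$ lies in $\Pmod{\cat{E}}$. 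It sits in a conflation $\yoneda{K}\to\sK\to\sK'$ with $\sK'\in\Pmod{\cat{E}}$. The paper handles this by pulling back $\yoneda{Z^0}\to\sK'$ along $\sK\to\sK'$, for an $\ext{\cat{E}}$-resolution $Z$ of $\sK'$, and using full exactness once more to see that the pullback is representable.
\end{enumerate}
The paper then iterates this horseshoe step inside $\Qlcat{\cat{E}}$ along an $\ext{\cat{E}}$-resolution of $\sE$. This yields a $\Qlcat{\cat{E}}$-acyclic complex of representables over $\sF$, which is an $\ext{\cat{E}}$-resolution by \cref{ext_acyc_in_loc_acyc}. The argument never needs an honest short exact sequence in $\Mod{\cat{E}}$, and \cref{horseshoe} is not used.
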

\begin{proof}
We let $* \in \{\emptyset, b\}$. Let $\sE \to \sF \to \sG$ be a $\Qlcat{\cat{E}}$-conflation with $\sE, \sG \in \Pmod[*]{\cat{E}}$. We want to show that $\sF \in \Pmod[*]{\cat{E}}$. 

\begin{step} \label{loc_in_Qlcat_extclsd:step_sE_in_E}
If $\sE \cong \yoneda{M}$ and $\sG \in \Pmod[*]{\cat{E}}$, then $\sF \in \Pmod[*]{\cat{E}}$.
\end{step}

Let $Z$ be a (bounded) $\ext{\cat{E}}$-resolution of $\sG$. We consider the commutative diagram
\begin{equation*}
\begin{tikzcd}[row sep=large]
& \coker(\yoneda{d_Z^{-2}}) \ar[r,"="] \ar[d,infl=\Qlcat{\cat{E}}] & \coker(\yoneda{d_Z^{-2}}) \ar[d,infl=\Qlcat{\cat{E}}] \\
\yoneda{M} \ar[r,infl=\Qlcat{\cat{E}}] \ar[d,"="] & \sP \ar[r,defl=\Qlcat{\cat{E}}] \ar[d,defl=\Qlcat{\cat{E}}] & \yoneda{Z^{0}} \ar[d,defl=\Qlcat{\cat{E}}] \\
\yoneda{M} \ar[r,infl=\Qlcat{\cat{E}}] & \sF \ar[r,defl=\Qlcat{\cat{E}}] & \sG
\end{tikzcd}
\end{equation*}
where the bottom right square is a pullback square and each three-term row or column is a $\Qlcat{\cat{E}}$-conflation. By \cite[Proposition~5]{Rump:2020}, the Yoneda functor induces a fully exact embedding $\cat{E} \to \Qlcat{\cat{E}}$. Hence $\sP \cong \yoneda{Y^{0}}$ and by setting $Y^{n} \colonequals Z^n$ for $n < 0$ we obtain an $\ext{\cat{E}}$-resolution of $\sF$. We have shown $\sF \in \Pmod[*]{\cat{E}}$. 

\begin{step} \label{loc_in_Qlcat_extclsd:step_defl}
For any $\Qlcat{\cat{E}}$-deflation $\yoneda{L} \to \sE$, there exists a $\Qlcat{\cat{E}}$-deflation $\yoneda{N} \to \sG$ with kernel in $\Pmod[*]{\cat{E}}/\eff{\cat{E}}$ and a commutative diagram
\begin{equation*}
\begin{tikzcd}[row sep=large]
\yoneda{L} \ar[r,infl=\cat{E}] \ar[d,defl=\Qlcat{\cat{E}}] & \yoneda{L \oplus N} \ar[r,defl=\cat{E}] \ar[d,defl=\Qlcat{\cat{E}}] & \yoneda{N} \ar[d,defl=\Qlcat{\cat{E}}] \\
\sE \ar[r,infl=\Qlcat{\cat{E}}] & \sF \ar[r,defl=\Qlcat{\cat{E}}] & \sG
\end{tikzcd}
\end{equation*}
in which each row is a $\Qlcat{\cat{E}}$-conflation and each vertical morphism a $\Qlcat{\cat{E}}$-deflation.
\end{step}

By \cref{generating_Rmod}, there exists a $\Qlcat{\cat{E}}$-deflation $\tilde{p} \colon \yoneda{\tilde{N}} \to \sG$ with kernel in $\Pmod[*]{\cat{E}}/\eff{\cat{E}}$. We take the pullback of $\tilde{p}$ along $\sF \to \sG$ to obtain the commuting diagram
\begin{equation*}
\begin{tikzcd}[column sep=large,row sep=large]
\sE \ar[r,infl=\Qlcat{\cat{E}}] \ar[d,"="] & \sP \ar[r,defl=\Qlcat{\cat{E}}] \ar[d,defl=\Qlcat{\cat{E}}] & \yoneda{\tilde{N}} \ar[d,defl=\Qlcat{\cat{E}}] \\
\sE \ar[r,infl=\Qlcat{\cat{E}}] & \sF \ar[r,defl=\Qlcat{\cat{E}}] & \sG \nospacepunct{.}
\end{tikzcd}
\end{equation*}
As $\sP \in \Qlcat{\cat{E}}$, there exists a $\Qlcat{\cat{E}}$-deflation $\yoneda{N} \to \sP$. 
By \cite[Proposition~5]{Rump:2020}, the Yoneda functor induces an fully exact embedding $\cat{E} \to \Qlcat{\cat{E}}$ and hence $\yoneda{N} \to \yoneda{\tilde{N}}$ is the image of an $\cat{E}$-deflation $N \to \tilde{N}$. By \cref{loc_in_Qlcat_extclsd:step_sE_in_E} and the snake lemma, the kernel of the composition $\yoneda{N} \to \sG$ lies in $\Pmod[*]{\cat{E}}/\eff{\cat{E}}$. 
We now obtain the commutative diagram
\begin{equation*}
\begin{tikzcd}[column sep=huge,row sep=large]
\yoneda{L} \ar[r,infl=\cat{E}] \ar[d,defl=\Qlcat{\cat{E}}] & \yoneda{L \oplus N} \ar[r,defl=\cat{E}] \ar[d,defl=\Qlcat{\cat{E}}] & \yoneda{N} \ar[d,"="] \\
\sE \ar[d,"="] \ar[r,infl=\Qlcat{\cat{E}}] & \sE \oplus \yoneda{N} \ar[r,defl=\Qlcat{\cat{E}}] \ar[d,defl=\Qlcat{\cat{E}}] & \yoneda{N} \ar[d,defl=\Qlcat{\cat{E}}] \\
\sE \ar[r,infl=\Qlcat{\cat{E}}] & \sF \ar[r,defl=\Qlcat{\cat{E}}] & \sG
\end{tikzcd}
\end{equation*}
where each row is a $\Qlcat{\cat{E}}$-conflation and each vertical morphism is a $\Qlcat{\cat{E}}$-deflation. 

\begin{step}
If $\sE, \sG \in \Pmod{\cat{E}}$, then $\sF \in \Pmod{\cat{E}}$.
\end{step}
Let $X$ be an $\ext{\cat{E}}$-resolution of $\sE$. We set $\sE^n \colonequals \coker(\yoneda{d_X^{n-1}})$ for $n \leq 0$ and $\sF^0 \colonequals \sF$ and $\sG^0 \colonequals \sG$. For $n \leq 0$ we successively apply \cref{loc_in_Qlcat_extclsd:step_defl} to the $\Qlcat{\cat{E}}$-deflation $p^n \colon \yoneda{X^n} \to \sE^n$ and the $\Qlcat{\cat{E}}$-conflation $\sE^n \to \sF^n \to \sG^n$. The kernels of the $\Qlcat{\cat{E}}$-deflations yield a $\Qlcat{\cat{E}}$-conflation $\sE^{n-1} \to \sF^{n-1} \to \sG^{n-1}$ with $\sG^{n-1} \in \Pmod{\cat{E}}$, as the kernel of $p^n$ is $\sE^{n-1} \to \yoneda{X^n}$ by \cref{ext_acyc_in_loc_acyc}. So we obtain a $\Qlcat{\cat{E}}$-acyclic sequence
\begin{equation*}
\cdots \to \yoneda{Y^{-1}} \to \yoneda{Y^{0}} \to \sF\,.
\end{equation*}
By \cref{ext_acyc_in_loc_acyc}, the complex $Y$ is an $\ext{\cat{E}}$-resolution of $\sF$ and hence $\sF \in \Pmod{\cat{E}}$. 

\begin{step}
If $\sE, \sG \in \Pmod[b]{\cat{E}}$, then $\sF \in \Pmod[b]{\cat{E}}$.
\end{step}
If $\sE \in \Pmod[b]{\cat{E}}$, then we may assume the $\ext{\cat{E}}$-resolution $X$ in the previous step is bounded. This means there exists a $n \leq 0$ with $\sE^n = 0$ and hence $\sF^n \cong \sG^n$. So any $\ext{\cat{E}}$-resolution of $\sG^n$ is also an $\ext{\cat{E}}$-resolution of $\sF^n$. As $\sG^n \in \Pmod[b]{\cat{E}}$, there exists a bounded $\ext{\cat{E}}$-resolution. In particular, this yields a bounded $\ext{\cat{E}}$-resolution of $\sF$ and $\sF \in \Pmod[b]{\cat{E}}$. 
\end{proof}

\begin{definition}
Let $\cat{E}$ be a small exact category. We denote by
\begin{enumerate}
\item $\Rclosure{\cat{E}}$ the category $\Pmod{\cat{E}}/\eff{\cat{E}}$ equipped with the fully exact structure in $\Qlcat{\cat{E}}$; and
\item $\Rclosure[b]{\cat{E}}$ the category $\Pmod[b]{\cat{E}}/\eff{\cat{E}}$ equipped with the fully exact structure in $\Qlcat{\cat{E}}$.
\end{enumerate}
\end{definition}

\begin{lemma} \label{Rcat_resolving}
Let $\cat{E}$ be a small exact category. Then
\begin{enumerate}
\item \label{Rcat_resolving:all} $\cat{E} \subseteq \Rclosure{\cat{E}}$ is a resolving subcategory; 
\item \label{Rcat_resolving:infty} $\cat{E} \subseteq \Rclosure[b]{\cat{E}}$ is a finitely resolving subcategory. 
\end{enumerate}
\end{lemma}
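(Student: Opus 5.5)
We verify the two axioms \cref{resolving:generator} and \cref{resolving:deflationclsd} for $\cat{E}$, identified with its image $\yoneda{\cat{E}}$. First we check that $\cat{E} \subseteq \Rclosure{\cat{E}}$ is a fully exact subcategory. By \cite[Proposition~5]{Rump:2020} the Yoneda functor gives a fully exact embedding $\cat{E} \to \Qlcat{\cat{E}}$. Its image lies in $\Pmod[b]{\cat{E}}/\eff{\cat{E}}$, since $\yoneda{M}$ has the bounded $\ext{\cat{E}}$-resolution $M$ concentrated in degree $0$. The exact structures on $\Rclosure{\cat{E}}$ and $\Rclosure[b]{\cat{E}}$ are fully exact in $\Qlcat{\cat{E}}$ by \cref{loc_in_Qlcat_extclsd}. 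Hence $\cat{E}$ is fully exact in both.

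\textbf{Axiom \cref{resolving:generator}.} This is \cref{generating_Rmod}. For $\sF \in \Pmod{\cat{E}}$ (respectively $\Pmod[b]{\cat{E}}$) there is a $\Qlcat{\cat{E}}$-conflation $\sE \to \yoneda{M} \to \sF$ with $\sE$ in the same category. Since all three terms lie in $\Rclosure{\cat{E}}$ (respectively $\Rclosure[b]{\cat{E}}$), it is a conflation there, so $\yoneda{M} \to \sF$ is a deflation.

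\textbf{Axiom \cref{resolving:deflationclsd}.} Let $\sK \to \yoneda{M} \xrightarrow{\yoneda{g}} \yoneda{N}$ be an $\Rclosure{\cat{E}}$-conflation, where fullness of Yoneda in the localization lets us write the deflation as $\yoneda{g}$. It is then a $\Qlcat{\cat{E}}$-conflation with end terms in $\cat{E}$. A $\Qlcat{\cat{E}}$-deflation between objects of $\cat{E}$ should come from an $\cat{E}$-deflation. The cleanest route: pull back along $\yoneda{g}$ a $\Qlcat{\cat{E}}$-deflation $\yoneda{L} \to \sK$ obtained from \cref{generating_Rmod}. Alternatively, use \cite[Theorem~4]{Rump:2020} ($\cat{E}$ is semi-resolving in $\Qlcat{\cat{E}}$): the epimorphism $\yoneda{g}$ has kernel in $\cat{E}$, so $\sK \cong \yoneda{K}$. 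Fully exactness of $\cat{E} \to \Qlcat{\cat{E}}$ then shows the conflation comes from $\cat{E}$, so $K \in \cat{E}$. This is the step I expect to need the most care, namely matching the semi-resolving kernel with the conflation kernel. Both are kernels of $\yoneda{g}$ in $\Qlcat{\cat{E}}$, so they agree up to isomorphism.

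\textbf{Finiteness for $\Rclosure[b]{\cat{E}}$.} Take $\sF$ with a bounded $\ext{\cat{E}}$-resolution $X$, with $X^{n}=0$ for $n<m$. By \cref{ext_acyc_in_loc_acyc} the sequence $0 \to \yoneda{X^{m}} \to \cdots \to \yoneda{X^0} \to \sF \to 0$ is acyclic in $\Qlcat{\cat{E}}$. Its intermediate images are the functors $\coker(\yoneda{d_X^{n-1}})$, which lie in $\Pmod[b]{\cat{E}}$ because truncations of $X$ give bounded $\ext{\cat{E}}$-resolutions of them. So the sequence decomposes into $\Rclosure[b]{\cat{E}}$-conflations and is $\Rclosure[b]{\cat{E}}$-acyclic. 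It therefore gives the finite resolution required in \cref{eq:resolving_resn}.
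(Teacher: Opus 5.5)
\textbf{Comparison with the paper.} For \cref{resolving:deflationclsd} you follow the paper: both arguments cite the semi-resolving property \cite[Theorem~4]{Rump:2020}. For finiteness both use \cref{ext_acyc_in_loc_acyc}. You also spell out what the paper leaves implicit, namely that the intermediate images $\coker(\yoneda{d_X^{n-1}})$ have truncated bounded resolutions.

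The real difference is \cref{resolving:generator}. The paper deduces it from the semi-resolving property together with full exactness of $\Rclosure{\cat{E}}\subseteq\Qlcat{\cat{E}}$. You use \cref{generating_Rmod} instead, and yours is the more complete argument. An epimorphism $\yoneda{L}\to\sF$ in $\Qlcat{\cat{E}}$ is an $\Rclosure{\cat{E}}$-deflation (resp.\ an $\Rclosure[b]{\cat{E}}$-deflation) only once its kernel is known to lie in that category. That is exactly what \cref{generating_Rmod} supplies.

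\textbf{Fixing the \cref{resolving:deflationclsd} paragraph.} Delete the ``cleanest route''. There is no cospan along which to pull back a deflation onto $\sK$. Moreover, ``a $\Qlcat{\cat{E}}$-deflation between objects of $\cat{E}$ comes from an $\cat{E}$-deflation'' is precisely what has to be shown, not something you can assume.

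\textbf{A hypothesis you (and the paper) need.} This step, in your version and the paper's alike, requires $\cat{E}$ to be weakly idempotent complete, which is the hypothesis of the theorem in the introduction. Without it the lemma fails.

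\emph{Counterexample.} Take a commutative ring $R$ with a non-free module $T$ such that $T\oplus R\cong R^3$ (for instance the coordinate ring of the real $2$-sphere). Let $\cat{E}$ be a skeleton of the finitely generated free $R$-modules with the split exact structure.
\begin{enumerate}
\item Here $\Eff{\cat{E}}=0$, so $\Qlcat{\cat{E}}=\mod{\cat{E}}$.
\item Let $g\colon R^3\to R$ be a split epimorphism with kernel $T$ and section $s$. Then $\Hom_R(-,T)|_{\cat{E}}\to\yoneda{R^3}\xrightarrow{\yoneda{g}}\yoneda{R}$ is short exact in $\Mod{\cat{E}}$.
\item Its first term is isomorphic to $\coker(\yoneda{s})$. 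By \cref{mono_ext_acyc_hom_acyc}, this has the bounded $\ext{\cat{E}}$-resolution $R\xrightarrow{s}R^3$.
\end{enumerate}
So the sequence is an $\Rclosure[b]{\cat{E}}$-conflation whose first term is not representable (evaluate at $R$). Hence both the semi-resolving property and \cref{resolving:deflationclsd} fail for this $\cat{E}$.

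\emph{Direct proof under weak idempotent completeness.} With this hypothesis your heuristic can be proved without Rump.
\begin{enumerate}
\item If $\yoneda{g}$ is a $\Qlcat{\cat{E}}$-deflation, it is an epimorphism in $\Mod{\cat{E}}/\Eff{\cat{E}}$, so $\coker(\yoneda{g})$ is locally effaceable.
\item Applied to the class of $\id_N$, this yields an $\cat{E}$-deflation of the form $gh$.
\item By the characterization of weak idempotent completeness in \cite[Section~7]{Buehler:2010}, $g$ itself is an $\cat{E}$-deflation.
\item The Yoneda functor sends the conflation $\ker(g)\to M\to N$ to a $\Qlcat{\cat{E}}$-conflation, so $\sK\cong\yoneda{\ker(g)}$.
\end{enumerate}
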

\begin{proof}
As $\cat{E} \subseteq \Qlcat{\cat{E}}$ is semi-resolving by \cite[Theorem~4]{Rump:2020} and $\Rclosure{\cat{E}} \subseteq \Qlcat{\cat{E}}$ is a fully exact subcategory, $\cat{E}$ is resolving in $\Rclosure{\cat{E}}$ and $\Rclosure[b]{\cat{E}}$. By \cref{ext_acyc_in_loc_acyc}, $\cat{E}$ is finitely resolving in $\Rclosure[b]{\cat{E}}$. 
\end{proof}

\subsection{For resolving subcategories}

Let $\cat{E} \subseteq \cat{F}$ be a fully exact subcategory. Restriction yields a functor
\begin{equation*}
r \colon \Mod{\cat{F}} \to \Mod{\cat{E}}\,.
\end{equation*}
As pullbacks and pushouts in $\Mod{\cat{E}}$ and $\Mod{\cat{F}}$ are the pointwise pullbacks and pushouts, respectively, the functor $r$ preserves pullbacks and pushouts.

\begin{lemma} \label{rest_preserves_Eff}
Let $\cat{F}$ be a small exact category and $\cat{E} \subseteq \cat{F}$ be a resolving subcategory. 
For $\sF \in \Mod{\cat{F}}$ one has $\sF \in \Eff{\cat{F}}$ if and only if $r(\sF) \in \Eff{\cat{E}}$. 
\end{lemma}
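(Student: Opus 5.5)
We verify the two implications directly from the definition of locally effaceable, using the two resolving axioms \cref{resolving:generator} and \cref{resolving:deflationclsd}. Recall that $r(\sF)$ is simply $\sF$ evaluated on objects and morphisms of $\cat{E}$, and that, since $\cat{E}\subseteq\cat{F}$ is fully exact, the $\cat{E}$-deflations are exactly the $\cat{F}$-deflations between objects of $\cat{E}$.

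For the forward direction, assume $\sF \in \Eff{\cat{F}}$, and let $A \in \cat{E}$ and $x \in r(\sF)(A)=\sF(A)$. By local effaceability in $\cat{F}$ there is an $\cat{F}$-deflation $p \colon B \to A$ with $\sF(p)(x)=0$, but $B$ need not lie in $\cat{E}$. By \cref{resolving:generator} choose an $\cat{F}$-deflation $q \colon C \to B$ with $C\in\cat{E}$. The composite $pq \colon C \to A$ is an $\cat{F}$-deflation between objects of $\cat{E}$. By \cref{resolving:deflationclsd} its kernel lies in $\cat{E}$, so the conflation lies in $\cat{E}$ and $pq$ is an $\cat{E}$-deflation. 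This is the same argument as in the proof of \cref{resolving_ext_acyc}. Finally $\sF(pq)(x)=\sF(q)\sF(p)(x)=0$, hence $r(\sF)\in\Eff{\cat{E}}$.

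For the converse, assume $r(\sF)\in\Eff{\cat{E}}$, and let $A\in\cat{F}$ and $x\in\sF(A)$. By \cref{resolving:generator} pick an $\cat{F}$-deflation $q\colon \hat A\to A$ with $\hat A\in\cat{E}$, and set $y=\sF(q)(x)\in\sF(\hat A)=r(\sF)(\hat A)$. By hypothesis there is an $\cat{E}$-deflation $p\colon B\to \hat A$ with $\sF(p)(y)=0$. Since $\cat{E}$ is fully exact in $\cat{F}$, $p$ is an $\cat{F}$-deflation, so $qp\colon B\to A$ is an $\cat{F}$-deflation with $\sF(qp)(x)=\sF(p)\sF(q)(x)=0$. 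Thus $\sF\in\Eff{\cat{F}}$.

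Neither step is a real obstacle; the only point requiring care is the forward direction, where one must ensure that the composite deflation is an $\cat{E}$-deflation. That requires deflation-closedness of $\cat{E}$, not merely the generator axiom, which is why both resolving axioms are used.
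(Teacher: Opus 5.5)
Your proof is correct and follows the paper's argument essentially verbatim: in both directions you precompose with a deflation from an object of $\cat{E}$ supplied by (R1). You also make explicit, via (R2), why the composite $pq$ is an $\cat{E}$-deflation, which the paper leaves implicit. One small imprecision: your opening remark that the $\cat{E}$-deflations are exactly the $\cat{F}$-deflations between objects of $\cat{E}$ ``since $\cat{E}$ is fully exact'' actually needs deflation-closedness too, but you invoke that correctly where it matters.
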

\begin{proof}
For the forward direction assume $\sF\in \Eff{\cat{F}}$. Let $N\in \cat{E}$ and $x \in r(\sF)(N)$. As $x \in r(\sF)(N) = \sF(N)$, there exists by assumption an $\cat{F}$-deflation $p\colon M \to N$ with $M\in \cat{F}$ and $\sF(p)(x) = 0$. As $\cat{E} \subseteq \cat{F}$ is resolving, there exists an $\cat{F}$-deflation $q \colon L \to M$ with $L\in \cat{E}$. Then the composition $pq$ is an $\cat{E}$-deflation and $r(\sF)(pq)(x) = \sF(pq)(x) = 0$. Hence $r(\sF) \in \Eff{\cat{E}}$. 

For the backward direction assume $r(\sF) \in \Eff{\cat{E}}$. Let $N \in \cat{F}$ and $x \in \sF(N)$. Since $\cat{E}$ is a resolving subcategory there exists a $\cat{F}$-deflation $p \colon M \to N$ with $M \in \cat{E}$. We consider $\sF(p)(x) \in \sF(M) = r(\sF)(M)$. By assumption, there exists an $\cat{E}$-deflation $q \colon L \to M$ such that $0 = r(\sF)(q)(\sF(p)(x)) = \sF(pq)(x)$. 
\end{proof}

As a consequence of this \namecref{rest_preserves_Eff}, there is an induced functor
\begin{equation*}
\bar{r} \colon \Mod{\cat{F}}/\Eff{\cat{F}} \to \Mod{\cat{E}}/\Eff{\cat{E}}\,,
\end{equation*}
that preserves pullbacks and pushouts. 

\begin{proposition} \label{resolving_same_Qlcat}
Let $\cat{F}$ be a small exact category and $\cat{E} \subseteq \cat{F}$ be a resolving subcategory. Then $\bar{r}$ restricts to an equivalence $\Qlcat{\cat{F}} \to \Qlcat{\cat{E}}$. 
\end{proposition}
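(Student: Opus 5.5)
The plan is to check three things in turn: that $\bar{r}$ sends $\Qlcat{\cat{F}}$ into $\Qlcat{\cat{E}}$, that it is fully faithful there, and that it is essentially surjective. Exactness will then come for free, because both categories carry their maximal (fully) exact structures. Throughout I use that $\Mod{\cat{F}}/\Eff{\cat{F}}$ is identified with the left exact functors on $\cat{F}$ (Keller), and likewise for $\cat{E}$. In these terms, $\bar{r}$ is the restriction of a left exact functor on $\cat{F}$ to $\cat{E}$; by \cref{rest_preserves_Eff} this is well defined and reflects locally effaceable objects.

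\textbf{Objects are preserved.} Let $\sF = \coker(\yoneda{f})$ with $f \colon M \to N$ in $\cat{F}$. By \cref{resolving:generator} I choose $\cat{F}$-deflations $M' \to M$ and $N' \to N$ with $M', N' \in \cat{E}$. Composing and pulling back gives a morphism $f' \colon M'' \to N'$ in $\cat{E}$ whose image in $\Mod{\cat{F}}/\Eff{\cat{F}}$ presents $\sF$ up to effaceable error. This uses that the images of $\cat{F}$-deflations are epimorphisms in the localization, and that $\yoneda{N'}\to\yoneda{N}$ has locally effaceable cokernel. Restricting, $r\yoneda{X}_{\cat{F}} = \yoneda{X}_{\cat{E}}$ for $X \in \cat{E}$. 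Since $r$ is exact, $\bar{r}(\sF) \cong \coker(\yoneda{f'})$ in $\Mod{\cat{E}}/\Eff{\cat{E}}$, so $\bar{r}(\sF)$ lies in $\Qlcat{\cat{E}}$. Essential surjectivity is then immediate: any object of $\Qlcat{\cat{E}}$ has the form $\coker(\yoneda{g})$ with $g$ in $\cat{E} \subseteq \cat{F}$, and it is the image of $\coker(\yoneda{g}_{\cat{F}})$.

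\textbf{Fully faithful.} I expect this to be the main obstacle, and I would argue it via left exact functors. A left exact functor $G$ on $\cat{F}$ is determined by its restriction to $\cat{E}$. Indeed, for $N \in \cat{F}$ take an $\cat{F}$-deflation $p \colon M \to N$ with $M \in \cat{E}$, and its kernel $K \in \cat{F}$. Take a further deflation $M_1 \to K$ with $M_1 \in \cat{E}$. The sequence $M_1 \to M \to N$ is then $\cat{F}$-acyclic, so left exactness gives $G(N) = \ker(G(M) \to G(M_1))$. Natural transformations are determined in the same way, and any transformation between the restrictions extends uniquely by this kernel formula. Naturality of the extension holds because morphisms in $\cat{F}$ lift to the $\cat{E}$-resolutions, since $\cat{F}$-deflations are deflations. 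Hence restriction is fully faithful on left exact functors. Equivalently, the unit and counit of the adjunction between restriction and right Kan extension become isomorphisms after localization, which I would verify using \cref{resolving_ext_acyc} and \cref{ext_acyc_in_loc_acyc}.

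\textbf{Exactness.} Since $\bar{r}$ preserves pullbacks and pushouts, it preserves kernel-cokernel pairs. The same holds for its quasi-inverse. As both exact structures consist of all kernel-cokernel pairs (\cref{difference_exact_on_Qlcat}), the equivalence is an equivalence of exact categories.
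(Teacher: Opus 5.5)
Your proof is correct in outline, but for full faithfulness it takes a genuinely different route from the paper. The paper stays inside $\mod{\cat{E}}$ and $\mod{\cat{F}}$. It lifts morphisms of presentations, and the homotopies between them, from $\cat{E}$ to $\cat{F}$ via Yoneda. It then handles roofs using the calculus of fractions together with the equivalence $\alpha \in S_{\eff{\cat{F}}} \iff r(\alpha) \in S_{\eff{\cat{E}}}$, which comes from \cref{rest_preserves_Eff}.

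You instead use Keller's identification of $\Mod{\cat{F}}/\Eff{\cat{F}}$ with left exact functors, under which $\bar{r}$ is literal restriction. This needs the one-line observation that a left exact functor on $\cat{F}$ restricts to a left exact functor on the fully exact subcategory $\cat{E}$, and so is already local there. You then show restriction is fully faithful on all left exact functors via $G(N)=\ker(G(M)\to G(M_1))$. This is cleaner and proves more: it gives full faithfulness of $\bar{r}$ on the whole ambient abelian category, and it avoids the homotopy-lifting bookkeeping. In exchange, the paper's route does not depend on the Lex description and keeps explicit control of presentations, which it reuses in \cref{resolving_same_rclosure}.

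Two details need tightening.

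\emph{The object step.} The pullback $M''$ of $M'\to N$ along $N'\to N$ need not lie in $\cat{E}$: its kernel over $M'$ is $\ker(N'\to N)$, an arbitrary object of $\cat{F}$. You must cover it once more by an object of $\cat{E}$, as the paper does. The choice of $M'\to M$ is then superfluous.

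\emph{Extending a natural transformation.} A morphism $u\colon N\to N'$ in $\cat{F}$ does not lift to the chosen covers $p\colon M\to N$ and $p'\colon M'\to N'$ on the nose. Pull back $p'$ along $up$, cover the result by some $\tilde{M}\in\cat{E}$, and cancel using that $G'$ sends the deflation $\tilde{M}\to N$ to a monomorphism. The same device shows independence of the chosen cover.

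Finally, exactness can be had without invoking maximality of the exact structures. Your argument shows $\bar{r}$ is exact and faithful between the ambient abelian categories, so it preserves and reflects short exact sequences. Both categories carry the fully exact structure from those ambient categories, so $\bar{r}$ and its quasi-inverse preserve conflations.
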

\begin{proof}
To avoid confusion, we write
\begin{equation*}
\yoneda[\cat{E}]{-} \colon \cat{E} \to \Mod{\cat{E}} \quad \text{and} \quad \yoneda[\cat{F}]{-} \colon \cat{F} \to \Mod{\cat{F}}
\end{equation*}
for the Yoneda functors on $\cat{E}$ and $\cat{F}$, respectively. We observe that $r(\yoneda[\cat{F}]{M}) = \yoneda[\cat{E}]{M}$ when $M \in \cat{E}$.

\begin{step}\label{step:modE_maps_lift_to_modF}
Let $\alpha\colon \sF\to \sG$ be a morphism in $\mod{\cat{E}}$. Then there exists a morphism $\widehat{\alpha}\colon \widehat{\sF}\to \widehat{\sG}$ in $\mod{\cat{F}}$ such that $r(\widehat{\alpha})=\alpha$. Additionally, composition is preserved under this lift from $\mod{\cat{E}}$ to $\mod{\cat{F}}$.
\end{step}

Let $\sF = \coker(\yoneda[\cat{E}]{f})$ and $\sG = \coker(\yoneda[\cat{E}]{g})$. We find the commutative diagram

\[
\begin{tikzcd}
\yoneda[\cat{E}]{L} \arrow[d] \arrow[r, "{\yoneda[\cat{E}]{f}}"] & \yoneda[\cat{E}]{L'} \arrow[r, "c"] \arrow[d] & \coker(\yoneda[\cat{E}]{f}) \arrow[d, "\alpha"] \\
\yoneda[\cat{E}]{M}\arrow[r, "{\yoneda[\cat{E}]{g}}"] & \yoneda[\cat{E}]{M'}\arrow[r]  & \coker(\yoneda[\cat{E}]{g})
\end{tikzcd}\]
in $\mod{\cat{E}}$. Using Yoneda's lemma, we may lift the left square to a square in $\cat{E}$. We then embed it into $\cat{F}$ under the inclusion functor and apply $\yoneda[\cat{F}]{-}$. Composition is preserved, and we find an induced map $\widehat{\alpha}\colon \coker(\yoneda[\cat{F}]{f})\to \coker(\yoneda[\cat{F}]{g})$ which satisfies the claim. 

\begin{step} \label{step:lift_coker_to_E_mor}
For any morphism $f$ in $\cat{F}$, there exists a morphism $g$ in $\cat{E}$ such that $\coker(\yoneda[\cat{F}]{f}) \cong \coker(\yoneda[\cat{F}]{g})$ in $\Qlcat{\cat{F}}$.
\end{step}

Let $f\colon L \to M$ be a morphism in $\cat{F}$. Since $\cat{E}$ is resolving in $\cat{F}$, we find an $\cat{F}$-deflation $p\colon \widehat{M} \to M$ with $\widehat{M}\in \cat{E}$. Form the pullback of $f$ along $p$ to obtain a morphism $\widehat{f}\colon P\to \widehat{M}$ and an $\cat{F}$-deflation $P\to L$. Take another $\cat{F}$-deflation $q \colon \widehat{L} \to P$ with $\widehat{L} \in \cat{E}$. We have a morphism $g \colonequals \widehat{f} q \colon \widehat{L} \to \widehat{M}$ in $\cat{E}$. As the Yoneda functor preserves pullbacks, we observe that $\coker(\yoneda[\cat{F}]{f}) = \coker(\yoneda[\cat{F}]{\widehat{f}})$. Since $\yoneda[\cat{F}]{q}$ is a $\Qlcat{\cat{E}}$-deflation, it follows that $\coker(\yoneda[\cat{F}]{\widehat{f}}) \cong \coker(\yoneda[\cat{F}]{g})$. 

\begin{step}\label{step:equal_E-cokernels_equal_F-cokernels}
Let $f$ and $g$ be morphisms in $\cat{E}$. If $\coker(\yoneda[\cat{E}]{f}) = \coker(\yoneda[\cat{E}]{g})$ in $\mod{\cat{E}}$ then $\coker(\yoneda[\cat{F}]{f}) = \coker(\yoneda[\cat{F}]{g})$ in $\mod{\cat{F}}$.
\end{step}

By \cref{step:modE_maps_lift_to_modF}, we can lift any morphism $\coker(\yoneda[\cat{E}]{f}) \to \coker(\yoneda[\cat{E}]{g})$ to a morphism of their presentations. The morphism on the presentation is unique up to homotopy. The claim follows as we can lift the homotopies to $\cat{E}$ using Yoneda's lemma.

\begin{step} \label{same_Qlcat:essentially_surj}
The functor $\bar{r}$ restricts to a functor $\Qlcat{\cat{F}} \to \Qlcat{\cat{E}}$; that is, if $\sF \in \Qlcat{\cat{F}}$, then $\bar{r}(\sF) \in \Qlcat{\cat{E}}$.
\end{step}

Using first \cref{step:lift_coker_to_E_mor}, let $\sF = \coker(\yoneda[\cat{F}]{g})$ with $g$ a morphism in $\cat{E}$. As $r(\yoneda[\cat{F}]{g}) = \yoneda[\cat{E}]{g}$ and $\bar{r}$ preserves cokernels, we have 
\begin{equation} \label{eq:rest_qlcat}
    \bar{r}(\coker(\yoneda[\cat{F}]{g})) = \coker(\bar{r}(\yoneda[\cat{F}]{g})) = \coker(\yoneda[\cat{E}]{g})\in \Qlcat{\cat{E}}.
\end{equation}

\begin{step}
The restriction of $\bar{r}$ to $\Qlcat{\cat{F}} \to \Qlcat{\cat{E}}$ is essentially surjective; that is, if $\sE \in \Qlcat{\cat{E}}$, then there exists $\sF \in \Qlcat{\cat{F}}$ with $\sE \cong \bar{r}(\sF)$.
\end{step}

Let $\sE\in \Qlcat{\cat{E}}$ be of the form $\coker(\yoneda[\cat{E}]{g})$ for some morphism $g\in \cat{E}$. Then by \cref{eq:rest_qlcat}, $\sF = \coker(\yoneda[\cat{F}]{g})$ satisfies the claim.

\begin{step}\label{step:full}
The restriction of $\bar{r}$ to $\Qlcat{\cat{F}} \to \Qlcat{\cat{E}}$ is full.
\end{step}

By \cref{step:modE_maps_lift_to_modF} any morphism in $\mod{\cat{E}}$ lifts to a morphism in $\mod{\cat{F}}$. For a morphism $\alpha$ in $\mod{\cat{F}}$ we have the following equivalences
\begin{equation*}
\begin{aligned}
&\quad\quad \alpha \in S_{\eff{\cat{F}}} \text{ (that is invertible in $\Qlcat{\cat{F}}$)} \\
&\iff \ker(\alpha) , \coker(\alpha) \in \Eff{\cat{F}} \\
&\iff \ker(r(\alpha)), \coker(r(\alpha)) \in \Eff{\cat{E}} \\
&\iff r(\alpha) \in S_{\eff{\cat{E}}} \text{ (that is invertible in $\Qlcat{\cat{E}}$)}\,.
\end{aligned}
\end{equation*}
For the first and third equivalence we use the localization of the ambient abelian category and the second equivalence holds by \cref{rest_preserves_Eff}. 

\begin{step}
The restriction of $\bar{r}$ to $\Qlcat{\cat{F}} \to \Qlcat{\cat{E}}$ is faithful.
\end{step}

To see that $\bar{r}$ is faithful, let $\eta\colon \sE\to \sF$ be a morphism in $\Qlcat{\cat{F}}$ such that $\bar{r}(\eta) = 0$ in $\Qlcat{\cat{E}}$. Suppose that $\eta$ is represented by the zigzag $\sE\xleftarrow{\alpha}\sG\xrightarrow{\beta}\sF$ with $\alpha \in S_{\eff{\cat{E}}}$. 
There is a morpshim $\sH\xrightarrow{\gamma}\sG$ in $S_{\eff{\cat{E}}}$, and there is a homotopy witnessing $\beta\gamma=0$ in $\mod{\cat{E}}$. By a similar argument as in \cref{step:modE_maps_lift_to_modF}, the homotopy lifts to $\mod{\cat{F}}$. By \cref{rest_preserves_Eff,eff_is_Eff_cap_mod} the lift of $\gamma$ to $\mod{\cat{F}}$ lies in $S_{\eff{\cat{F}}}$.
\end{proof}

\begin{lemma} \label{resolving_ext_resn}
Let $\cat{F}$ be a small exact category and $\cat{E} \subseteq \cat{F}$ a resolving subcategory. For an $\ext{\cat{F}}$-resolution $X$ there exists an $\ext{\cat{E}}$-resolution $W$ and an $\cat{F}$-quasi-isomorphism $f \colon W \to X$.
\end{lemma}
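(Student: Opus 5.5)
We build $W$ and $f$ degree by degree from the top, in the style of \cref{induced_mor_extresn,ind_mor_nullhptc}. At each step we replace the relevant object of $X$, or a pullback over it, by an object of $\cat{E}$ mapping onto it via an $\cat{F}$-deflation, as provided by \cref{resolving:generator}.

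For $n \geq 1$ set $W^n = 0$. Choose an $\cat{F}$-deflation $p^0 \colon W^0 \to X^0$ with $W^0 \in \cat{E}$ and set $f^0 = p^0$, $P^0 = X^0$. Inductively we carry the same data as in \cref{ext_acyc_extend_to_chainmap:ind}:
\begin{itemize}
\item objects $P^n$ with $\cat{F}$-deflations $W^n \to P^n$;
\item maps $a^n \colon X^n \to P^{n+1}$ and $b^n \colon P^n \to W^{n+1}$;
\item the requirement that $P^n$ is the pullback of $X^n \to P^{n+1} \leftarrow W^{n+1}$, and that $a^n d_X^{n-1} = 0$.
\end{itemize}
Concretely, given the data in degree $n$, the pullback property of $P^n$ yields $a^{n-1} \colon X^{n-1} \to P^n$ with $q^n a^{n-1} = d_X^{n-1}$ and $b^n a^{n-1} = 0$. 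Let $P^{n-1}$ be the pullback of $a^{n-1}$ along the $\cat{F}$-deflation $W^n \to P^n$. Then choose an $\cat{F}$-deflation $W^{n-1} \to P^{n-1}$ with $W^{n-1} \in \cat{E}$, and put $d_W^{n-1} = b^{n-1} p^{n-1}$ and $f^{n-1} = q^{n-1} p^{n-1}$.

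No acyclicity input is needed here, in contrast to \cref{induced_mor_extresn}, because we are mapping into $X$ rather than out of it. Exactly as in that proof, the sequences
\begin{equation*}
P^{n} \to X^{n} \oplus W^{n+1} \to P^{n+1}
\end{equation*}
are $\cat{F}$-conflations, being pullbacks along deflations. Hence $\cone(f)$ is $\cat{F}$-acyclic and $f$ is an $\cat{F}$-quasi-isomorphism. By construction $W$ is a complex over $\cat{E}$ concentrated in non-positive degrees, and $f$ is a chain map.

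It remains to check that $W$ is left $\ext{\cat{E}}$-acyclic in negative degrees. First, $X$ is left $\ext{\cat{F}}$-acyclic in negative degrees, and $\cone(f)$ is $\cat{F}$-acyclic, hence left $\ext{\cat{F}}$-acyclic in every degree. By \cref{Extacyc_cone_then}(2), applied in the exact category $\cat{F}$, $W$ is therefore left $\ext{\cat{F}}$-acyclic in each degree $n<0$. Since all terms of $W$ lie in $\cat{E}$, \cref{resolving_ext_acyc} upgrades this to left $\ext{\cat{E}}$-acyclicity. So $W$ is an $\ext{\cat{E}}$-resolution.

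The main obstacle is the bookkeeping in the inductive step: verifying that $d_W^n d_W^{n-1} = 0$ and that the displayed sequences are genuinely $\cat{F}$-conflations. This needs $\cat{F}$-deflations to be stable under pullback, and uses the pullback universal property to get $a^{n-1} d_X^{n-2} = 0$. I would copy the verification from the proof of \cref{induced_mor_extresn} essentially verbatim, replacing the acyclicity-produced deflation there by the resolving-produced deflation here.
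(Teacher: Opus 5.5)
Your proposal is correct and follows essentially the same route as the paper. The paper uses the same top-down pullback construction as in \cref{induced_mor_extresn}, with the covering $\cat{F}$-deflations $W^{n} \to P^{n}$ supplied by \cref{resolving:generator} instead of by acyclicity, and then upgrades $W$ from left $\ext{\cat{F}}$-acyclic to left $\ext{\cat{E}}$-acyclic via \cref{Extacyc_cone_then} and \cref{resolving_ext_acyc}, as you do (your $d_W^{n-1} = b^{n-1}p^{n-1}$ and your conclusion about $W$ rather than $X$ also fix two small typos in the paper's write-up).
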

\begin{proof}
Using induction on the cohomological degree, we construct an $\ext{\cat{E}}$-resolu\-tion $W$ and an $\cat{F}$-quasi-isomorphism $f \colon W \to X$. 

Set $W^n = 0$ and $f^n = 0$ for $n > 0$. We  claim, that for any $n$ there exists a commutative diagram
\begin{equation*}
\begin{tikzcd}[column sep=large]
& P^{n} \ar[r,defl=\cat{F},"q^{n}" {near start}] \ar[dl,"b^{n}"] & X^{n} \ar[d,"d_X^{n}"] \ar[dl,"a^{n}"] \\
W^{n+1} \ar[r,defl=\cat{F},"p^{n+1}"] & P^{n+1} \ar[r,defl=\cat{F},"q^{n+1}"] & X^{n+1}
\end{tikzcd}
\end{equation*}
where $P^{n}$ is the pullback of the span $W^{n+1} \to P^{n+1} \leftarrow X^{n}$ and $d_X^{n-1} a^{n} = 0$ and $W^{n+1} \in \cat{E}$.

For $n \geq 0$, we set $P^n \colonequals X^n$, $a^n = b^n = d_X^n$ and $q^n = \id_{X^n}$ and $p^{n+1} = \id_{X^{n+1}}$. Clearly, the claim holds. 

We assume the desired morphisms and objects exist for some $n \leq 0$. As $\cat{E} \subseteq \cat{F}$ is resolving, there exists an $\cat{F}$-deflation $p^n \colon W^n \to P^n$ with $W^n \in \cat{E}$ by \cref{resolving:generator}. Since $P^{n}$ is a pullback and $a^n d_X^{n-1}$, there exists a morphism $a^{n-1}$ such that
\begin{equation*}
q^{n} a^{n-1} = d_X^{n-1} \quad \text{and} \quad b^{n} a^{n-1} = 0\,.
\end{equation*}
Further, as 
\begin{equation*}
q^{n} a^{n-1} d_X^{n-2} = d_X^{n-1} d_X^{n-2} = 0 \quad \text{and} \quad b^{n} a^{n-1} d_X^{n-2} = 0
\end{equation*}
we obtain $a^{n-1} d_X^{n-2} = 0$. Let $P^{n-1}$ be the pullback of $a^{n-1}$ and $p^{n}$ with canonical morphisms $b^{n-1}$ and $q^{n-1}$. 

From the above construction we obtain a complex $W$ with $d_W^{n} \colonequals p^{n} b^{n}$ and a morphism $f \colon W \to X$ given by $f^{n} = q^{n} p^{n}$. It is an $\cat{F}$-quasi-isomorphism as the pullback squares yield $\cat{F}$-conflations. By \cref{Extacyc_cone_then} the complex $X$ is left $\ext{\cat{F}}$-acyclic and by \cref{resolving_ext_acyc} it is left $\ext{\cat{E}}$-acyclic.
\end{proof}

\begin{corollary} \label{finitely_resolving_ext_resn}
Let $\cat{F}$ be a small exact category and $\cat{E} \subseteq \cat{F}$ a finitely resolving subcategory. For a bounded $\ext{\cat{F}}$-resolution $X$ there exists a bounded $\ext{\cat{E}}$-resolution $W$ and an $\cat{F}$-quasi-isomorphism $f \colon W \to X$.
\end{corollary}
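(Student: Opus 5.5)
We follow the inductive construction of \cref{resolving_ext_resn} for the bounded resolution $X$, and then use finite resolvability to cut the resulting complex off. Suppose $X^n = 0$ for $n < m$. Running the construction of \cref{resolving_ext_resn} produces objects $W^n \in \cat{E}$, pullbacks $P^n$, and an $\cat{F}$-quasi-isomorphism $f \colon W \to X$. The pullback squares give $\cat{F}$-conflations
\begin{equation*}
P^{n} \to X^{n} \oplus W^{n+1} \to P^{n+1}\,.
\end{equation*}
For $n < m$ we have $X^n = 0$, so $P^n$ is simply the kernel of the $\cat{F}$-deflation $p^{n+1} \colon W^{n+1} \to P^{n+1}$. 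Hence, from degree $m$ downwards, the construction is exactly the standard construction of an $\cat{E}$-resolution of the object $P^{m}$ (more precisely, of $P^{m-1}$, the kernel of $W^m \to P^m$).

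The key step is to modify the construction at the degree where $X$ stops. Let $P \colonequals P^{m-1} \in \cat{F}$. Since $\cat{E} \subseteq \cat{F}$ is finitely resolving, $P$ admits a finite $\cat{F}$-acyclic resolution
\begin{equation*}
0 \to E^{k} \to \cdots \to E^{m-1} \to P \to 0
\end{equation*}
with $E^{i} \in \cat{E}$. For $n \leq m-1$ we use these $E^n$ as $W^n$ instead of choosing arbitrary deflations $p^n$. The deflation $E^{m-1} \to P$ and the successive kernels play the role of $p^n$ and $P^{n}$, and the last kernel is $E^{k}$ itself. One subtlety: the kernels in the chosen resolution need not be in $\cat{E}$, and this is fine, since the inductive step only requires $W^n \in \cat{E}$ and a deflation $W^n \to P^n$. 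At the bottom degree we take $p^k = \id$, so $P^{k-1} = 0$ and $W^{n} = 0$ for $n < k$.

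With this choice, $W$ is bounded, all $W^n$ lie in $\cat{E}$, and the sequences above are still $\cat{F}$-conflations in every degree. So $\cone(f)$ is $\cat{F}$-acyclic and $f$ is an $\cat{F}$-quasi-isomorphism. Finally, $X$ is left $\ext{\cat{F}}$-acyclic in negative degrees and $\cone(f)$ is $\cat{F}$-acyclic, hence left $\ext{\cat{F}}$-acyclic. By \cref{Extacyc_cone_then}, $W$ is left $\ext{\cat{F}}$-acyclic in negative degrees, and by \cref{resolving_ext_acyc} it is left $\ext{\cat{E}}$-acyclic there. This makes $W$ a bounded $\ext{\cat{E}}$-resolution.

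The main obstacle is checking that the substituted finite resolution is compatible with the pullback bookkeeping at the junction degree $m-1$, where $X$ becomes zero. This should be straightforward: with $X^n = 0$, the pullback diagram degenerates to the kernel sequence, so the induction hypothesis $a^{n}d_X^{n-1}=0$ is vacuous there.
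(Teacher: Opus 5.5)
Your proof is correct and follows the same route as the paper: run the construction of Lemma~\ref{resolving_ext_resn} until $X$ vanishes, then splice in a finite $\cat{F}$-acyclic resolution by objects of $\cat{E}$, which exists because $\cat{E}$ is finitely resolving. The differences are inessential: you resolve $P^{m-1}=\ker(W^m\to P^m)$ where the paper resolves $P^m$ directly, and you spell out the final check that $W$ is left $\ext{\cat{E}}$-acyclic in negative degrees (via Corollary~\ref{Extacyc_cone_then} and Lemma~\ref{resolving_ext_acyc}), which the paper leaves implicit.
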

\begin{proof}
Let $n \leq 0$ such that $X^{<n} = 0$. We construct $W^{> n}$ and $P^{\geqslant n}$ as in the proof of \cref{resolving_ext_resn}. Let $Y$ be a bounded $\ext{\cat{E}}$-resolution that resolves $P^{n}$. Set $W^{n+i} = Y^{i}$ and $d_W^{n+i} = Y^{i}$ for $i \leq 0$. Then $W$ is a bounded $\ext{\cat{E}}$-resolution and $W \to X$ an $\cat{F}$-quasi-isomorphism. 
\end{proof}

\begin{proposition} \label{resolving_same_rclosure}
Let $\cat{F}$ be a small exact category.
\begin{enumerate}
\item If $\cat{E} \subseteq \cat{F}$ is resolving, then $\bar{r}$ restricts to an equivalence $\Rclosure{\cat{F}} \to \Rclosure{\cat{E}}$. 
\item If $\cat{E} \subseteq \cat{F}$ is finitely resolving, then $\bar{r}$ restricts to an equivalence $\Rclosure[b]{\cat{F}} \to \Rclosure[b]{\cat{E}}$. 
\end{enumerate}
\end{proposition}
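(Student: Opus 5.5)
The plan is to use \cref{resolving_same_Qlcat}: $\bar{r}$ restricts to an equivalence $\Qlcat{\cat{F}} \to \Qlcat{\cat{E}}$. Since $\Rclosure{\cat{F}}$ and $\Rclosure{\cat{E}}$ are full subcategories of $\Qlcat{\cat{F}}$ and $\Qlcat{\cat{E}}$, each carrying the fully exact structure, the task reduces to three points. First, $\bar{r}$ should map $\Rclosure{\cat{F}}$ into $\Rclosure{\cat{E}}$. Second, every object of $\Rclosure{\cat{E}}$ should be isomorphic to $\bar{r}$ of an object of $\Rclosure{\cat{F}}$. Third, the restricted functor should be exact and reflect conflations. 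Full faithfulness is then inherited from the equivalence on $\Qlcat{-}$.

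For the exactness statement I would argue as follows. Conflations in the $\Qlcat{-}$ categories are just the kernel-cokernel pairs, as explained in \cref{difference_exact_on_Qlcat}. An equivalence of additive categories preserves and reflects kernel-cokernel pairs. Hence the restriction is an exact equivalence once essential images match.

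To show $\bar{r}$ maps $\Rclosure{\cat{F}}$ into $\Rclosure{\cat{E}}$, take $\sF \in \Pmod{\cat{F}}$ with $\ext{\cat{F}}$-resolution $X$. By \cref{resolving_ext_resn} there is an $\ext{\cat{E}}$-resolution $W$ with an $\cat{F}$-quasi-isomorphism $W \to X$. I then want $\coker(\yoneda[\cat{F}]{d_W^{-1}}) \cong \coker(\yoneda[\cat{F}]{d_X^{-1}})$ in $\Qlcat{\cat{F}}$. To get this, I would use \cref{ext_acyc_in_loc_acyc}: both $W$ and $X$ become resolutions in the abelian category $\Mod{\cat{F}}/\Eff{\cat{F}}$. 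An $\cat{F}$-quasi-isomorphism becomes a quasi-isomorphism there, because $\cat{F}$-acyclic implies left $\ext{\cat{F}}$-acyclic. Since these complexes are concentrated in nonpositive degrees, $H^0$ is preserved. Applying $\bar{r}$ and \cref{eq:rest_qlcat} then gives $\bar{r}(\sF) \cong \coker(\yoneda[\cat{E}]{d_W^{-1}})$, which lies in $\Pmod{\cat{E}}/\eff{\cat{E}}$.

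Conversely, for essential surjectivity, take an $\ext{\cat{E}}$-resolution $W$ of $\sE$. By \cref{resolving_ext_acyc} it is an $\ext{\cat{F}}$-resolution. Again by \cref{eq:rest_qlcat}, $\sF = \coker(\yoneda[\cat{F}]{d_W^{-1}}) \in \Rclosure{\cat{F}}$ satisfies $\bar{r}(\sF) \cong \sE$.

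For the bounded version I would run the same argument, replacing \cref{resolving_ext_resn} by \cref{finitely_resolving_ext_resn}. The converse direction needs no change, since a bounded $\ext{\cat{E}}$-resolution is a bounded $\ext{\cat{F}}$-resolution.

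The main obstacle is the step identifying cokernels along an $\cat{F}$-quasi-isomorphism inside the localization. This uses that $\bar{r}$ commutes with cokernels and the exactness of the localization functor. One must also check that the objects obtained are in $\Pmod{-}$ up to isomorphism in the quotient, rather than on the nose in $\Mod{-}$. That is fine because these subcategories are strictly full and we work in the localization.
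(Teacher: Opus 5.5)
Your proposal is correct and follows the paper's proof. You reduce to the equivalence $\Qlcat{\cat{F}} \to \Qlcat{\cat{E}}$ of \cref{resolving_same_Qlcat}, use \cref{resolving_ext_resn} (resp.\ \cref{finitely_resolving_ext_resn}) with \cref{eq:rest_qlcat} in one direction and \cref{resolving_ext_acyc} in the other, and you additionally make explicit two steps the paper leaves implicit: that the $\cat{F}$-quasi-isomorphism $W \to X$ identifies the two cokernels in $\Qlcat{\cat{F}}$ (via \cref{ext_acyc_in_loc_acyc} and $H^0$ in the abelian quotient), and that the restricted equivalence preserves and reflects conflations.
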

\begin{proof}
Let $\sF \in \Qlcat{\cat{F}}$. By \cref{resolving_same_Qlcat}, it is enough to show $\sF \in \Rclosure[*]{\cat{F}}$ if and only if $\bar{r}(\sF) \in \Rclosure[*]{\cat{E}}$ for $* \in \{\emptyset, b\}$. 

We assume $\sF \in \Rclosure{\cat{F}}$. Then there exists an $\ext{\cat{F}}$-resolution $X$ with $\sF = \coker(\yoneda{d_X^{-1}})$. By \cref{resolving_ext_resn}, there exists an $\ext{\cat{E}}$-resolution $W$ and an $\cat{F}$-quasi-isomorphism $f \colon W \to X$. Hence
\begin{equation*}
\bar{r}(\sF) = \bar{r}(\coker(\yoneda[\cat{F}]{d_W^{-1}})) = \coker(\yoneda[\cat{E}]{d_W^{-1}})) \in \Rclosure{\cat{E}}\,.
\end{equation*}
When $\cat{E} \subseteq \cat{F}$ is finitely resolving and $\sF \in \Rclosure[b]{\cat{F}}$, then there exists a bounded $\ext{\cat{F}}$-resolution $X$, and the $\ext{\cat{E}}$-resolution $W$ is also bounded by \cref{finitely_resolving_ext_resn}. 

For the converse we assume $\bar{r}(\sF) \in \Rclosure[*]{\cat{E}}$. Let $X$ be a (bounded) $\ext{\cat{E}}$-resolution of $\bar{r}(\sF)$. Then $X$ is an $\ext{\cat{F}}$-resolution by \cref{resolving_ext_acyc}, and $\sF \cong \coker(\yoneda[\cat{F}]{d_X^{-1}})$ by \cref{same_Qlcat:essentially_surj} of \cref{resolving_same_Qlcat}. Hence $X$ is a (bounded) $\ext{\cat{F}}$-resolution of $\sF$ and $\sF \in \Rclosure[*]{\cat{F}}$. 
\end{proof}

Together with \cref{Rcat_resolving} we immediately obtain the following \namecref{Rcat_idempotent}.

\begin{corollary} \label{Rcat_idempotent}
Let $\cat{E}$ be a small exact category. Then there are equivalences
\begin{equation*}
\Rclosure{\Rclosure{\cat{E}}} \to \Rclosure{\cat{E}} \quad \text{and} \quad \Rclosure[b]{\Rclosure[b]{\cat{E}}} \to \Rclosure[b]{\cat{E}}
\end{equation*}
 of exact categories. \qed
\end{corollary}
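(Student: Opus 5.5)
The equivalences follow by applying \cref{resolving_same_rclosure} to the inclusion $\cat{E} \subseteq \Rclosure{\cat{E}}$, which is resolving by \cref{Rcat_resolving}. Similarly, $\cat{E} \subseteq \Rclosure[b]{\cat{E}}$ is finitely resolving. This is the same pattern as \cref{heart_idempotent}, which came from \cref{heart_resolving} and \cref{rcat_same_heart}. There is one preliminary point. \cref{resolving_same_rclosure} needs a \emph{small} ambient exact category $\cat{F}$, and the subcategory must literally be contained in $\cat{F}$. So first I would note that $\Rclosure{\cat{E}}$ is (essentially) small. Its objects are finitely presented functors $\coker(\yoneda{f})$ with $f$ a morphism in the small category $\cat{E}$, so up to isomorphism they form a set. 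Hom-sets in the localization are also sets, since $S_{\eff{\cat{E}}}$ admits a right calculus of fractions over a small category. Replacing $\Rclosure{\cat{E}}$ by a skeleton-type small model (a full subcategory containing the image of the Yoneda embedding) does not change anything up to equivalence of exact categories.

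Next I would identify $\cat{E}$ with its Yoneda image in $\Rclosure{\cat{E}}$. By \cite[Proposition~5]{Rump:2020}, the Yoneda functor gives a fully exact embedding $\cat{E} \to \Qlcat{\cat{E}}$. It lands in $\Rclosure{\cat{E}}$, since $M \in \cat{E}$ has the trivial $\ext{\cat{E}}$-resolution concentrated in degree $0$. Because $\Rclosure{\cat{E}}$ is fully exact in $\Qlcat{\cat{E}}$ by \cref{loc_in_Qlcat_extclsd}, it is also a fully exact subcategory of $\Rclosure{\cat{E}}$. \cref{Rcat_resolving} then says exactly that this copy is resolving. Setting $\cat{F} = \Rclosure{\cat{E}}$ in \cref{resolving_same_rclosure}(1) gives an equivalence $\bar r \colon \Rclosure{\Rclosure{\cat{E}}} \to \Rclosure{\cat{E}'}$, where $\cat{E}'$ is the Yoneda image of $\cat{E}$. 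This $\cat{E}'$ is equivalent to $\cat{E}$ as an exact category, so $\Rclosure{\cat{E}'} \simeq \Rclosure{\cat{E}}$. The bounded case runs the same way, using \cref{Rcat_resolving}(2) and \cref{resolving_same_rclosure}(2).

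The only step needing care is that $\Rclosure{-}$ is invariant under equivalence of exact categories, so that we may pass from $\cat{E}'$ back to $\cat{E}$. I would argue that an exact equivalence $\cat{E} \simeq \cat{E}'$ induces an equivalence $\Mod{\cat{E}} \simeq \Mod{\cat{E}'}$ by restriction along a quasi-inverse. This equivalence preserves representables, $\ext{}$-resolutions (left $\ext{}$-acyclicity is defined purely in terms of deflations and composition), and $\eff{}$, so it descends to the localizations. The equivalence is also exact for the fully exact structures inherited from $\Qlcat{-}$, since $\bar r$ preserves pullbacks and pushouts and hence conflations.
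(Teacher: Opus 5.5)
Your proposal is correct and is exactly the paper's argument. The corollary is \cref{resolving_same_rclosure} applied to the inclusion of $\cat{E}$ into $\Rclosure{\cat{E}}$ (resp.\ $\Rclosure[b]{\cat{E}}$), which is (finitely) resolving by \cref{Rcat_resolving}. Your extra remarks on essential smallness, on identifying $\cat{E}$ with its Yoneda image, and on invariance under exact equivalence are technicalities the paper leaves implicit, and you handle them correctly.
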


\section{The resolving completion}

Let $\cat{E}$ be a (small) exact category. In \cref{sec:dcat,sec:rclosure} we have constructed ambient categories $\lheart[*]{\cat{E}}$ and $\Rclosure[*]{\cat{E}}$ in which $\cat{E}$ is (finitely) resolving. They both behave like a completion operation; see \cref{heart_idempotent,Rcat_idempotent}. We use these to obtain a universal property.

\begin{definition}\label{universal_property}
Let $\cat{E} \to \cat{R}$ be a fully exact embedding such that $\cat{E}$ is (finitely) resolving in $\cat{R}$. We call $\cat{R}$ the \emph{(finitely) resolving completion} of $\cat{E}$, if for any fully exact embedding $\cat{E} \to \cat{F}$ such that $\cat{E}$ is (finitely) resolving in $\cat{F}$, there exists a fully exact embedding $\cat{F} \to \cat{R}$, unique up to natural isomorphism, extending $\cat{E} \to \cat{R}$. In particular,  there is a commutative diagram
\[
\begin{tikzcd}
\cat{E} \ar[r,tail] \arrow[rd,tail] & \cat{F} \ar[d,tail,dashed,"\exists i"] \\
& \cat{R} \nospacepunct{.}
\end{tikzcd}
\]
\end{definition}

It is straightforward to check that then the fully exact embedding $\cat{F} \to \cat{R}$ is (finitely) resolving.

\begin{proposition} \label{LH_completion}
Let $\cat{E}$ be an exact category. Then $\lheart{\cat{E}}$ is a resolving completion of $\cat{E}$ and $\lheart[b]{\cat{E}}$ is a finitely resolving completion of $\cat{E}$.
\end{proposition}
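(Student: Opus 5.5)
We already know from \cref{heart_resolving} that $\cat{E} \subseteq \lheart{\cat{E}}$ is resolving and that $\cat{E} \subseteq \lheart[b]{\cat{E}}$ is finitely resolving. What remains is the universal property. The plan is to combine \cref{rcat_same_heart} with the fact that $\cat{F}$ is itself resolving in $\lheart{\cat{F}}$.

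Let $\cat{E} \to \cat{F}$ be a fully exact embedding with $\cat{E}$ resolving in $\cat{F}$. We construct the extension $i \colon \cat{F} \to \lheart{\cat{E}}$ in three steps:
\begin{enumerate}
\item Take the inclusion $\cat{F} \subseteq \lheart{\cat{F}}$ of \cref{heart_resolving}.
\item Compose with a quasi-inverse of the equivalence $\lheart{\cat{E}} \to \lheart{\cat{F}}$ from \cref{rcat_same_heart}. That equivalence is the restriction of the triangle equivalence $\dcat[-]{\cat{E}} \to \dcat[-]{\cat{F}}$ of \cite[Theorem~3.11]{Henrard/vanRoosmalen:2020c}, which is induced by the inclusion $\cat{E} \subseteq \cat{F}$.
\item Check that the composite restricts on $\cat{E}$ to the canonical inclusion, up to natural isomorphism. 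This holds because the equivalence of \cref{rcat_same_heart} sends a stalk complex $M \in \cat{E}$ to the same stalk complex in $\lheart{\cat{F}}$.
\end{enumerate}
The composite $\cat{F} \to \lheart{\cat{F}} \simeq \lheart{\cat{E}}$ is a fully exact embedding, being a fully exact inclusion followed by an exact equivalence. It may only be injective on objects up to isomorphism; we can either replace $\lheart{\cat{E}}$ by an isomorphic copy or read ``embedding'' up to equivalence. The bounded case is identical, using \cref{heart_resolving}\cref{heart_resolving:infty} and \cref{rcat_same_heart}(2).

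Uniqueness is the main obstacle. Suppose $j \colon \cat{F} \to \lheart{\cat{E}}$ is another fully exact embedding extending $\cat{E} \to \lheart{\cat{E}}$. We argue as follows.
\begin{enumerate}
\item Since $j$ is exact, it maps the $\cat{F}$-acyclic resolutions \cref{eq:resolving_resn} by objects of $\cat{E}$ to $\lheart{\cat{E}}$-acyclic complexes.
\item So $j(M)$ is determined by a resolution $X$ of $M$ over $\cat{E}$. Concretely, taking brutal truncations, the truncated pieces are successive cokernels of $\lheart{\cat{E}}$-inflations.
\item Hence $j(M)$ is isomorphic to the complex $X$ itself, viewed in $\dcat{\cat{E}}$. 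The same holds for $i(M)$.
\end{enumerate}

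To make this functorial and get a natural isomorphism $i \cong j$, we pass to derived categories. The exact functor $j$ induces a triangle functor $\dcat[-]{\cat{F}} \to \dcat[-]{\lheart{\cat{E}}} \simeq \dcat[-]{\cat{E}}$, where the last equivalence is given by total complexes. This functor agrees with the inverse of the Henrard--van Roosmalen equivalence on $\cat{E}$. Since $\dcat[-]{\cat{F}}$ is generated by $\cat{E}$ via resolutions, the two functors should agree on $\cat{F}$.

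An alternative route to naturality is more elementary. For $M \in \cat{F}$, choose an $\cat{F}$-conflation $K \to E^0 \to M$ with $E^0 \in \cat{E}$. Then both $i(M)$ and $j(M)$ are cokernels of $i(K) \to E^0$ and $j(K) \to E^0$. One proceeds inductively on the resolution and compares morphisms via \cref{induced_mor_extresn,ind_mor_nullhptc}.

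I expect the main obstacle to be making this comparison natural and independent of the chosen resolutions. The lemmas on morphisms between Ext-resolutions control lifting only up to quasi-isomorphism, so some care is needed to pass from the objectwise identification to a natural isomorphism.
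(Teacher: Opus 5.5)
Your existence argument is the paper's: include $\cat{F}$ in $\lheart{\cat{F}}$ via \cref{heart_resolving} and compose with a quasi-inverse of the equivalence $\lheart{\cat{E}} \to \lheart{\cat{F}}$ from \cref{rcat_same_heart}.

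The gap is uniqueness, which you leave open. Your derived-category sketch concludes that the two functors agree on $\cat{F}$ because they agree on $\cat{E}$ and $\cat{E}$ generates $\dcat[-]{\cat{F}}$ via resolutions. That inference is not valid as stated: triangle functors that are isomorphic on a generating subcategory need not be isomorphic, since cones are not functorial and unbounded resolutions involve more than finitely many cones. Your elementary route via \cref{induced_mor_extresn,ind_mor_nullhptc} only gives identifications up to quasi-isomorphism and leaves naturality open. Moreover, for an unbounded resolution the successive-cokernel description of $j(M)$ never terminates, so it does not by itself identify $j(M)$ with $X$.

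The missing observation is that you have agreement on all of $\dcat[-]{\cat{E}}$, not just on $\cat{E}$, and that the comparison functor is an equivalence you can cancel. Write $\iota \colon \cat{E} \to \cat{F}$ and $\iota' \colon \cat{E} \to \lheart{\cat{E}}$ for the embeddings.
\begin{enumerate}
\item From $j \circ \iota \cong \iota'$ you get $\dcat[-]{j} \circ \dcat[-]{\iota} \cong \dcat[-]{\iota'}$ on the whole of $\dcat[-]{\cat{E}}$.
\item Since $\dcat[-]{\iota}$ is an equivalence by \cite[Theorem~3.11]{Henrard/vanRoosmalen:2020c}, this gives $\dcat[-]{j} \cong \dcat[-]{\iota'} \circ (\dcat[-]{\iota})^{-1}$.
\item The same holds for your $i$, since your step~3 gives $i \circ \iota \cong \iota'$. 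Hence $\dcat[-]{j} \cong \dcat[-]{i}$.
\item Evaluate on stalk complexes of objects of $\cat{F}$, and use that $\lheart{\cat{E}}$ embeds fully faithfully into $\dcat[-]{\lheart{\cat{E}}}$. This yields a natural isomorphism $j \cong i$, with no comparison of resolutions needed.
\end{enumerate}
The bounded case is the same: finitely resolving subcategories are resolving, so $\cat{E} \subseteq \cat{F}$ and $\cat{E} \subseteq \lheart[b]{\cat{E}}$ both induce equivalences on $\dcat[-]$. This cancellation is what the paper's one-line appeal to \cref{rcat_same_heart} amounts to, because the equivalence there is the restriction of the derived functor of the inclusion.
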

\begin{proof}
By \cref{heart_resolving}, $\cat{E}$ is resolving in $\lheart{\cat{E}}$ and finitely resolving in $\lheart[b]{\cat{E}}$. Let $\cat{E} \to \cat{F}$ be a fully exact embedding such that $\cat{E}$ is (finitely) resolving in $\cat{F}$. By \cref{rcat_same_heart}, there are equivalences
\begin{equation*}
\lheart{\cat{E}} \to \lheart{\cat{F}} \quad \text{and} \quad \lheart[b]{\cat{E}} \to \lheart[b]{\cat{F}}
\end{equation*}
of exact categories, respectively. Hence the desired functor is given by $\cat{F} \to \lheart[*]{\cat{F}}$ followed by the quasi-inverse of the above equivalence. 

The uniqueness of this functor follows again from \cref{rcat_same_heart}.
\end{proof}

Replacing \cref{heart_resolving,rcat_same_heart} by \cref{Rcat_resolving,resolving_same_rclosure}, the same proof yields:

\begin{proposition} \label{Rcat_completion}
Let $\cat{E}$ be a small exact category. Then $\Rclosure{\cat{E}}$ is a resolving completion of $\cat{E}$ and $\Rclosure[b]{\cat{E}}$ is a finitely resolving completion of $\cat{E}$. \qed
\end{proposition}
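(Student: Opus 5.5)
The plan is to mirror the proof of \cref{LH_completion} step by step, with \cref{Rcat_resolving} in place of \cref{heart_resolving} and \cref{resolving_same_rclosure} in place of \cref{rcat_same_heart}.

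The first step is to record that $\cat{E}$ is resolving in $\Rclosure{\cat{E}}$ and finitely resolving in $\Rclosure[b]{\cat{E}}$. This is \cref{Rcat_resolving}. The embedding $\cat{E} \to \Rclosure[*]{\cat{E}}$ is the Yoneda functor followed by the localization. It is a fully exact embedding by \cite[Proposition~5]{Rump:2020}, since $\Rclosure[*]{\cat{E}}$ carries the fully exact structure from $\Qlcat{\cat{E}}$.

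Next, let $\cat{E} \to \cat{F}$ be a fully exact embedding with $\cat{E}$ (finitely) resolving in $\cat{F}$. We may take $\cat{F}$ small, since the statement only makes sense in that setting. By \cref{Rcat_resolving} applied to $\cat{F}$, there is a fully exact embedding $\cat{F} \to \Rclosure[*]{\cat{F}}$. By \cref{resolving_same_rclosure}, the restriction $\bar{r}$ gives an equivalence of exact categories $\Rclosure[*]{\cat{F}} \to \Rclosure[*]{\cat{E}}$. We define the desired functor as the composite
\begin{equation*}
\cat{F} \to \Rclosure[*]{\cat{F}} \xrightarrow{\bar{r}} \Rclosure[*]{\cat{E}}\,.
\end{equation*}
Since the second map is an equivalence of exact categories, the composite is a fully exact embedding, at least up to replacing the image by an isomorphic copy to get injectivity on objects.

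We must also check that this composite extends $\cat{E} \to \Rclosure[*]{\cat{E}}$. For $M \in \cat{E}$ we have $r(\yoneda[\cat{F}]{M}) = \yoneda[\cat{E}]{M}$, as noted in the proof of \cref{resolving_same_Qlcat}. So the triangle commutes up to natural isomorphism.

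Uniqueness is the step I expect to be the main obstacle. Let $i, i' \colon \cat{F} \to \Rclosure[*]{\cat{E}}$ be two fully exact embeddings extending the same embedding of $\cat{E}$. Every object of $\cat{F}$ has a presentation by an $\cat{F}$-conflation $K \to P \to M$ with $P \in \cat{E}$ and $K \in \cat{F}$. Iterating, $M$ is the cokernel of $P^{-1} \to P^0$ with $P^{-1}, P^0 \in \cat{E}$, using that deflations compose. Since $i$ and $i'$ are exact and agree on $\cat{E}$, both send $M$ to the cokernel of the same morphism $P^{-1} \to P^0$ in $\Rclosure[*]{\cat{E}}$, because deflations there are cokernels of their kernels. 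This gives a canonical isomorphism $i(M) \cong i'(M)$. Naturality comes from lifting morphisms in $\cat{F}$ to morphisms of such presentations, which is possible because $P^0$ is projective relative to lifting along presentations only up to deflation. Concretely, one pulls back along an $\cat{F}$-deflation from $\cat{E}$, as in \cref{step:lift_coker_to_E_mor}, and uses that precomposition with a deflation is faithful.

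Alternatively, one can argue as the paper does for \cref{LH_completion}: any such $i$ makes $\cat{E} \subseteq \cat{F} \subseteq \Rclosure[*]{\cat{E}}$ resolving. Then \cref{resolving_same_rclosure} together with \cref{Rcat_idempotent} identifies $i$ with the composite above up to natural isomorphism.
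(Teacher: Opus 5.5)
Your proposal is correct and is essentially the paper's proof: \cref{Rcat_resolving} gives the (finitely) resolving embedding, and the extension is $\cat{F} \to \Rclosure[*]{\cat{F}}$ followed by the equivalence $\bar{r}$ of \cref{resolving_same_rclosure}, which already goes to $\Rclosure[*]{\cat{E}}$, so no quasi-inverse is needed. For uniqueness, your direct argument is a valid and more explicit replacement for the paper's one-line appeal to \cref{resolving_same_rclosure}: take $i(M) \cong i'(M)$ to be the unique map compatible with the epimorphisms $i(P^0) \to i(M)$ and $i'(P^0) \to i'(M)$, and get independence of choices and naturality by refining covers via a pullback and an $\cat{E}$-cover; your alternative argument is exactly the paper's.
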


\begin{theorem}\label{Rcat_equals_LH}
Let $\cat{E}$ be a small exact category. Then there is an equivalence of exact categories $\lheart{\cat{E}} \to \Rclosure{\cat{E}}$ that restricts an equivalence $\lheart[b]{\cat{E}} \to \Rclosure[b]{\cat{E}}$ and to the identity on $\cat{E}$. The equivalence is given by $X \mapsto \coker(\yoneda{d_X^{-1}})$. 
\end{theorem}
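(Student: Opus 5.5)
We will construct the functor explicitly rather than invoke the abstract uniqueness of resolving completions, since the statement asks for the concrete formula $X \mapsto \coker(\yoneda{d_X^{-1}})$.

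\emph{Definition on objects and morphisms.} Every object of $\lheart{\cat{E}}$ is isomorphic in $\dcat{\cat{E}}$ to an $\ext{\cat{E}}$-resolution: truncate at $\ker(d^0)$ using $\cat{E}$-acyclicity in positive degrees, as in the proof of \cref{ext_acyc_t_pair}, with weak idempotent completeness making the kernel available. Conversely, by definition every $\ext{\cat{E}}$-resolution lies in $\lheart{\cat{E}}$. For an $\ext{\cat{E}}$-resolution $X$ set $\Phi(X) = \coker(\yoneda{d_X^{-1}})$, regarded in $\Rclosure{\cat{E}}$.

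For a morphism $X \to Y$ in $\dcat{\cat{E}}$ between $\ext{\cat{E}}$-resolutions, first represent it by a roof. By \cref{ext_acyc_t_pair} together with the triangle given by brutal truncation, $\Hom_{\dcat{\cat{E}}}(X,Y)$ is computed by degree-$0$ data modulo homotopy. Concretely, a morphism can be represented as $X \xleftarrow{\sim} W \to Y$ with $g \colon W \to X$ an $\cat{E}$-quasi-isomorphism between complexes concentrated in non-positive degrees. Such a $g$ induces an isomorphism $\coker(\yoneda{d_W^{-1}}) \to \coker(\yoneda{d_X^{-1}})$ in $\Qlcat{\cat{E}}$. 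This is the key point: by \cref{ext_acyc_in_loc_acyc}, $\cat{E}$-acyclic sequences become acyclic after localization, so the cone of $g$ becomes acyclic and the induced map on $H^0$ has effaceable kernel and cokernel. This yields a well-defined functor $\Phi \colon \lheart{\cat{E}} \to \Rclosure{\cat{E}}$. Well-definedness uses that null-homotopic maps induce zero on cokernels, together with the uniqueness remark after \cref{ind_mor_nullhptc}.

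\emph{Equivalence.} Essential surjectivity is immediate from the definition of $\Pmod{\cat{E}}$. For full faithfulness, I would identify both Hom groups with the same quotient.
\begin{itemize}
\item A morphism $\coker(\yoneda{d_X^{-1}}) \to \coker(\yoneda{d_Y^{-1}})$ in $\Qlcat{\cat{E}}$ is a roof in $\mod{\cat{E}}$ whose wrong-way arrow lies in $S_{\eff{\cat{E}}}$.
\item One may first precompose with a deflation from a representable, using \cref{generating_Rmod}, and then lift to chain data $X^0 \to Y^0$ after replacing $X$ by a quasi-isomorphic resolution. Here \cref{induced_mor_extresn} extends the degree-$0$ and degree-$(-1)$ data to a chain map.
\item \cref{ind_mor_nullhptc} shows that maps which vanish on $H^0$ in the localization are zero in $\dcat{\cat{E}}$.
\end{itemize}

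\emph{Main obstacle.} The main obstacle is handling elements of $S_{\eff{\cat{E}}}$ and checking that they correspond exactly to $\cat{E}$-quasi-isomorphisms of resolutions. I would argue this by showing that a map of Yoneda cokernels with effaceable kernel and cokernel lifts, after the horseshoe-type replacement of \cref{horseshoe}, to a chain map whose cone is left $\ext{\cat{E}}$-acyclic in all degrees and has $H^0$ effaceable. Such a cone is then $\cat{E}$-acyclic; to see this I would use \cref{Extacyc_cone_then}, and the fact that an effaceable $\coker(\yoneda{d})$ forces $d$ to be a deflation up to a split summand.

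\emph{Exactness and the bounded case.} For exactness, triangles $X \to Y \to Z \to \susp X$ with all terms in $\lheart{\cat{E}}$ go, via the long exact sequence of the localized $H^0$ (acyclicity by \cref{ext_acyc_in_loc_acyc}) and $\Hom(\lheart{\cat{E}},\susp^{-1}\lheart{\cat{E}})=0$, to short exact sequences in $\Mod{\cat{E}}/\Eff{\cat{E}}$. These are $\Rclosure{\cat{E}}$-conflations. For the converse, I would use the horseshoe \cref{horseshoe} to realize conflations as cones.

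Bounded resolutions correspond to bounded resolutions, so $\Phi$ restricts to $\lheart[b]{\cat{E}} \to \Rclosure[b]{\cat{E}}$. For $M \in \cat{E}$ the image is $\yoneda{M}$, so $\Phi$ restricts to the identity on $\cat{E}$.
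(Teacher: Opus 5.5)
Your route differs from the paper's, and its skeleton is sound. The paper never constructs the functor by hand. It first proves separately that $\lheart{\cat{E}}$ and $\Rclosure{\cat{E}}$ are resolving completions:
\cref{LH_completion} rests on the Henrard--van Roosmalen derived equivalence via \cref{rcat_same_heart}, and \cref{Rcat_completion} rests on the restriction functor $\bar r$ of \cref{resolving_same_Qlcat,resolving_same_rclosure}. It then gets the equivalence formally from the uniqueness clause of the universal property: $\lheart{\cat{E}}\to\Rclosure{\cat{E}}\to\lheart{\cat{E}}$ extends $\cat{E}\to\lheart{\cat{E}}$, hence is isomorphic to the identity, and symmetrically. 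That costs two lines once the completion results are available, but it does not by itself identify the functor with $X\mapsto\coker(\yoneda{d_X^{-1}})$. Your $\Phi$ is $H^0$ of the termwise Yoneda complex, computed in $\Mod{\cat{E}}/\Eff{\cat{E}}$. It makes the formula explicit and does not need the derived equivalence, at the price of checking full faithfulness and exactness directly. The pieces you list do fit together:
\begin{enumerate}
\item \emph{Well-definedness} is cleanest via the Verdier quotient. This $H^0$ is cohomological on $\kcat{\cat{A}}$ and kills $\operatorname{thick}(\Ac{\cat{E}})$ by \cref{ext_acyc_in_loc_acyc}.
\item \emph{Fullness.} Write $\sF_X=\coker(\yoneda{d_X^{-1}})$ and take the roof $\sF_X\xleftarrow{s}\sG\to\sF_Y$ inside $\Pmod{\cat{E}}$; morphisms of $\Rclosure{\cat{E}}$ are such roofs by \cref{eff_defl_percolating}. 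Lift both legs to chain maps out of a single $\ext{\cat{E}}$-resolution of $\sG$, using projectivity of representables in $\Mod{\cat{E}}$ and \cref{induced_mor_extresn}. The long exact sequence in $\Mod{\cat{E}}/\Eff{\cat{E}}$ and \cref{ext_acyc_in_loc_acyc} then show that the cone of the lift of $s$ is concentrated in non-positive degrees and left $\ext{\cat{E}}$-acyclic in every degree.
\item \emph{Faithfulness} is local effaceability, which needs one deflation replacement in degree $0$, followed by \cref{ind_mor_nullhptc}.
\end{enumerate}

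Several points need tightening.
\begin{enumerate}
\item The claim that $\Hom_{\dcat{\cat{E}}}(X,Y)$ is ``degree-$0$ data modulo homotopy'' is false. For finitely generated abelian groups, take $X=\mathbb{Z}/2$ and $Y=(\mathbb{Z}\xrightarrow{2}\mathbb{Z})$: then $\Hom_{\kcat{\cat{A}}}(X,Y)=0\neq\Hom_{\dcat{\cat{E}}}(X,Y)$. Brutal truncation only yields an injection $\Hom(X,Y)\hookrightarrow\Hom(X^0,Y)$. Fortunately you never use the claim, since you pass to roofs.
\item The cone of the lift of $s$ need not be literally $\cat{E}$-acyclic. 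What you need is that it vanishes in $\dcat{\cat{E}}$, and this is immediate from \cref{ext_acyc_t_pair}: the cone lies in $\cat{U}(\cat{E})\cap\susp^{-1}\cat{V}_\ell(\cat{E})$, so its identity is zero. Your split-summand argument also works, but only after an infinite iteration.
\item \cref{horseshoe} needs a short exact sequence in $\Mod{\cat{E}}$. An $\Rclosure{\cat{E}}$-conflation is only exact in $\Mod{\cat{E}}/\Eff{\cat{E}}$ and need not lift (cf.\ \cref{difference_exact_on_Qlcat}), so the horseshoe cannot be used to reflect conflations as you propose. Argue instead as follows, once full faithfulness is known:
\begin{itemize}
\item Lift $\sF\to\sG$ to a chain map $Y\to Z$ and put $X'=\susp^{-1}\cone(Y\to Z)$.
\item The long exact sequence shows that $X'$ is left $\ext{\cat{E}}$-acyclic in all degrees $\neq 0$.
\item After adding a contractible summand as in \cref{extending_mor_to_ext_resn}, $d^0_{X'}$ becomes a deflation.
\item Hence $X'\in\lheart{\cat{E}}$ and $\Phi(X')\cong\sE$.
\end{itemize}
\item Weak idempotent completeness is neither assumed nor needed. $\cat{E}$-acyclicity in degree $1$ factors $d^0$ as a deflation followed by an inflation, so $\ker(d^0)$ exists.
\end{enumerate}
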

\begin{proof}
Since $\Rclosure{\cat{E}}$ and $\lheart{\cat{E}}$ are resolving completions of $\cat{E}$, there exists a commutative diagram of fully exact embeddings
\begin{equation*}
\begin{tikzcd}
& \cat{E} \ar[dl,tail] \ar[d,tail] \ar[dr,tail] \\
\lheart{\cat{E}} \ar[r,tail] & \Rclosure{\cat{E}} \ar[r,tail] & \lheart{\cat{E}}
\end{tikzcd}
\end{equation*}
where the composition in the bottom row is an equivalence. The same holds when switching $\lheart{\cat{E}}$ and $\Rclosure{\cat{E}}$. 
\end{proof}

We have already seen some examples of the finitely resolving completion in \cref{sec:dcat}. In some settings the viewpoint of the functor category is more convenient.

\begin{example} \label{rclosure_split_exact}
Let $\cat{A}$ be an additive category equipped with the split exact structure. Then the category of effaceable functors is zero and the Yoneda embedding 
\[
\mathbb{Y} \colon \cat{A} \to \Pmod{\cat{A}}
\]
is the resolving completion. The Yoneda embedding 
\[
\mathbb{Y} \colon \cat{A} \to \Pmod[b]{\cat{A}}
\]
is the finitely resolving completion.
\end{example}

\begin{example}
Let $\cat{E}$ be an exact category with enough projectives and let $\cat{P}$ be its subcategory of projective objects. Then $\cat{P} \subseteq \cat{E}$ is a resolving subcategory when equipped with the split exact structure. Then the restricted Yoneda functor 
\[
\bar{\mathbb{Y}} \colon \cat{E} \to \Pmod{\cat{P}} \,, \quad M \mapsto \left.\Hom_{\cat{E}}(-,M)\right|_{\cat{P}} 
\]
defines a resolving completion of $\cat{E}$. Similarly, if $\cat{E}$ has enough projectives and every object has a finite projective resolution then 
\[
\bar{\mathbb{Y}} \colon \cat{E} \to \Pmod[b]{\cat{P}} \,, \quad M \mapsto \left.\Hom_{\cat{E}}(-,M)\right|_{\cat{P}} 
\]
defines the finite resolving completion.
This follows, from the previous example and Proposition  \ref{resolving_same_rclosure}.
\end{example}

\begin{example} \label{ic-wic}
Let $\cat{E}\to \cat{E}^{ic}$ be the idempotent completion of an exact category; cf.\ \cite[Section~6]{Buehler:2010}. This is a fully exact embedding of a resolving subcategory and therefore, by \cref{Rcat_idempotent} the resolving completion of $\cat{E}$ and $\cat{E}^{ic}$ can be identified and is idempotent complete itself.

Let $\cat{E} \to \cat{E}^{wic}$ be the weak idempotent completion of an exact category; cf.\ \cite[Remark~7.8]{Buehler:2010}. This is a fully exact embedding of a finitely resolving subcategory. Therefore, the finitely resolving completion of $\cat{E}$ and $\cat{E}^{wic}$ can be identified and is weakly idempotent complete itself. 
\end{example}

\begin{example}
An example for 
$\lheart[b]{\cat{E}}\neq \rheart[b]{\lheart[b]{\cat{E}}}$:
Let $\Lambda$ be the path algebra of the quiver 
\[
\begin{tikzcd}
1 \arrow[r] & 2 \arrow[r, shift left=5] \arrow[r, shift left=4]\arrow[r, "\vdots"', shift left=3] 
& 3
\end{tikzcd}
\]
with countably infinitely many arrows from $2$ to $3$ module all paths of length $2$ over a field. Let $P(1),P(2)$ and $P(3)$ be the indecomposable projective quiver representations with simple top $S(1)$, $S(2)$ and $S(3)$, respectively. We consider $\cat{P} =\add(P(1) \oplus P(2) \oplus P(3))$ be subcategory of finitely generated projective quiver representations of $\mod{\Lambda}$.

For $\lheart[b]{\cat{P}} \neq \rheart[b]{\lheart[b]{\cat{P}}}$ it is enough to find an epimorphism in $\lheart[b]{\cat{P}}$ that is not an $\lheart[b]{\cat{P}}$-deflation by the dual of \cref{char_lheart_mono}. We use the identification $\lheart[b]{\cat{P}} = \Pmod[b]{\cat{P}}$. Let $a \colon P(2) \to P(1)$ be the composition of the canonical morphisms $P(2) \to S(2) \to P(1)$. We show that $a$ satisfies the desired conditions. In the category $\mod{\Lambda}$, we have $\coker(a) = S(1)$ and $\ker(a) \cong S(3)^{\oplus \BN}$. This implies that $S(1)$ is not in $\Pmod[b]{\cat{P}}$. Next, we observe that $S(1)$ is an injective object, so every object $M$ with a non-zero morphism $S(1) \to M$ must have $S(1)$ as a summand. But objects in $\Pmod[b]{\cat{P}}$ have no summand $S(1)$, therefore $a$ is an epimorphism in $\Pmod[b]{\cat{P}}$. It is not a $\Pmod[b]{\cat{P}}$-deflation because it has no kernel in $\Pmod[b]{\cat{P}}$. 
\end{example}

\subsection{The n-resolving completion} \label{n_res_completion}

The constructions of the finite resolving completion $\lheart[b]{\cat{E}}$ and $\Rclosure[b]{\cat{E}}$ can be further restricted to those resolutions that have a universal bound.

We define
\begin{equation*}
\cat{V}_\ell^n(\cat{E}) \coloneqq \cat{V}_\ell(\cat{E}) \cap \susp^n \cat{V}(\cat{E}) \quad \text{and} \quad \cat{U}_r^n(\cat{E}) \coloneqq \cat{U}_r(\cat{E}) \cap \susp^{-n} \cat{U}(\cat{E})\,.
\end{equation*}
We denote by $\lheart[n]{\cat{E}}$ the category $\cat{U}(\cat{E}) \cap \cat{V}_\ell^n(\cat{E})$ equipped with the admissible exact structure; and by $\rheart[n]{\cat{E}}$ the category $\cat{U}_r^n(\cat{E}) \cap \cat{V}(\cat{E})$ equipped with the admissible exact structure.

For any $n \geq 0$, the category $\lheart[n]{\cat{E}}$ contains $\cat{E}$ as a finitely resolving subcategory. Moreover, $\cat{E}$ is a \emph{$n$-resolving} subcategory: A fully exact subcategory $\cat{E}$ of an exact category $\cat{F}$ is called $n$-resolving in $\cat{F}$ if $\cat{E}$ is resolving in $\cat{F}$, and for any $M$ in $\cat{F}$, the induced $\cat{F}$-acyclic sequence in \cref{eq:resolving_resn} is of length $n$; that is $X^i = 0$ for $i < n$. By \cite[Theorem~4.1]{Henrard/vanRoosmalen:2020c}, there is a derived equivalence \begin{equation*}
\dcat{\cat{E}} \cong \dcat{\lheart[n]{\cat{E}}}
\end{equation*}
for all $n\geq1$.

Similarly, we can also specialize the construction of $\Rclosure[b]{\cat{E}}$: We define $\Pmod[n]{\cat{E}}$ as the subcategory of $\Pmod[]{\cat{E}}$ which consist of functors with $\ext{\cat{E}}$-resolution $X$ with $X^i = 0$ for $i < -n$. Analogously to \cref{eff_defl_percolating,loc_in_Qlcat_extclsd}, $\eff{\cat{E}}$ is a deflation-percolating subcategory of $\Pmod[n]{\cat{E}}$ for $n\geq 2$ and the localization $\Pmod[n]{\cat{E}}/\eff{\cat{E}}$ is extension-closed in $\Qlcat{\cat{E}}$. We therefore define categories $\Rclosure[n]{\cat{E}}$ as the category $\Pmod[n]{\cat{E}}/\eff{E}$ equipped with the fully exact structure in $\Qlcat{\cat{E}}$ for all $n\geq 2$. 

For $n = 1$, we cannot localize $\Pmod[1]{\cat{E}}$ at the $\cat{E}$-effaceable functors, as the $\cat{E}$-effaceable functors are not contained in $\Pmod[1]{\cat{E}}$. To rectify this, we replace $\Pmod[1]{\cat{E}}$ by the full subcategory $\Pmodhat[1]{\cat{E}}$ of $\mod{\cat{E}}$ consisting of functors of the form $\sF=\coker{\yoneda[]{f}}$, where $f$ is left $\cat{E}$-admissible; that is $f$ factors as an $\cat{E}$-deflation followed by a monomorphism. One can show that $\eff{E}$ is deflation-percolating in this larger category, and the localization is extension-closed in $\Qlcat{\cat{E}}$. We denote by $\Rclosure[1]{\cat{E}}$ category $\Pmodhat[1]{\cat{E}}/\eff{\cat{E}}$ equipped with the fully exact structure in $\Qlcat{\cat{E}}$.

For all $n\geq1$, it holds that $\cat{E}$ is an $n$-resolving subcategory of $\Rclosure[n]{\cat{E}}$. Moreover, if we define the $n$-resolving completion analogously to \cref{universal_property}, then the proofs of \cref{LH_completion,Rcat_completion,Rcat_equals_LH} can be adjusted to show:

\begin{theorem}\label{Rcat_equals_LH_restriction}
Let $\cat{E}$ be a (small) exact category. Then $\lheart[n]{\cat{E}}$ and $\Rclosure[n]{\cat{E}}$ are $n$-resolving completions for all $n \geq 1$.

Moreover, there is an equivalence $\lheart[n]{\cat{E}}\cong \Rclosure[n]{\cat{E}}$ of exact categories for all $n\geq 1$.
\end{theorem}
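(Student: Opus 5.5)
We follow the template of \cref{LH_completion,Rcat_completion,Rcat_equals_LH}, keeping track of the length of resolutions throughout. The argument has three parts. First, $\cat{E}$ is $n$-resolving in $\lheart[n]{\cat{E}}$ and in $\Rclosure[n]{\cat{E}}$. Second, an $n$-resolving inclusion $\cat{E} \subseteq \cat{F}$ induces equivalences $\lheart[n]{\cat{E}} \simeq \lheart[n]{\cat{F}}$ and $\Rclosure[n]{\cat{F}} \simeq \Rclosure[n]{\cat{E}}$. Third, universality follows formally, and the equivalence $\lheart[n]{\cat{E}} \cong \Rclosure[n]{\cat{E}}$ comes from comparing two $n$-resolving completions.

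For $\lheart[n]{\cat{E}}$, an object is quasi-isomorphic to a complex $X$ concentrated in degrees $[-n,0]$ that is left $\ext{\cat{E}}$-acyclic in negative degrees. The brutal truncation argument of \cref{heart_resolving} then gives an $\lheart[n]{\cat{E}}$-acyclic sequence $0 \to X^{-n} \to \cdots \to X^0 \to X$. We must check that the intermediate truncations $X^{\leqslant i}$, suitably shifted, again lie in $\lheart[n]{\cat{E}}$; this holds because their length only decreases. For $\Rclosure[n]{\cat{E}}$, we use \cref{ext_acyc_in_loc_acyc} as in \cref{Rcat_resolving}. When $n=1$ we additionally note that $\coker(\yoneda{f})$, for $f$ a deflation followed by a monomorphism, is resolved in $\Qlcat{\cat{E}}$ by $0 \to \yoneda{K} \to \yoneda{M} \to \sF$. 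Here $K$ is the source of the monomorphism, and the deflation part becomes invertible modulo $\eff{\cat{E}}$.

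For the second part we use the equivalence $\dcat[-]{\cat{E}} \to \dcat[-]{\cat{F}}$ of \cite[Theorem~3.11]{Henrard/vanRoosmalen:2020c} exactly as in \cref{rcat_same_heart}. \Cref{resolving_ext_acyc} shows that left Ext-acyclicity in negative degrees is preserved. The only new point is that $\susp^{n}\cat{V}(\cat{E})$ corresponds to $\susp^n\cat{V}(\cat{F})$, and we verify it in both directions. In one direction, an $\cat{E}$-complex that is $\cat{E}$-acyclic below degree $-n$ is $\cat{F}$-acyclic there. In the other direction, an $\cat{F}$-complex of length $n$ is replaced, via the construction of \cref{resolving_ext_resn}, by an $\cat{E}$-complex. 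Since $\cat{E}$ is $n$-resolving, the kernel object $P^{-n}$ in that construction lies in $\cat{E}$, so the resulting $\cat{E}$-resolution can be cut off at degree $-n$. This is the analogue of \cref{finitely_resolving_ext_resn} with the uniform bound; we expect to use $\cat{E}$ deflation-closed and $n$-resolving to get $P^{-n}\in\cat{E}$. On the functor side, \cref{resolving_same_Qlcat} gives the ambient equivalence $\Qlcat{\cat{F}} \to \Qlcat{\cat{E}}$, and the same truncated resolutions show that $\bar r$ matches $\Pmod[n]{\cat{F}}/\eff{\cat{F}}$ with $\Pmod[n]{\cat{E}}/\eff{\cat{E}}$.

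With these two parts in hand, the proofs of \cref{LH_completion,Rcat_completion} go through verbatim, and \cref{Rcat_equals_LH} yields the equivalence given by $X \mapsto \coker(\yoneda{d_X^{-1}})$. The main obstacle is the case $n=1$ on the functor side. There $\Pmodhat[1]{\cat{E}}$ is not defined through Ext-resolutions, so we must check by hand three things: that $\eff{\cat{E}}$ is deflation-percolating in it (via \cref{deflclsd_loc_embedding}, after showing $\Pmodhat[1]{\cat{E}}$ is deflation-closed in $\mod{\cat{E}}$); that its localization is extension-closed in $\Qlcat{\cat{E}}$; and that its objects agree in $\Qlcat{\cat{E}}$ with cokernels of monomorphisms. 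The last point follows from \cref{mono_ext_acyc_hom_acyc}: the two-term complex given by a monomorphism is an $\ext{\cat{E}}$-resolution. For extension-closure we intend to adapt the steps of \cref{loc_in_Qlcat_extclsd}, replacing the horseshoe construction by the observation that extensions of cokernels of monomorphisms in $\Qlcat{\cat{E}}$ are again resolved by a two-term monomorphism after the pullback of \cref{loc_in_Qlcat_extclsd:step_defl}.
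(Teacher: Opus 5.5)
Your three-part plan matches what the paper has in mind; it only says the proofs of \cref{LH_completion,Rcat_completion,Rcat_equals_LH} ``can be adjusted''. However, the adjustment in your second part fails. For an $n$-resolving inclusion $\cat{E}\subseteq\cat{F}$ one does \emph{not} get $\lheart[n]{\cat{E}}\simeq\lheart[n]{\cat{F}}$, nor $\Rclosure[n]{\cat{F}}\simeq\Rclosure[n]{\cat{E}}$.

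Here is a counterexample. Take a finite-dimensional algebra $\Lambda$ of global dimension $2$, let $\cat{E}=\add(\Lambda)$, and let $\cat{F}\subseteq\lmod{\Lambda}$ be the modules of projective dimension at most $1$ with the fully exact structure. Then $\cat{E}$ is $1$-resolving in $\cat{F}$.
\begin{itemize}
\item $\lheart[1]{\cat{E}}$ consists of two-term complexes of projectives with injective differential, so $\lheart[1]{\cat{E}}\simeq\cat{F}$.
\item $\lheart[1]{\cat{F}}$ consists of all cokernels of injections between objects of $\cat{F}$, via $\dbcat{\cat{F}}\simeq\dbcat{\lmod{\Lambda}}$. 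Hence $\lheart[1]{\cat{F}}\simeq\lmod{\Lambda}$.
\end{itemize}
These are not equivalent as exact categories. If $0\to P_2\to P_1\to P_0\to M\to 0$ is a minimal projective resolution with $\pdim_\Lambda M=2$, then $P_1\to P_0$ is not admissible in $\cat{F}$, since its cokernel $M$ is not in $\cat{F}$. In an abelian category every morphism is admissible.

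The precise failing step is your claim ``since $\cat{E}$ is $n$-resolving, $P^{-n}\in\cat{E}$''. For $Y=(\Omega M\hookrightarrow P_0)\in\lheart[1]{\cat{F}}$, the construction of \cref{resolving_ext_resn} with $W^0=P_0$ gives $P^{-1}=\Omega M\notin\cat{E}$. The $n$-resolving hypothesis bounds the $\cat{E}$-resolution dimension of objects of $\cat{F}$. But $P^{-n}$ is an $n$-th $\cat{E}$-syzygy of an object of $\lheart[n]{\cat{F}}$, whose $\cat{E}$-resolution dimension is only bounded by $2n$. For the same reason the $n$-analogue of \cref{heart_idempotent} fails: here $\lheart[1]{\lheart[1]{\cat{E}}}\not\simeq\lheart[1]{\cat{E}}$. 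So the proofs of \cref{LH_completion,Rcat_completion} cannot be copied verbatim.

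The universal property does not need this equivalence, so the fix is as follows.
\begin{itemize}
\item Since $n$-resolving implies resolving, use the equivalence $\lheart{\cat{E}}\simeq\lheart{\cat{F}}$ of \cref{rcat_same_heart}.
\item Check that this equivalence sends $\cat{F}\subseteq\lheart{\cat{F}}$ into $\lheart[n]{\cat{E}}$. Each $M\in\cat{F}$ has an $\cat{F}$-acyclic resolution $0\to X^{-n}\to\cdots\to X^0\to M\to 0$ with $X^i\in\cat{E}$. By \cref{resolving_ext_acyc} the complex $X$ is left $\ext{\cat{E}}$-acyclic in negative degrees. In degree $-n$ this says $X^{-n}\to X^{-n+1}$ is a monomorphism, cf.\ \cref{mono_ext_acyc_hom_acyc}. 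So $X$ is an object of $\lheart[n]{\cat{E}}$.
\item The resulting embedding $\cat{F}\to\lheart[n]{\cat{E}}$ is fully exact. Indeed, $\lheart[n]{\cat{E}}$ is extension-closed in $\lheart{\cat{E}}$ with the same admissible conflations, and $\cat{F}$ is fully exact in $\lheart{\cat{F}}$.
\item Uniqueness is proved as before.
\end{itemize}
For $\Rclosure[n]{\cat{E}}$, argue the same way with \cref{resolving_same_rclosure}, using that $\Rclosure[n]{\cat{E}}$ is a fully exact subcategory of $\Rclosure{\cat{E}}$. For $n=1$ this uses your observation that $\coker(\yoneda{mp})\cong\coker(\yoneda{m})$ modulo $\eff{\cat{E}}$, together with the fact that $0\to K\xrightarrow{m}M$ is an $\ext{\cat{E}}$-resolution.

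With this replacement your first and third parts go through unchanged. The $n=1$ checks you list for $\Pmodhat[1]{\cat{E}}$ still have to be carried out, but the paper also only asserts them.
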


Previously, the 1-resolving and 2-resolving completions have been described in special cases when they provide t-structures; see \cref{t_str_iff_Ext_kernel}.

\begin{example}\label{example:quasi-abelian-heart}
Let $\cat{E}$ be a quasi-abelian category equipped with the maximal exact structure. Then $\mod{\cat{E}} = \Pmodhat[1]{\cat{E}}$. In particular, this yields
\begin{equation*}
\lheart{\cat{E}} \cong \Rclosure{\cat{E}} = \Rclosure[1]{\cat{E}} \cong \lheart[1]{\cat{E}}\,;
\end{equation*}
the latter is the full subcategory category of $\dcat{\cat{E}}$ consisting of two-term complexes concentrated in degrees $0,-1$ given by a monomorphism. This is the left heart as described in \cite[Section~1.2.2]{Schneiders:1999}.
\end{example}

\begin{example}\label{example:kernels-cat-heart}
Let $\cat{E}$ be an exact category with kernels. Then $\mod{\cat{E}} = \Pmod[2]{\cat{E}}$. In particularly, this yields 
\begin{equation*}
\lheart{\cat{E}} \cong \Rclosure{\cat{E}} = \Rclosure[2]{\cat{E}} \cong \lheart[2]{\cat{E}}\,;
\end{equation*}
the latter is the left heart as described in \cite[Corollary~3.10]{Henrard/Kvamme/vanRoosmalen/Wegner:2023}. It is the full subcategory of $\dcat[]{\cat{E}}$ consisting of three-term complexes of the form $\ker{f}\to L\xrightarrow{f}M$ concentrated in degrees $0,-1,-2$.
\end{example}

There are plenty of exact categories that do not have kernels. For example, the category of finitely presented modules over a ring that is not left coherent. For other categories, like the category of complete $\mathsf{LB}$-spaces, it is not known whether kernels exist; see \cite[Section~5]{Wegner:2025}. 

\begin{remark}
We expect that most of our results can be generalized to one-sided exact categories as introduced in \cite{Rump:2011,Bazzoni/Crivei:2013}, similarly to \cite{Henrard/Kvamme/vanRoosmalen/Wegner:2023}.
This will be done in the PhD thesis of the first author.
\end{remark}

\bibliographystyle{amsalpha}
\bibliography{The_Resolving_Completion_of_an_Exact_Category}

\end{document}